\documentclass[12pt]{amsart}
\usepackage{amssymb,amsmath,amsthm,amscd,enumerate,verbatim,extarrows}
\usepackage[colorlinks=true,linkcolor=black,citecolor=black]{hyperref}
\usepackage[all]{xy}
\usepackage{xcolor,cleveref}
\usepackage{fullpage}

\numberwithin{equation}{section}

\newtheorem{thm}{\bf Theorem}[section]
\newtheorem{lem}[thm]{\bf Lemma}

\newtheorem{cor}[thm]{\bf Corollary}
\newtheorem{prop}[thm]{\bf Proposition}

\theoremstyle{definition}
\newtheorem{dfn}[thm]{Definition}
\newtheorem{ex}[thm]{Example}
\newtheorem{rem}[thm]{Remark}
\newtheorem{quest}[thm]{Question}

\newtheorem*{thm*}{Theorem}
\newtheorem{lem*}[thm]{\bf Lemma}

\DeclareMathOperator{\Ann}{Ann}
\DeclareMathOperator{\Ass}{Ass}
\DeclareMathOperator{\chara}{char}
\DeclareMathOperator{\codepth}{codepth}
\DeclareMathOperator{\codim}{codim}

\DeclareMathOperator{\depth}{depth}
\DeclareMathOperator{\embdim}{embdim}
\DeclareMathOperator{\Ext}{Ext}

\DeclareMathOperator{\glind}{glld}
\DeclareMathOperator{\gr}{gr}
\DeclareMathOperator{\grade}{grade}

\DeclareMathOperator{\lind}{ld}
\DeclareMathOperator{\linp}{lin}
\DeclareMathOperator{\Min}{Min}

\DeclareMathOperator{\pd}{pd}

\DeclareMathOperator{\reg}{reg}
\DeclareMathOperator{\rk}{rank}

\DeclareMathOperator{\Tor}{Tor}

\newcommand{\wh}{\widehat}
\newcommand{\ot}{\otimes}

\newcommand{\FF}{{\mathbb F}}
 
\newcommand{\ZZ}{{\mathbb Z}}

\newcommand{\QQ}{{\mathbb Q}}

 \newcommand{\Fc}{{\mathcal F}}

\def\mm{{\mathfrak m}}

\def\nn{{\mathfrak n}}

\newcommand{\ovl}{\overline}

\newcommand{\wht}{\widehat}

\newcommand{\bsx}{\boldsymbol x}

\title[Koszul property and $g$-stretched rings]{Koszul property and finite linearity defect\\ over $g$-stretched local rings}

\author[D.V. Kien]{Do Van Kien}
\address{Department of Mathematics, Hanoi Pedagogical University 2, Xuan Hoa, Phuc Yen, Vinh Phuc, Vietnam}
\email{dovankien@hpu2.edu.vn}
\author[H.D. Nguyen]{Hop D. Nguyen}
\address{Institute of Mathematics, Vietnam Academy of Science and Technology, 18 Hoang Quoc Viet, 10307 Hanoi, Vietnam}
\email{ndhop@protonmail.com}
\subjclass[2010]{13F20, 14N05, 13A02}
\keywords{Stretched ring, $g$-stretched ring, Koszul ring, Cohen-Macaulay ring of almost minimal multiplicity, linearity defect, weak Koszul filtration.}
\begin{document}
\begin{abstract}
The linearity defect is a measure for the non-linearity of minimal free resolutions of modules over noetherian local rings. A tantalizing open question due to  Herzog and Iyengar asks whether a noetherian local ring $(R,\mm)$ is Koszul if its residue field $R/\mm$ has a finite linearity defect. We provide a positive answer to this question when $R$ is a Cohen--Macaulay local ring of almost minimal multiplicity with the residue field of characteristic zero. The proof depends on the study of noetherian local rings $(R,\mm)$ such that $\mm^2$ is a principal ideal, which we call \emph{$g$-stretched} local rings. The class of $g$-stretched local rings subsumes stretched artinian local rings studied by Sally, and generic artinian reductions of Cohen--Macaulay local rings of almost minimal multiplicity. An essential part in the proof of our main result is a complete characterization of one-dimensional complete $g$-stretched local rings. Beside partial progress on Herzog--Iyengar's question, another consequence of our study is a numerical characterization of all $g$-stretched Koszul rings, strengthening previous work of Avramov, Iyengar, and \c{S}ega. 
\end{abstract}
\maketitle

\section{Introduction}
\label{sect_intro}

Starting from classical results like Hilbert's syzygy theorem and Auslander--Buchsbaum--Serre's characterization of regular local rings, gaining an understanding of the structure of free resolutions over a local ring (or standard graded algebras over a field $k$)  has proved to be of fundamental importance. Among standard graded $k$-algebras, Koszul algebras have emerged as adequate playgrounds for studying graded free resolutions thanks to work of Avramov, Eisenbud, and Peeva \cite{AE92}, \cite{AP01}. Inspired by work of Eisenbud et al. \cite{EFS}, Herzog and Iyengar \cite{HIy} introduced a natural generalization of Koszul algebras to the local situation, via the notion of \emph{linearity defect} (see \Cref{sect_prelim} for details). Following \cite{HIy}, we say that a noetherian local ring (or a standard graded $k$-algebra) $(R,\mm,k)$ is \emph{Koszul} if $k$ has a linear free resolution over $\gr_\mm(R)$, equivalently, if $k$ has linearity defect $\lind_R k=0$. Combining results of \cite{AE92, AP01, HIy}, we have:
\begin{thm}[Avramov, Eisenbud, Herzog, Iyengar, Peeva]
\label{thm_Koszul_charact}
Let $(R,\mm)$ be a standard graded $k$-algebra. For a finitely generated graded $R$-module $M$, let $\reg_R M$ and $\lind_R M$ be the regularity and the linearity defect of $M$ over $R$, resp. The following are equivalent:
\begin{enumerate}[\quad \rm (1)]
\item $R$ is a Koszul algebra, i.e. $\reg_R k=0$;
\item $\reg_R k<\infty$;
\item $\reg_R M<\infty$ for any finitely generated graded $R$-module $M$;
\item $\lind_R k=0$;
\item $\lind_R k<\infty$.
\end{enumerate}
\end{thm}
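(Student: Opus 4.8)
The strategy is to arrange the five conditions into a single cycle, isolating the elementary implications and then importing one substantial result from each of \cite{AE92}, \cite{AP01}, and \cite{HIy}. Immediately $(1)\Rightarrow(2)$, $(3)\Rightarrow(2)$ (specialize to $M=k$), and $(4)\Rightarrow(5)$. The equivalence $(1)\Leftrightarrow(4)$ is essentially formal in the graded setting: since $R$ is standard graded we have $\mm^i=\bigoplus_{j\ge i}R_j$ for every $i$, hence $\gr_\mm(R)\cong R$ canonically, and so $\lind_R k$ is computed from the linear part of the minimal graded free resolution $F$ of $k$ over $R$; one checks (this is part of \cite{HIy}) that this linear part is acyclic precisely when every map of $F$ has entries of degree one, i.e. precisely when $\reg_R k=0$. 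It therefore remains to prove $(1)\Rightarrow(3)$, to close the loop through $(2)\Rightarrow(1)$, and to close the loop through $(5)\Rightarrow(1)$.

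For $(1)\Rightarrow(3)$ I would invoke the theorem of Avramov and Eisenbud \cite{AE92} that every finitely generated graded module over a Koszul algebra has finite regularity. One presents $R=Q/I$ with $Q$ a standard graded polynomial ring over $k$; Koszulness of $R$ means $k$ has a linear $R$-free resolution (the Priddy complex), and a change-of-rings argument relating homological data over $Q$ and over $R$ then yields the bound $\reg_R M\le\reg_Q M$, whose right-hand side is finite by Hilbert's syzygy theorem. Once $(2)\Rightarrow(1)$ is known, this also gives $(2)\Rightarrow(3)$.

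The core of the theorem is $(2)\Rightarrow(1)$, due to Avramov and Peeva \cite{AP01}, and this is the step I expect to be the main obstacle. Write $t_i=t^R_i(k):=\max\{j:\beta^R_{ij}(k)\ne 0\}$, so $\reg_R k=\sup_i(t_i-i)$, while $t_i\ge i$ always, and $R$ is Koszul exactly when equality holds for all $i$. Assume $\reg_R k<\infty$ but, for contradiction, that $R$ is not Koszul, and choose the least $i\ge 2$ with $t_i>i$. The idea is that once the minimal free resolution of $k$ leaves the diagonal it cannot return to it quickly enough: combining the coalgebra structure on $\Tor^R_\bullet(k,k)$ (the homology of the bar construction) with Backelin's theorem that the slopes $t_i/i$ are uniformly bounded, Avramov and Peeva produce nonzero classes in arbitrarily high homological degree whose internal degree exceeds the homological degree by an unbounded amount, so that $\reg_R k=\infty$, a contradiction. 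Hence $R$ is Koszul. The delicate point here is precisely the propagation step: a single off-diagonal class need not survive multiplication, so one must use the full (co)algebra structure together with Backelin's finiteness of the slopes to guarantee a sequence of nonzero Tor classes along which $t_i-i\to\infty$.

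It remains to treat $(5)\Rightarrow(1)$, the theorem of Herzog and Iyengar \cite{HIy} that over a standard graded algebra one has $\lind_R k\in\{0,\infty\}$. Set $d=\lind_R k$ and suppose $d<\infty$. Then the syzygy module $W=\Omega^d_R k$ has linearity defect zero, i.e. the linear part of its minimal graded free resolution is acyclic. Using again that $\gr_\mm(R)\cong R$, one plays the generating degrees of $W$ against the linear part of its resolution to conclude that $\reg_R k<\infty$ (equivalently, that $d=0$ outright), and then $(2)\Rightarrow(1)$ forces $R$ to be Koszul. This closes all the loops, so the five conditions are equivalent; in particular $(3)$ holds whenever any one of them does.
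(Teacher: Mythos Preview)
The paper does not supply a proof of this theorem: it is stated in the introduction as a known result, attributed in a single sentence to the combination of \cite{AE92}, \cite{AP01}, and \cite{HIy}, and then used as motivation. So there is no ``paper's own proof'' to compare against beyond that citation.

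Your outline correctly identifies which cited source is responsible for which nontrivial implication and assembles them in a coherent cycle: $(1)\Rightarrow(3)$ from Avramov--Eisenbud \cite{AE92}, $(2)\Rightarrow(1)$ from Avramov--Peeva \cite{AP01}, and $(5)\Rightarrow(1)$ from Herzog--Iyengar \cite{HIy}, with the remaining implications either trivial or formal in the standard graded setting (where $\gr_\mm R\cong R$). Your sketch of the Avramov--Peeva step is somewhat impressionistic---the actual argument in \cite{AP01} is more delicate in how it exploits the algebra structure on $\Tor^R_\bullet(k,k)$ to propagate an off-diagonal class---but you have flagged this as the hard step and named the correct tools. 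The sketch for $(5)\Rightarrow(1)$ is likewise accurate in spirit: in \cite{HIy} one shows that if $\lind_R k=d<\infty$ then $\Omega^d_R k$ is componentwise linear, and since $\Omega^d_R k$ is generated in a single degree this forces $\reg_R k<\infty$, reducing to $(2)\Rightarrow(1)$. Overall your synthesis matches what the paper's citation is pointing to.
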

For recent surveys related to free resolutions and Koszul algebras, we refer to \cite{Avr98,CDR,PS}. 

The finiteness of the linearity defect is known to have strong consequences: By \cite[Proposition 1.8]{HIy}, if $\lind_R M<\infty$, then the Poincar\'e series of $M$ is a rational function with denominator depending only on $R$, and independent of $M$. On the other hand, not much is known about the classification of noetherian local rings (or graded algebras) such that $\lind_R M<\infty$ for every finitely generated (graded) $R$-module $M$. Such rings are said to be \emph{absolutely Koszul}, and the reader may consult, for example, \cite{AMS20, CINR, Ng15, Ng18}, for some recent studies on them. In particular, the following natural question on the finiteness of the linearity defect remains open.
\begin{quest}[{Herzog and Iyengar \cite[Question 1.14, p.\,162]{HIy}}]
\label{quest_HIy}
Let $(R,\mm,k)$ be a noetherian local ring. Assume that $\lind_R k<\infty$. Does it follow that $\lind_R k=0$, i.e. $R$ is Koszul?
\end{quest}
Recall that if $(R,\mm, k)$ is a Cohen--Macaulay local ring of multiplicity $e(R)$ and embedding dimension $\mu(\mm):=\dim_k(\mm/\mm^2)$, then it always satisfies the Abhyankar  inequality \cite{Ab}: $e(R)\ge \codim(R) + 1$, where $\codim(R):=\mu(\mm)-\dim R$ denotes the \emph{embedding codimension} of $R$. For this reason,  $R$ is said to have \emph{minimal multiplicity} (respectively, \emph{almost minimal multiplicity}) if $e(R)= \codim(R) + 1$ (resp. $e(R)= \codim(R) + 2$).  

Beside the graded case mentioned in \Cref{thm_Koszul_charact}, \Cref{quest_HIy} has a positive answer in either of the following cases. Below, we let $r(R):=\dim_k \Ext^{\depth R}_R(k,R)$ be the \emph{type} of $R$.
\begin{enumerate}
\item $\mm^3=0$ \cite{Se}, or more generally $\mm^4=0$ \cite{AM17};
\item $R$ is a complete intersection \cite{My19,Se};
\item $R$ is Cohen--Macaulay and either has minimal multiplicity or has almost minimal multiplicity of type $r(R)\le \codim(R)-1$ \cite[Proposition 6.1.8]{CDR} (indeed, such a ring is Koszul);
\item $R$ is Cohen--Macaulay and Golod \cite{My19}.
\end{enumerate}

In this paper, we are interested in the Koszul property of $R$ and the finiteness of the linearity defect of the residue field $k$.  Our main result is a positive answer to \Cref{quest_HIy} for Cohen--Macaulay local rings of almost minimal multiplicity whose residue fields contain $\mathbb{Q}$.
\begin{thm}[= \Cref{thm_HerzogIyengar_AlmostMinMult}]
\label{thm_main_HerzogIyengar}
Let $(R,\mm,k)$ be a Cohen--Macaulay local ring of almost minimal multiplicity. Assume that $\chara k=0$. If $\lind_R k<\infty$, then the ring $R$ is Koszul.
\end{thm}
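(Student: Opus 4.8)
The plan is to cut $R$ down to a one-dimensional ring by slicing with general elements and then to play the hypothesis $\lind_R k<\infty$ against the classification of $g$-stretched rings. First I would note that, by item~(3) of the introduction, $R$ is already Koszul when it has minimal multiplicity, or almost minimal multiplicity with $r(R)\le\codim(R)-1$; since $r(R)\le\codim(R)$ for Cohen--Macaulay rings of almost minimal multiplicity, I may assume from now on that $R$ has almost minimal multiplicity of \emph{maximal type} $r(R)=\codim(R)$. Completing changes neither $\gr_\mm R$ nor $\lind_R k$, so I may also assume $R$ complete.

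\textbf{Reducing the dimension.} As $R$ is Cohen--Macaulay of almost minimal multiplicity, Sally's conjecture (proved by Rossi--Valla and by Wang) gives $\depth\gr_\mm R\ge\dim R-1$, so the initial forms of $\dim R-1$ sufficiently general elements of $\mm$ form a regular sequence on $\gr_\mm R$. Modding them out yields $\overline R$ still Cohen--Macaulay of almost minimal multiplicity and maximal type; it preserves the Koszul property in both directions (Koszulness is detected on $\gr_\mm R$, and one may adjoin or delete a linear nonzerodivisor to a standard graded algebra without affecting it); and --- the first delicate point --- it preserves finiteness of $\lind k$, using that modding out a regular element whose initial form is a nonzerodivisor on the associated graded ring does not raise the linearity defect of $k$. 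So I may assume $\dim R=1$; writing $h:=\codim R$, we have $\embdim R=h+1$ and $e(R)=h+2$.

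\textbf{Bringing in $g$-stretchedness.} Let $x\in\mm$ be general and $A:=R/(x)$, an Artinian $g$-stretched ring --- indeed one of Sally's stretched rings --- with $\gr_{\mm_A}A$ of Hilbert function $(1,h,1,\dots,1)$, the number of trailing $1$'s governed by the $\ini_\mm(x)$-torsion of $\gr_\mm R$. If $\gr_\mm R$ is Cohen--Macaulay, $\ini_\mm(x)$ is a nonzerodivisor, a length count with $e(R)=h+2$ forces $\mm_A^3=0$, and as $\lind_A k\le\lind_R k<\infty$ the known positive answer to Herzog--Iyengar's question for rings with $\mm^3=0$ (\c{S}ega) makes $A$, hence $\gr_\mm R$, hence $R$, Koszul. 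The substantive case is $\gr_\mm R$ \emph{not} Cohen--Macaulay, where the last parameter cannot be removed without breaking the comparison of associated graded rings; here I would analyse $R$ --- equivalently its stretched reduction --- directly, invoking the classification of one-dimensional complete $g$-stretched local rings and the numerical characterization of $g$-stretched Koszul rings (sharpening Avramov--Iyengar--\c{S}ega), and using $\chara k=0$ to normalize the defining data, in particular the symmetric bilinear form induced by multiplication on $\mm/\mm^2$ with values in $\mm^2$. The classification singles out the non-Koszul $g$-stretched rings as a short explicit list of \emph{extremal} families (maximal type with the most degenerate multiplication); the key step left is to prove each extremal family has $\lind k=\infty$. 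Granting that, $\lind_R k<\infty$ excludes the extremal families, and I would finish by reading off a weak Koszul filtration of $R$ from the explicit presentation --- or, where it applies, reducing once more to $\mm^3=0$ and quoting \c{S}ega --- so that $R$, hence $\gr_\mm R$, is Koszul.

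\textbf{Main obstacle.} I expect the crux to be that last step: proving the extremal, maximal-type $g$-stretched rings, which fail to be Koszul, nevertheless have $\lind_R k=\infty$. The Koszul direction is constructive (exhibit the filtration), whereas excluding finite linearity defect needs an honest lower bound on $\lind_R k$ --- for instance by locating a Golod-type or large-homomorphism obstruction in the minimal free resolution of $k$, or by controlling the asymptotic length of its linear strands --- and it is here that the fine structure from the one-dimensional $g$-stretched classification is indispensable. A secondary but nontrivial point, flagged above, is establishing that finiteness of $\lind k$ descends along a general hyperplane section whose initial form is a nonzerodivisor on $\gr_\mm R$.
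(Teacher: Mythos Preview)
Your reduction to maximal type $r(R)=\codim(R)$ via the Conca--De~Negri--Rossi result is a legitimate shortcut that the paper does not take; it lets you bypass the paper's longer Gorenstein analysis for $\codim(R)\ge 2$, since there $r(R)=1\le\codim(R)-1$ already forces Koszulness unconditionally. The dimension reduction via Sally's conjecture matches the paper.

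There are, however, two genuine problems in what remains. First, your appeal to ``the classification of one-dimensional complete $g$-stretched local rings'' is misplaced: a one-dimensional Cohen--Macaulay ring of almost minimal multiplicity is \emph{not} $g$-stretched unless it is a DVR (by \Cref{prop_1dimCM_gstretched} a $1$-dimensional $g$-stretched ring with $\depth=1$ is regular). What is $g$-stretched is the artinian reduction $A=R/(x)$, and the relevant structural input is Elias--Valla's presentation of $A$, not \Cref{thm_Matsuoka_intro}.

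Second, and more seriously, your proposed endgame --- ``reading off a weak Koszul filtration of $R$ \ldots\ so that $R$ is Koszul'' --- rests on a false implication: a weak Koszul filtration does \emph{not} imply Koszulness (the paper exhibits an explicit non-Koszul ring carrying one). The paper's mechanism is the reverse of what you describe. One lifts the weak Koszul filtration from $A$ to $R$ and uses it not to certify Koszulness, but to manufacture short exact sequences $0\to R/L_1\xrightarrow{\cdot y} R/L_2\to R/L_3\to 0$ whose left-hand maps are Tor-vanishing (\Cref{lem_weakfiltr_Tormap}). Assuming $2\le\lind_R k<\infty$, the bounds of \Cref{prop_ld_exactseq} propagate equalities and $\pm 1$ shifts among the finitely many numbers $\lind_R(R/L)$ until one reaches a contradiction of the form $n=n+1$. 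In your maximal-type case with $h=\codim(R)\ge 2$ the decisive relations in $R$ are $(x):y_2=\mm$ together with the splitting $\mm/(x)=(y_2,x)/(x)\oplus(y_1,y_3,\ldots,y_h,x)/(x)$; for $h=1$ one instead uses $(x):y=(x,y^s)$ and $(x):y^s=\mm$. The tools you propose for the obstacle --- Golod-type obstructions, linear-strand asymptotics --- are not what drives the argument.
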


The proof of \Cref{thm_main_HerzogIyengar} consists of three steps:
\begin{enumerate}
\item \textbf{Step 1}: Reduction to dimension at most 1. The main ingredient is the solution of Sally's conjecture due to  Rossi--Valla \cite{RV96} and independently Wang \cite{W97}.
\item \textbf{Step 2}: Solution to the case $\dim R=0$. The four main ingredients are the classification of $g$-stretched local rings of dimension 1, Elias--Valla's structure theorem for artinian stretched local rings, the theory of weak Koszul filtration, and  our previous work \cite{Ng15, NgV16} on the behavior of linearity defect along short exact sequences.
\item \textbf{Step 3}: Solution to the case $\dim R=1$. The proof uses the same strategy as the solution of Step 2, combined with reduction to the case of Step 2.
\end{enumerate}
In Step 1, we use the fact that if $R$ is a Cohen--Macaulay local ring of almost minimal multiplicity with $\dim R\ge 2$, then the associated graded ring has almost maximal depth: There is an inequality $\depth \gr_\mm(R)\ge \dim R-1$. This was conjectured by Sally and proved by Wang \cite{W97} and  Rossi--Valla \cite{RV96}. In particular, if moreover $k$ is infinite, then we can choose an $R$-regular element $x\in \mm$ such that $x+\mm^2\in \mm/\mm^2$ is $\gr_\mm(R)$-regular, and we can reduce the dimension by replacing $R$ by $R/(x)$. Therefore the main case of \Cref{thm_main_HerzogIyengar} is when $\dim R\le 1$.

Before going to the details of Steps 2 and 3, let us introduce $g$-stretched rings. Observe that $R$ is an artinian local ring of almost minimal multiplicity if and only if $\mm^3=0$ and the equality $\mu(\mm^2)=1$ holds\footnote{Indeed, an artinian ring $R$ has minimal multiplicity if and only if $\ell(R)=\mu(\mm)+2$. The chain $\ell(R)=\ell(R/\mm)+\ell(\mm/\mm^2)+\ell(\mm^2)=1+\mu(\mm)+\ell(\mm^2)$ translates the last equation to $\ell(\mm^2)=1$. Nakayama's lemma implies that this is equivalent to  $\mm^3=0$ and $\mu(\mm^2)=1$.}. More generally, let $(R,\mm, k)$ be a  Cohen--Macaulay local ring with infinite residue field, $Q$ be a parameter ideal of $R$ generated by elements in $\mm\setminus \mm^2$. If $R$ satisfies the inequality $e(R)\le \codim(R) + 2$, then the above observation implies that the ideal $(\mm/Q)^2$ of the artinian reduction $R/Q$ is principal. Based on the these observations, it is natural to study noetherian local ring whose square of the maximal ideal is principal. We call such rings {\it $g$-stretched}, where ``g" stands for ``generalized". Note that artinian $g$-stretched local rings are exactly the class of \emph{stretched} local rings considered by Sally \cite{Sal79}, and Elias--Valla \cite{ElV08}, among others.

Step 2 is accomplished by the following 
\begin{thm}[See \Cref{thm_HIy_gstretched_long}]
\label{thm_HIy_gstretched_short}
Let $(R,\mm,k)$ be a noetherian local ring. Assume that $R$ is a $g$-stretched ring and $\chara(k)=0$. Then $R$ is Koszul if and only if $\lind_R k<\infty$.
\end{thm}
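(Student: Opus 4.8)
The forward implication is trivial: if $R$ is Koszul then $\lind_R k = 0 < \infty$. The content is the converse, which I now sketch. One may assume throughout that $R$ is complete, since neither $\lind_R k$ nor the Koszul property changes under $\mm$-adic completion. A preliminary observation reduces matters to $\dim R \le 1$: for $i \ge 2$ the module $\mm^i/\mm^{i+1}$ is cyclic, being a quotient of $\mm^2 = (f)$, so $\dim_k \mm^i/\mm^{i+1} \le 1$; if this dimension vanishes for some $i \ge 2$ then Nakayama forces $\mm^i = 0$ and $R$ is artinian, and otherwise $\gr_\mm R$ has Hilbert function $(1, \mu(\mm), 1, 1, \dots)$, so $\dim R = \dim \gr_\mm R = 1$.

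Case $\dim R = 0$: here $R$ is a stretched artinian local ring in the sense of Sally. Since $\chara k = 0$, Elias--Valla's structure theorem presents $R$, up to analytic isomorphism, explicitly in terms of its Hilbert function $(1, h, 1, \dots, 1)$ and its type. I split into two subcases according to this normal form. If $R$ is a quadratic algebra --- equivalently, the higher-order defining relations contribute no minimal generator --- I assemble a weak Koszul filtration from the coordinate subideals appearing in the presentation and conclude that $R$ is Koszul by the filtration criterion. If instead $R$ is not quadratic (for example $k[[x]]/(x^{s+1})$ with $s \ge 2$, or $k[[x,y]]/(x^2, xy, y^3)$), then $\gr_\mm R$ is not quadratic, so $R$ is not Koszul, and I must prove $\lind_R k = \infty$. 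For this I extract from the presentation an explicit short exact sequence of $R$-modules --- typically one linking $\mm$, a colon submodule, and $k$, or a truncation of $\mm$ --- and apply the results of \cite{Ng15, NgV16} on the behavior of the linearity defect along short exact sequences, arranged so that an infinite linearity defect on one term is forced and then propagated to $k$.

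Case $\dim R = 1$: the essential input is the classification of one-dimensional complete $g$-stretched local rings, which determines such an $R$ up to isomorphism. When $R$ is Cohen--Macaulay I choose a suitable $R$-regular element $x \in \mm \setminus \mm^2$ (made possible by the shape of the Hilbert function and by $\chara k = 0$, and pinned down by the classification) and pass to the stretched artinian ring $\overline R := R/(x)$; since $x$ is a nonzerodivisor, $\lind_R k < \infty$ descends to $\lind_{\overline R} k < \infty$ by \cite{Ng15, NgV16}, so $\overline R$ is Koszul by the artinian case, and lifting a Koszul filtration (or the quadratic presentation) along $x$ shows that $R$ is Koszul. When $R$ is not Cohen--Macaulay the classification describes $R$ explicitly through its local cohomology module $H^0_\mm(R)$, and one runs the same dichotomy --- the quadratic case via a weak Koszul filtration, the non-quadratic case via an explicit short exact sequence forcing $\lind_R k = \infty$ --- directly over $R$.

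The principal obstacle is the non-Koszul subcase of the artinian step: one needs, uniformly over all non-Koszul stretched artinian rings, a module whose infinite linearity defect provably propagates to the residue field. Because $\lind$ is neither additive nor monotone along short exact sequences, the choice of sequence and the verification that the attendant change-of-rings comparisons are tight enough for \cite{Ng15, NgV16} to apply form the crux of the argument; the hypothesis $\chara k = 0$ is used precisely to secure the Elias--Valla normal form, which is what makes these sequences explicit.
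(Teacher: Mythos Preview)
Your overall architecture is right, but there is a genuine conceptual error and a missed insight. You claim that in the quadratic artinian case you ``assemble a weak Koszul filtration \ldots and conclude that $R$ is Koszul by the filtration criterion,'' but a weak Koszul filtration does \emph{not} imply Koszulness: the paper exhibits the non-Koszul ring $k[[x,y]]/(xy,\,x^3-y^2)$ carrying a weak Koszul filtration. More importantly, the paper's dimension-zero split is Gorenstein versus non-Gorenstein (not quadratic versus non-quadratic), and the weak Koszul filtration is the engine of the \emph{Gorenstein non-Koszul} subcase, not of the Koszul one. The surprise is that the Elias--Valla normal form yields a weak Koszul filtration \emph{even when $R$ is not Koszul}, and the resulting Tor-vanishing maps (\Cref{lem_weakfiltr_Tormap}) let one bounce $\lind_R$ among the filtration ideals $R/(y_1,\ldots,y_i)$ and $R/(y_j,\ldots,y_h)$ until reaching a contradiction of the form $\lind_R k = \lind_R k + 1$. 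In the non-Gorenstein case the mechanism is different: Elias--Valla produces a direct summand $(y_2)\cong k$ of $\mm$ with $(0):y_2=\mm$, and comparing $\lind_R(y_2)$ with $\lind_R\mm$ gives the contradiction directly. Your vague ``explicit short exact sequence'' in the non-quadratic case does not capture either mechanism.

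Your dimension-one treatment is also off. By \Cref{thm_Matsuoka}, every complete one-dimensional $g$-stretched ring has the form $S/Q\nn$ with $Q$ generated by part of a regular system of parameters, and \Cref{lem_Koszulfiltr} shows such rings are \emph{always} Koszul via a genuine Koszul filtration; in the Cohen--Macaulay subcase $R$ is in fact a DVR (\Cref{prop_1dimCM_gstretched}). So the hypothesis $\lind_R k<\infty$ is not even needed in dimension one, there is no dichotomy to run, and the descent ``$\lind_R k<\infty \Rightarrow \lind_{R/(x)}k<\infty$'' that you invoke for a merely $R$-regular $x$ is both unjustified (one needs the initial form of $x$ to be $\gr_\mm R$-regular) and unnecessary.
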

This theorem says that \Cref{quest_HIy} has a positive answer for $g$-stretched rings whose residue fields contain $\QQ$. The proof of \Cref{thm_HIy_gstretched_short} has four main ingredients. Observe that Krull's principal ideal theorem implies that any $g$-stretched local ring has dimension at most 1. It is not hard to see that any Cohen--Macaulay 1-dimensional $g$-stretched local ring is regular, namely a discrete valuation ring; see \Cref{prop_1dimCM_gstretched}. The first ingredient in the proof of \Cref{thm_HIy_gstretched_short} is the following structure result for one-dimensional $g$-stretched rings, that are quotients of regular local rings. This result provides a positive answer to a question proposed to us by N. Matsuoka.

\begin{thm}[= \Cref{thm_Matsuoka}]
\label{thm_Matsuoka_intro}
Let $(S,\nn)$ be a regular local ring of dimension $d\ge 1$, and $I\subseteq \nn^2$ an ideal such that $S/I$ is a $g$-stretched ring of dimension 1. Then $I=Q\nn$, where $Q$ is generated by a regular sequence of $d-1$ elements in $\nn\setminus \nn^2$.
\end{thm}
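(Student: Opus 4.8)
The plan is to prove that $I=Q\nn$ for the \emph{specific} ideal $Q:=I:_S\nn^\infty$, and then to check that this $Q$ is generated by a regular sequence of $d-1$ elements of $\nn\setminus\nn^2$. Throughout write $R=S/I$, $\mm=\nn/I$ and $H:=H^0_\mm(R)$; since $R$ is $g$-stretched we fix $a\in\mm$ with $\mm^2=(a)$. First note that $H=(I:_S\nn^\infty)/I=Q/I$, so $R/H=S/Q$. As $\dim R=1$, the ring $R$ is not Artinian, hence $\mm^n\neq 0$ for all $n$, so $H$ is a proper ideal and $H\subseteq\mm$; moreover $\dim(R/H)=1$ and $\depth(R/H)\geq 1$ because $H^0_\mm(R/H)=0$, so $R/H$ is Cohen--Macaulay, and it is again $g$-stretched since $(\mm/H)^2$ is the image of $(a)$. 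By \Cref{prop_1dimCM_gstretched}, $R/H=S/Q$ is a discrete valuation ring.

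Next I would extract the regular sequence and prove the inclusion $I\subseteq Q\nn$. Since $S/Q$ is a regular local ring of dimension $1$ and $S$ is regular of dimension $d$, the ideal $Q$ is generated by part of a regular system of parameters of $S$: the maximal ideal of $S/Q$ is principal, hence $\dim_k(Q+\nn^2)/\nn^2=d-1$, so one may pick $y_1,\dots,y_{d-1}\in Q$ whose residues form a $k$-basis of $(Q+\nn^2)/\nn^2$. These lie in $\nn\setminus\nn^2$ and form part of a regular system of parameters, so $S/(y_1,\dots,y_{d-1})$ is itself a discrete valuation ring; the kernel of the surjection $S/(y_1,\dots,y_{d-1})\twoheadrightarrow S/Q$ is an ideal $K$ of that discrete valuation ring whose quotient ring has dimension $1$, which forces $K=0$ (a nonzero ideal of a discrete valuation ring has Artinian quotient), i.e. $Q=(y_1,\dots,y_{d-1})$. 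Now $I\subseteq Q$ trivially and $I\subseteq\nn^2$ by hypothesis; and $Q\cap\nn^2=Q\nn$, since if $f=\sum_i g_iy_i\in\nn^2$ then, passing to $\nn/\nn^2$ and using the linear independence of the residues of the $y_i$, one gets $g_i\in\nn$ for all $i$. Hence $I\subseteq Q\nn$.

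It then remains to prove the reverse inclusion $Q\nn\subseteq I$. Working in $R$ and using $I\subseteq Q\nn$, one has $\mm H=(\nn Q+I)/I=\nn Q/I$, so it suffices to show $\mm H=0$. This is where the principality of $\mm^2$ does the work. Because $H\subseteq\mm$ we get $\mm H\subseteq\mm^2\cap H$. I claim $\mm^2\cap H=aH$: the inclusion $\supseteq$ is clear, and if $h=ar\in H$ then, since $H$ is $\mm$-torsion and $\mm^{2n}=(a^n)$, we have $a^{n}h=a^{n+1}r=0$ for $n\gg0$, whence $r\in H$ and $h\in aH$. Therefore $\mm H\subseteq aH=\mm^2H\subseteq\mm H$, so $\mm H=\mm^2H$, and iterating, $\mm H=\mm^jH$ for every $j$. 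As $H$ has finite length, $\mm^jH=0$ for $j\gg0$, hence $\mm H=0$, and so $Q\nn\subseteq I$. This finishes the argument.

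The only delicate step is the last one, and there the key observation — which I expect to be the main obstacle to formulate cleanly — is that when $\mm^2$ is principal the torsion submodule $H^0_\mm(R)$ satisfies $\mm^2\cap H^0_\mm(R)=\mm^2\,H^0_\mm(R)$, so that the descending chain $\mm H\supseteq\mm^2H\supseteq\cdots$ is stationary from its very first term; once this is isolated, the remainder of that step is elementary. All other steps are routine facts about ideals of regular local rings and about the zeroth local cohomology. I would also point out that no hypothesis on the residue field (neither infiniteness nor $\chara k=0$) enters this proof.
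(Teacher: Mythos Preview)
Your proof is correct and takes a genuinely different, more direct route than the paper's.

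The paper proceeds by a double induction on $d=\dim S$ and noetherian induction on $S/I$, and leans on an auxiliary lemma (their Lemma~\ref{lem_forbidden_containment}) ruling out $\nn^2\subseteq Q_1\cap Q_2+(g)$ for two distinct parameter ideals $Q_1,Q_2$. The argument then passes through a minimal primary decomposition $I=Q_1\cap\cdots\cap Q_s\cap L$, uses the inductive hypothesis to force $s=1$, then shows $Q_1$ may be taken prime, and finally reduces $\dim S$ by modding out a suitable $z\in Q_1\setminus\nn^2$.

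Your argument avoids all of this. You identify $Q$ intrinsically as the saturation $I:_S\nn^\infty$, so that $Q/I=H^0_\mm(R)$, and invoke \Cref{prop_1dimCM_gstretched} on the Cohen--Macaulay quotient $R/H$ to see that $S/Q$ is a DVR; this immediately gives the regular-sequence description of $Q$. The inclusion $I\subseteq Q\nn$ is then the standard fact $Q\cap\nn^2=Q\nn$ for an ideal generated by part of a regular system of parameters (this is also Lemma~\ref{lem_regseq_intersect}(2) in the paper). The decisive new idea is your observation that principality of $\mm^2=(a)$ forces $\mm^2\cap H=aH$: if $ar\in H$ then $a^{N+1}r=0$ for some $N$, and since $(a^{N+1})=\mm^{2(N+1)}$ this gives $r\in H$. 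Combined with $H\subseteq\mm$, this yields $\mm H\subseteq\mm^2\cap H=aH\subseteq\mm^2 H$, so $\mm H=\mm^2 H$, and Nakayama (or finite length of $H$) gives $\mm H=0$, i.e.\ $Q\nn\subseteq I$.

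What each approach buys: the paper's inductive argument is more hands-on and exposes the primary decomposition of $I$ explicitly, but requires the somewhat technical Lemma~\ref{lem_forbidden_containment}. Your approach is shorter, structurally cleaner, singles out $Q$ canonically from the start, and isolates the role of $g$-stretchedness in a single clean identity $\mm^2\cap H^0_\mm(R)=\mm^2\,H^0_\mm(R)$. As you note, neither approach needs any hypothesis on the residue field.
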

Answering Matsuoka's question was a starting point for this work; our proof uses a double induction: one on $d$ and another noetherian induction on $S/I$. 

As an application of \Cref{thm_Matsuoka_intro}, we prove the following strengthening of a result due to Avramov, Iyengar and \c{S}ega \cite[Theorem 4.1]{AIS}. Note that Avramov, Iyengar and \c{S}ega only considered rings with $\mu(\mm^2)\le 1$ and $\mm^3=0$, while we consider more generally rings with $\mu(\mm^2)\le 1$.

\begin{thm}[= \Cref{thm_Koszul}]
Let $(R,\mm,k)$ be a $g$-stretched local ring. Then the following are equivalent:
\begin{enumerate}[\quad \rm (1)]
\item $R$ is Koszul;
\item Either $\dim R=1$, or $R$ is artinian, $\mm^3=0$, and $r(R) \le \codim(R)-1$ unless $\mm^2=0$.
\end{enumerate}
\end{thm}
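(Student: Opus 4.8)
The plan is first to reduce to the case where $R$ is complete: the dimension, the ideals $\mm^n$, the type $r(R)$ and the codimension are all unchanged under $\mm$-adic completion, and $R$ is Koszul if and only if $\wh R$ is, since $\gr_\mm R=\gr_{\wh\mm}\wh R$. One then argues separately according to $\dim R$, which by Krull's principal ideal theorem equals $0$ or $1$.

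For $(2)\Rightarrow(1)$ in the case $\dim R=1$: write $R=S/I$ with $(S,\nn)$ a complete regular local ring of dimension $d=\embdim R$ and $I\subseteq\nn^2$ (Cohen's structure theorem). By \Cref{thm_Matsuoka_intro}, $I=Q\nn$ with $Q=(y_1,\dots,y_{d-1})$ a regular sequence of elements of $\nn\setminus\nn^2$. A direct computation identifies $\mm^2/\mm^3$ with $\gr_2(S)/(L\cdot\gr_1(S))$, where $L\subseteq\gr_1(S)$ is the span of the leading forms $y_1^{*},\dots,y_{d-1}^{*}$; since $R$ is $g$-stretched this quotient is one-dimensional, which forces $\dim_k L=d-1$, i.e.\ $y_1^{*},\dots,y_{d-1}^{*}$ are linearly independent. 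Hence, after an automorphism of $S$, we may take $Q=(x_1,\dots,x_{d-1})$ for a regular system of parameters $x_1,\dots,x_d$. A direct check (using that $x_1,\dots,x_{d-1}$ extends to a regular system of parameters) then gives $\gr_\mm R=k[X_1,\dots,X_d]/(X_iX_j:1\le i\le d-1,\ 1\le j\le d)$, a polynomial ring modulo a quadratic monomial ideal, hence Koszul by Fr\"oberg's theorem. For $\dim R=0$: if $\mm^2=0$ then $\gr_\mm R=R$ is the trivial extension $k\ltimes(\mm/\mm^2)$, which is Koszul; and if $\mm^3=0$, $\mm^2\ne0$ and $r(R)\le\codim R-1$, this is the ``if'' direction of \cite[Theorem 4.1]{AIS}.

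For $(1)\Rightarrow(2)$, assume $R$ is Koszul. If $\dim R=1$ or $\mm^2=0$ there is nothing to prove, so suppose $R$ is artinian with $\mm^2\ne0$; it suffices to show $\mm^3=0$, since then $r(R)\le\codim R-1$ by the ``only if'' direction of \cite[Theorem 4.1]{AIS}. Suppose for contradiction that $\mm^3\ne0$. Being Koszul, $A:=\gr_\mm R$ is a Koszul $k$-algebra, hence quadratic: $A=\operatorname{Sym}(V)/U$ with $V=A_1$ and $U\subseteq\operatorname{Sym}^2V$; since $R$ is $g$-stretched with $\mm^2\ne0$ we have $\dim_kA_2=1$, so $U$ is a hyperplane in $\operatorname{Sym}^2V$. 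Nakayama gives $A_3\ne0$, while Macaulay's theorem applied to $\dim_kA_2=1$ gives $\dim_kA_3\le1$; hence $\dim_kA_3=1$. Write $A_2=k\bar w$. Since $A_3=\bar w\cdot V$ is one-dimensional, the map $V\to A_3$, $v\mapsto\bar wv$, has a kernel $V'$ with $\dim_kV'=\dim_kV-1$; fix $v_0\in V$ with $\bar wv_0\ne0$, so $A_3=k\,\bar wv_0$. For $v'\in V'$ and $v''\in V$: writing $\overline{v''v_0}=\lambda\bar w$ in $A_2$ we get $\overline{v'v''v_0}=v'\cdot\overline{v''v_0}=\lambda\,\bar wv'=0$ because $v'\in V'$; writing $\overline{v'v''}=\mu\bar w$ in $A_2$ we get $\overline{v'v''v_0}=\mu\,\bar wv_0$. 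Hence $\mu=0$, i.e.\ $v'v''\in U$. Thus $U\supseteq V'\cdot V$, and since both have dimension $\binom{\dim_kV+1}{2}-1$ we get $U=V'\cdot V$. But then $A\cong\operatorname{Sym}(V)/(V'\cdot V)$, whose radical is the prime ideal generated by $V'$, so $A$ has Krull dimension $1$ --- contradicting that $R$ is artinian. Therefore $\mm^3=0$, as needed.

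I expect the only genuinely hard ingredient to be \Cref{thm_Matsuoka_intro}, which is established separately; granting it, the one-dimensional part of $(2)\Rightarrow(1)$ is the short Hilbert-function argument above, and the artinian cases reduce to \cite[Theorem 4.1]{AIS} once the elementary lemma ``$\dim_kA_2=\dim_kA_3=1$ forces $U=V'\cdot V$'' has ruled out $\mm^3\ne0$. The points that require care are the straightening of $Q$ from the $g$-stretched hypothesis (so that $\gr_\mm R$ becomes a quadratic monomial quotient) and the correct two-way use of \cite[Theorem 4.1]{AIS} in the $\mm^3=0$ artinian case.
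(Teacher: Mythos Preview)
Your proof is correct and complete, but it diverges from the paper's argument in both directions.

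For $(2)\Rightarrow(1)$ in dimension one, the paper does not compute $\gr_\mm R$ at all: after writing $R=S/(x_1,\dots,x_{d-1})\nn$ it exhibits an explicit (local) Koszul filtration
\[
\Fc=\{(0),(\ovl{x_1}),(\ovl{x_1},\ovl{x_2}),\dots,(\ovl{x_1},\dots,\ovl{x_{d-1}}),\mm\}
\]
and verifies the axioms of \Cref{dfnKoszulfil} directly (their \Cref{lem_Koszulfiltr}). Your route---identifying $\gr_\mm R$ with $k[X_1,\dots,X_d]/(X_iX_j:\,i\le d-1)$ and invoking Fr\"oberg---is shorter and more transparent once one has the presentation. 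It is also worth noting that your extra step (forcing $\dim_k L=d-1$ from the $g$-stretched hypothesis) supplies a justification the paper simply asserts (``We may assume $\nn=(x_1,\dots,x_d)$'').

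For $(1)\Rightarrow(2)$, the paper's \Cref{lem_artinKoszul} reduces to the graded case and then runs an induction on $\mu(\mm)$: it uses the regularity bound $\reg_R(0:a)\le 1$ (from the Koszul hypothesis and the exact sequence $0\to(0:a)(-2)\to R(-2)\to\mm^2\to 0$) to see that $(0:a)$ is generated by linear forms, passes to $R/(0:a)$, and finishes by a direct computation. Your argument is entirely different and avoids both induction and regularity: you use quadraticity of $A=\gr_\mm R$, Macaulay's bound to get $\dim_kA_3\le 1$, and then the neat observation that the annihilator $V'=\ker(V\xrightarrow{\cdot\bar w}A_3)$ satisfies $V'\cdot V\subseteq U$, forcing $U=V'\cdot V$ by a dimension count and hence $\dim A=1$. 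This is a genuinely more elementary route, trading the module-theoretic regularity machinery for a single classical Hilbert-function inequality. The paper's approach, on the other hand, stays closer to the Koszul-filtration philosophy that pervades the rest of the article and does not appeal to Macaulay's theorem.
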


The second ingredient in the proof of \Cref{thm_HIy_gstretched_short} is a structure result for stretched local rings due to Elias and Valla \cite[Theorem 3.1]{ElV08}. Using the latter, we deduce the harder part of \Cref{thm_HIy_gstretched_short}, when $R$ is Gorenstein.

\begin{thm}[= \Cref{thm_HIy_artinGor}]
Let $(R,\mm,k)$ be a $g$-stretched artinian local ring.  Assume that $\chara(k)=0$. If $\lind_R k<\infty$ and $R$ is Gorenstein, then $R$ is a Koszul ring.
\end{thm}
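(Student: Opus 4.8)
The plan is to combine the numerical characterization of Koszulness in \Cref{thm_Koszul} with the known positive cases of \Cref{quest_HIy}, using Elias--Valla's structure theorem to pin $R$ down. Since $R$ is Gorenstein artinian, $r(R)=1$, so \Cref{thm_Koszul} says that $R$ is Koszul if and only if $\mm^{3}=0$ and, in addition, $\mm^{2}=0$ or $\codim R=\mu(\mm)\ge 2$. Assuming $\lind_R k<\infty$, I therefore have to rule out the two failures of this condition. If $\mm^{3}=0$, $\mm^{2}\ne 0$ and $\mu(\mm)=1$, then $R\cong k[[t]]/(t^{3})$; this is a complete intersection which is not Koszul (by \Cref{thm_Koszul}), so since \Cref{quest_HIy} has a positive answer for complete intersections \cite{My19,Se}, the hypothesis $\lind_R k<\infty$ would make $R$ Koszul --- a contradiction. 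The same argument disposes of $\mm^{3}\ne 0$ with $\mu(\mm)=1$, since then $R\cong k[[t]]/(t^{n})$ with $n\ge 4$, again a non-Koszul complete intersection. Thus the essential remaining case is $\mm^{3}\ne 0$ with $\mu(\mm)=h\ge 2$; let $s\ge 3$ denote the socle degree, so $\mm^{s}=\mathrm{soc}(R)\cong k$.

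Here I would invoke Elias--Valla's structure theorem \cite[Theorem 3.1]{ElV08}; this is the point at which $\chara k=0$ is used, to complete $R$ and change coordinates so as to diagonalize a quadratic form. It presents $R\cong k[[x_{1},\dots,x_{h}]]/I$ with $\mm^{2}=(x_{1}^{2})R$, $x_{1}x_{i}=0$ for $2\le i\le h$, and $x_{i}x_{j}\in\mathrm{soc}(R)=(x_{1}^{s})$ for $2\le i,j\le h$, the resulting quadratic form on $\langle x_{2},\dots,x_{h}\rangle$ being nondegenerate (forced by Gorensteinness). Factoring out the socle removes the only inhomogeneous relations, so $R/\mathrm{soc}(R)$ is (isomorphic to) the standard graded $k$-algebra $k[x_{1},\dots,x_{h}]/\bigl(x_{1}x_{i},\ x_{i}x_{j}\ (2\le i,j\le h),\ x_{1}^{s}\bigr)$, in which $x_{1}^{s}$ ($s\ge 3$) is a minimal generator of the defining ideal; hence $R/\mathrm{soc}(R)$ is not quadratic, so not Koszul, so $\lind_{R/\mathrm{soc}(R)}k=\infty$ by \Cref{thm_Koszul_charact}. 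On the other hand, from $\lind_R k<\infty$ and the short exact sequence $0\to\mathrm{soc}(R)\to R\to R/\mathrm{soc}(R)\to 0$ --- in which $R$ is free and $\mathrm{soc}(R)\cong k$ --- the results of \cite{Ng15,NgV16} on the linearity defect along short exact sequences give $\lind_R\bigl(R/\mathrm{soc}(R)\bigr)<\infty$. The plan is to transfer this last finiteness along the surjection $R\to R/\mathrm{soc}(R)$ --- which is a Golod homomorphism whose kernel $\mathrm{soc}(R)$ is annihilated by $\mm$ --- so as to conclude $\lind_{R/\mathrm{soc}(R)}k<\infty$, contradicting the preceding sentence; the weak Koszul filtration machinery of the paper should be what makes this transfer go through. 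It then follows that $\mm^{3}=0$, and \Cref{thm_Koszul} finishes.

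I expect the last step --- propagating $\lind=\infty$ from $R/\mathrm{soc}(R)$ back to $R$ --- to be the main obstacle, because the linearity defect is extremely unstable under change of rings: a Golod homomorphism by itself does \emph{not} transfer finiteness of $\lind$ downward (for instance $k[[t]]\to k[[t]]/(t^{3})$ is Golod while $\lind k$ jumps from $0$ to $\infty$). So the argument cannot rest on a soft change-of-rings principle; it must exploit the extra rigidity at hand --- that $\mathrm{soc}(R)$ is a one-dimensional ideal killed by $\mm$, the explicit graded quotient $R/\mathrm{soc}(R)$ produced by Elias--Valla, and the interaction of short exact sequences with weak Koszul filtrations. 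A secondary technical point is to verify that the Elias--Valla normal form really yields the stated presentation of $R/\mathrm{soc}(R)$ and that all the linearity-defect estimates remain valid over a (non-graded) artinian local ring.
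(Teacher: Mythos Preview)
Your strategy for $h\ge 2$ and socle degree $s\ge 3$ has a genuine gap at exactly the point you flag: the change-of-rings step. You correctly obtain $\lind_R\bigl(R/\mathrm{soc}(R)\bigr)=\lind_R k+1<\infty$ as an $R$-module, and you correctly observe that $R/\mathrm{soc}(R)$ is a non-quadratic standard graded $k$-algebra, so $\lind_{R/\mathrm{soc}(R)}k=\infty$ by \Cref{thm_Koszul_charact}. But nothing in the paper---and, as far as I know, nothing in the literature---lets you pass from $\lind_R\bigl(R/\mathrm{soc}(R)\bigr)<\infty$ to $\lind_{R/\mathrm{soc}(R)}k<\infty$. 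The relevant transfer result \cite[Theorem~5.2]{Ng15} requires $R/\mathrm{soc}(R)$ to be a \emph{Koszul $R$-module}, i.e.\ $\lind_R\bigl(R/\mathrm{soc}(R)\bigr)=0$; under the standing assumption that $R$ is not Koszul this number equals $\lind_R k+1\ge 2$, so that result is unavailable. Your own example $k[[t]]\to k[[t]]/(t^{3})$ already shows that a Golod surjection with one-dimensional socle kernel can send $\lind k$ from $0$ to $\infty$, and your outline gives no concrete mechanism that would distinguish the present situation. The sentence ``the weak Koszul filtration machinery of the paper should be what makes this transfer go through'' is precisely where the proof is missing: the paper's weak Koszul filtration is not a change-of-rings device at all.

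What the paper actually does avoids any change of rings. Using the Elias--Valla generators $y_1,\dots,y_h$, it builds a weak Koszul filtration on $R$ itself, consisting of $(0)$, $\mm$, the ideals $(y_1,\dots,y_i)$ for $1\le i\le h-1$, and $(y_j,\dots,y_h)$ for $2\le j\le h$. The crucial input is the explicit colon identities
\[
(y_1,\dots,y_{i-1}):y_i=\mm,\quad (y_{j+1},\dots,y_h):y_j=\mm,\quad (0):y_1=(y_2,\dots,y_h),\quad (0):y_h=(y_1,\dots,y_{h-1}),
\]
which verify (WF3). By \Cref{lem_weakfiltr_Tormap} every associated short exact sequence $0\to R/(J{:}x)\xrightarrow{\cdot x}R/J\to R/I\to 0$ has a Tor-vanishing first map, so \Cref{prop_ld_exactseq}(3) applies. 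A bookkeeping chain of these inequalities, all computed over $R$, yields $\lind_R R/(y_1)=\lind_R k$, then $\lind_R R/(y_2,\dots,y_h)=\lind_R k-1$, then $\lind_R R/(y_h)=\lind_R k$, and finally $\lind_R R/(y_1,\dots,y_{h-1})=\lind_R k-1$; comparing the first and last forces $\lind_R k=\lind_R k+1$, the contradiction. No passage to $R/\mathrm{soc}(R)$ occurs. (For $h=1$ the paper gives a short self-contained version of the same idea using $(0):y_1=(y_1^{s})$ and $(0):y_1^{s}=(y_1)$; your appeal to the complete-intersection case of \Cref{quest_HIy} via \cite{My19,Se} is valid but less elementary.)
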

The proof of the last theorem depends on the theory of \emph{weak Koszul filtration}, which we develop in \Cref{sect_weakKoszulfiltr}. In fact, a key and (at least to us) surprising observation in the proof of \Cref{thm_HIy_artinGor}, is that any stretched Gorenstein local ring has a weak Koszul filtration, even though it might not be Koszul. The notion of weak Koszul filtration for local rings subsumes that of Koszul filtration for graded $k$-algebras considered by Conca, Trung, and Valla \cite{CTV}. While this notion is less restrictive than the local Koszul filtration proposed in \cite{Ng15}, it is flexible enough for controlling of the linearity defect: We prove in \Cref{lem_weakfiltr_Tormap} that any weak Koszul filtration induces various short exact sequences with \emph{Tor-vanishing} maps, and the existence of such short exact sequences allow for very precise bounds on the linearity defect. Hence the third ingredient in the proof of \Cref{thm_HIy_gstretched_short} is the theory of weak Koszul filtration, and the fourth is various bounds for linearity defect along short exact sequences induced by such filtration, developed in \cite{Ng15, NgV16}.

Step 3 in the proof  of \Cref{thm_main_HerzogIyengar} is concerned with the case where $R$ has dimension $1$. The rough idea is  taking advantage of the special structure of an artinian reduction $R/(x)$, which is stretched, to retrieve information about $R$ itself. The arguments for Step 3 are largely parallel to those for Step 2, employing the same ingredients, but some care has to be taken since in contrast to the case $\dim R\ge 2$, when $\dim R=1$, some linearity defect information may be lost in the passage from $R$ to $R/(x)$.

\noindent\textbf{Organization.} After the background \Cref{sect_prelim}, we reduce \Cref{thm_main_HerzogIyengar} to dimension at most 1 in \Cref{sect_dim1_reduction}. We formally introduce the weak Koszul filtration and prove its key property in \Cref{sect_weakKoszulfiltr}. \Cref{sect_g-stretched} is devoted to the proof of \Cref{thm_Matsuoka_intro}. The main result of \Cref{sect_charact_Koszulness} is the numerical characterization of $g$-stretched Koszul rings, strengthening previous work of Avramov, Iyengar, and \c{S}ega. \Cref{sect_dim0_case} deals with the zero-dimensional case of \Cref{thm_main_HerzogIyengar}: We deduce it from the more general \Cref{thm_HIy_gstretched_short} (see \Cref{thm_HIy_gstretched_long}). In Section \ref{sect_dim1_case}, we prove our main result, \Cref{thm_main_HerzogIyengar} after resolving its dimension one case. We end the paper by discussing some related open questions.

\section{Preliminaries}
\label{sect_prelim}
We recall the notion of linearity defect which Herzog and Iyengar introduced in \cite{HIy}, based in the notion of the linear part of a minimal free resolution. The linear part appeared in work of Herzog et al.\,\cite[Section 5]{HSV83} and Eisenbud et al.\,\cite{EFS}. Let $(R, \mm, k)$ be a noetherian local ring, and $M$ a finitely generated $R$-module. Let $\gr_\mm(R)=\oplus_{i\ge0}\mm^i/\mm^{i+1}$ be the associated graded ring of $R$ with respect to the $\mm$-adic filtration. Let the
minimal free resolution of $M$ be
$$\FF:\cdots\xrightarrow{\partial}F_i\xrightarrow{\partial}F_{i-1}\xrightarrow{\partial}\cdots\xrightarrow{\partial}F_1\xrightarrow{\partial}F_0\to0.$$
Since $\FF$ is minimal, it admits a filtration $\cdots\subseteq\Fc^i\FF\subseteq\Fc^{i-1}\FF\subseteq\cdots\subseteq\Fc^0\FF = \FF$ as follows:
$$\Fc^i\FF:\cdots\to F_j\to F_{j-1}\to\cdots\to F_i\to \mm F_{i-1}\to\cdots\to \mm^{i-1}F_1\to \mm^iF_0\to 0.$$
The associated graded complex
$$\linp^R\FF=\bigoplus_{i\ge 0}\frac{\Fc^i\FF}{\Fc^{i+1}\FF}$$
is called {\it the linear part} of $\FF$. Clearly, $\linp^R\FF$ is a complex of graded $\gr_\mm(R)$-modules with $(\linp^R\FF)_i = (\gr_\mm F_i)(-i)$. The {\it linearity defect} of $M$ is defined to be
$$\lind_R M = \sup\left\{ i : H_i(\linp^R\FF)\neq 0\right\}.$$
By convention, if $M = 0$, we set $\lind_R M = 0$. We say that $M$ is a {\it Koszul module} if $\lind_R M = 0$. A local ring $R$ is said to be {\it Koszul} if its residue field $k$ is Koszul as a module over $R$. 

We say that an $R$-linear map $\phi: M\to P$ is \emph{Tor-vanishing} if the induced maps $\Tor^R_i(k,\phi)$ have zero images for all $i\in \ZZ$. Using \c{S}ega's resolution-free characterization of the linearity defect in \cite[Theorem 2.2]{Se}, we can prove the following statements.
\begin{prop}[{\cite[Corollary 2.10]{Ng15}} and {\cite[Proposition 4.3]{NgV16}}]
\label{prop_ld_exactseq}
Consider any short exact sequence  $0\to M\xlongrightarrow{\phi} P\to N \to 0$ of finitely generated $R$-modules. 
\begin{enumerate}[\quad \rm (1)]
\item There are inequalities:
\begin{enumerate}[\quad \rm(i)]
\item $\lind_R N \le \min \{\max\{\pd_R P+1,\lind_R M+1\}, \max\{\lind_R P, \pd_R M+1\} \}$,
\item $\lind_R P \le \min \{\max\{\pd_R M+1,\lind_R N\}, \max\{\lind_R M, \pd_R N\}\}$,
\item $\lind_R M \le \min \{\max\{\lind_R N-1, \pd_R P\},\max\{\pd_R N, \lind_R P\} \}$.
\end{enumerate}
\item If $P$ is free then $\lind_R M=\lind_R N-1$ if $\lind_R N\ge 1$ and $\lind_R M=0$ if $\lind_R N=0$.
\item If $\phi$ is Tor-vanishing, then there are inequalities:
\begin{align*}
\lind_R M &\le \max\{\lind_R P, \lind_R N-1\},\\
\lind_R P &\le \max\{\lind_R M, \lind_R N\},\\
\lind_R N &\le \max\{\lind_R M+1,\lind_R P\}.
\end{align*}
\end{enumerate}
\end{prop}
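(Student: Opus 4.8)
The plan is to isolate part (2), the elementary syzygy formula, and to deduce (1) and (3) from \c{S}ega's resolution-free characterization of the linearity defect (\cite[Theorem 2.2]{Se}): for a finitely generated $R$-module $L$ and $n\ge 0$, one has $\lind_R L\le n$ if and only if the maps $\Tor^R_i(k,\mm^{j+1}L)\to\Tor^R_i(k,\mm^jL)$ induced by the inclusions vanish for all $i>n$ and all $j\ge0$. Since this criterion involves only $\Tor$-modules and the $\mm$-adic filtration, it propagates through the long exact sequences attached to $0\to M\xrightarrow{\phi}P\to N\to0$. Part (2) I would settle directly: if $P$ is free then $M$ agrees with $\Omega_R N$ up to a free direct summand, which does not affect $\lind$, so the statement reduces to the identity $\lind_R\Omega_R N=\max\{\lind_R N-1,0\}$; this follows from the definition of $\linp^R$, because the minimal free resolution of $\Omega_R N$ is a shift and truncation of that of $N$, while the homology of a linear part in homological degree $0$ is nonzero for every nonzero module.

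For (3), lift $\phi$ to a chain map $\wti\phi\colon\FF\to\Gcc$ between the minimal free resolutions of $M$ and $P$. Saying $\phi$ is Tor-vanishing is exactly saying $\wti\phi(\FF_i)\subseteq\mm\Gcc_i$ for all $i$. Since $\phi$ is injective, the mapping cone $\mathcal C=\mathrm{Cone}(\wti\phi)$ is a free resolution of $N=P/\phi(M)$, and the inclusions $\wti\phi(\FF_i)\subseteq\mm\Gcc_i$, together with minimality of $\FF$ and $\Gcc$, force every entry of the differential of $\mathcal C$ into $\mm$; hence $\mathcal C$ is the \emph{minimal} free resolution of $N$. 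Because $\wti\phi$ strictly raises $\mm$-adic order, a filtration-compatibility check (using this minimality) shows that $\linp^R$ sends the tautological short exact sequence of complexes $0\to\Gcc\to\mathcal C\to\FF[1]\to0$ to a short exact sequence
$$0\longrightarrow\linp^R\Gcc\longrightarrow\linp^R\mathcal C\longrightarrow\linp^R(\FF[1])\longrightarrow0,$$
in which $\linp^R(\FF[1])$ is, up to an internal degree shift, $\linp^R\FF$ moved up one homological degree. Reading the resulting long exact homology sequence around $H_i(\linp^R\mathcal C)$, around $H_i(\linp^R\Gcc)$, and around $H_i(\linp^R\FF)$ in turn, and using $\lind_R N=\sup\{i:H_i(\linp^R\mathcal C)\ne0\}$ --- which holds precisely because $\mathcal C$ is minimal, the point Tor-vanishing bought us --- produces the three inequalities of (3) in the stated order.

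For (1), the corresponding mapping cones are no longer minimal, so I would argue directly with \c{S}ega's criterion, handling each of the six sub-inequalities obtained by splitting the three minima. In each case one must show that the maps $\Tor^R_i(k,\mm^{j+1}L)\to\Tor^R_i(k,\mm^jL)$ for the relevant $L\in\{M,P,N\}$ vanish for $i$ past the claimed bound. The input is the long exact $\Tor$-sequences of the short exact sequences obtained by intersecting $0\to M\xrightarrow{\phi}P\to N\to0$ with powers of $\mm$, namely $0\to \phi(M)\cap\mm^jP\to\mm^jP\to\mm^jN\to0$ and its companions for $M$ and $N$: a finite $\pd_R(-)$ makes $\Tor^R_i(k,-)$ vanish above $\pd_R(-)$, which is where the extra $+1$ (when present) enters through a connecting map, whereas a finite $\lind_R(-)$ makes the maps for that module vanish above $\lind_R(-)$ by \c{S}ega's criterion applied to it; the two halves of each minimum come from the two ways of deploying the hypotheses. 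The mapping-cone and horseshoe-resolution picture --- resolve $N$ by the cone of a lift of $\phi$, resolve $P$ by a horseshoe resolution built from those of $M$ and $N$, and so on, then minimalize --- reproduces bounds of the right shape and is a useful guide, though extracting the sharp additive constants from it is more delicate.

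The step I expect to be the main obstacle is the filtration bookkeeping in part (1): the monomorphism $\phi$ need not be strict for the $\mm$-adic filtrations, so $\phi(M)\cap\mm^jP$ may properly contain $\mm^jM$, and one must invoke the Artin--Rees lemma to bound this discrepancy by a fixed constant and then check that shifting a filtration index by that constant does not disturb the asymptotic-in-$i$ vanishing demanded by \c{S}ega's criterion; equivalently, in the mapping-cone picture, one must control how far into higher homological degrees a cancellation in low degrees can propagate, which is exactly what makes the precise additive constants delicate. A smaller but genuine point is the filtration-compatibility claim used in part (3) --- that $\linp^R$ carries the short exact sequence of mapping-cone complexes to a short exact sequence with the asserted internal gradings --- which is where Tor-vanishing, rather than a weaker hypothesis on $\phi$, is actually needed.
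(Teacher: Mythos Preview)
The paper does not prove this proposition; it is imported from \cite[Corollary~2.10]{Ng15} and \cite[Proposition~4.3]{NgV16}, with only the prefatory remark that \c{S}ega's characterization \cite[Theorem~2.2]{Se} is the relevant tool, so there is no in-paper argument to compare against beyond that pointer. Your treatments of (2) and (3) are correct and complete: the mapping-cone proof of (3) in particular is clean, since Tor-vanishing makes the cone $\mathcal C$ minimal, the degreewise splitting $\mathcal C_i=\Gcc_i\oplus\FF_{i-1}$ exhibits $\linp^R\Gcc$ as a subcomplex of $\linp^R\mathcal C$ with quotient a shift of $\linp^R\FF$, and the long exact homology sequence gives all three bounds.

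For (1) there is a genuine gap. You correctly name \c{S}ega's criterion as the tool, but the route you sketch---filtering the short exact sequence by $\mm^j$ and then invoking Artin--Rees to bridge $\phi(M)\cap\mm^jP$ and $\mm^jM$---does not close. Artin--Rees yields only $\phi(M)\cap\mm^jP\subseteq\mm^{j-c}\phi(M)$ for a fixed $c$, and there is no mechanism by which a uniform shift in $j$ transports the vanishing of $\Tor_i(k,\mm^{j+1}M)\to\Tor_i(k,\mm^jM)$ to the analogous maps for the intermediate submodules $\phi(M)\cap\mm^jP$: the criterion concerns specific maps at each pair $(i,j)$, not an asymptotic statement in $j$. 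The proofs in the cited references avoid filtering the sequence altogether and stay at the level of resolutions, exploiting that when one of $\pd_RM$, $\pd_RP$, $\pd_RN$ is finite the corresponding minimal resolution terminates, so the (possibly non-minimal) mapping cone or horseshoe eventually coincides with a shift of one of the other two resolutions; tracking what minimalization does in the finitely many remaining degrees then gives the additive constants. You should also verify your formulation of \c{S}ega's criterion against \cite[Theorem~2.2]{Se}; the form invoked elsewhere in this paper (cf.\ the proof of \Cref{lem_weakfiltr_Tormap}) is phrased in terms of connecting maps out of $\Tor^R_i(k,-)$ rather than inclusions $\mm^{j+1}L\hookrightarrow\mm^jL$.
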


\begin{thm}[{\cite[Theorem 3.5 and its proof]{Ng15}}]
\label{thm_small_inclusion}
Let $0\longrightarrow M\xlongrightarrow{\phi} P \longrightarrow N \longrightarrow 0$ be a short exact sequence of finitely generated $R$-modules where
\begin{enumerate}[\quad\rm(i)]
\item $P$ is a Koszul module;
\item $M\subseteq \mm P$.
\end{enumerate}
Then $\phi$ is Tor-vanishing, and there are inequalities $\lind_R N-1 \le \lind_R M \le \max\{0,\lind_R N-1\}$. In particular, $\lind_R N=\lind_R M+1$ if $\lind_R M\ge 1$ and $\lind_R N\le 1$ if $\lind_R M=0$. 

Moreover, $\lind_R N=0$ if and only if $M$ is a Koszul module and $M\cap \mm^{s+1}P=\mm^sM$ for all $s\ge 0$.
\end{thm}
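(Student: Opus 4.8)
The plan is to reduce everything to the Tor-vanishing of $\phi$ plus a single mapping-cone computation. Granting that $\phi$ is Tor-vanishing and recalling that $\lind_R P=0$ (since $P$ is Koszul), part (3) of \Cref{prop_ld_exactseq} yields at once
\[
\lind_R M\le\max\{\lind_R P,\lind_R N-1\}=\max\{0,\lind_R N-1\}\qquad\text{and}\qquad\lind_R N\le\lind_R M+1,
\]
which is precisely the asserted chain $\lind_R N-1\le\lind_R M\le\max\{0,\lind_R N-1\}$; the ``in particular'' clause is then the case split on whether $\lind_R M\ge 1$ (which forces $\lind_R N-1=\lind_R M$) or $\lind_R M=0$ (which forces $\lind_R N\le 1$). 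So the real content is (i) the Tor-vanishing of $\phi$ and (ii) the ``moreover'' equivalence.

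For (i), I would fix minimal free resolutions $(F,\partial^F)$ of $M$ and $(G,\partial^G)$ of $P$ and build a lifting $\widetilde\phi\colon F\to G$ of $\phi$ one homological degree at a time, maintaining the invariant $\widetilde\phi_i(F_i)\subseteq\mm G_i$; as $\Tor_i^R(k,\phi)$ is computed by $\widetilde\phi_i\otimes_R k$, this invariant for all $i$ is exactly Tor-vanishing. In degree $0$ it holds because $F_0\to M\xrightarrow{\phi}P$ lands in $\mm P$ (as $M\subseteq\mm P$), while the minimal surjection $G_0\to P$ has kernel inside $\mm G_0$ and hence pulls $\mm P$ back to $\mm G_0$. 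For the inductive step, after lifting $\widetilde\phi$ to degree $i+1$ one has $\partial^G_{i+1}\widetilde\phi_{i+1}=\widetilde\phi_i\partial^F_{i+1}$, and since $\partial^F_{i+1}(F_{i+1})\subseteq\mm F_i$ by minimality and $\widetilde\phi_i(F_i)\subseteq\mm G_i$ by the invariant, this forces $\partial^G_{i+1}(\widetilde\phi_{i+1}(F_{i+1}))\subseteq\mm^2 G_i$. Here the Koszulness of $P$ enters: the internal-degree-$(i+1)$ strand of $\linp^R G$ is the complex $\cdots\to G_{i+1}/\mm G_{i+1}\xrightarrow{\overline{\partial^G_{i+1}}}\mm G_i/\mm^2 G_i\to\cdots$ whose term in homological degree $i+2$ is zero, so $H_{i+1}(\linp^R G)=0$ forces the leading-term map $\overline{\partial^G_{i+1}}$ to be injective. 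Combined with $\partial^G_{i+1}(\widetilde\phi_{i+1}(F_{i+1}))\subseteq\mm^2 G_i$, this gives $\widetilde\phi_{i+1}(F_{i+1})\subseteq\mm G_{i+1}$, closing the induction (the step from degree $0$ to degree $1$ is identical, using $H_1(\linp^R G)=0$).

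For (ii), since $\phi$ is Tor-vanishing the lifting $\widetilde\phi$ has every component with entries in $\mm$, so the mapping cone $C(\widetilde\phi)$, with $C(\widetilde\phi)_i=G_i\oplus F_{i-1}$, has differential with entries in $\mm$; being a free resolution of $N\cong P/M$, it is therefore the minimal free resolution of $N$. Its linear part is the mapping cone of the leading-term morphism $\overline{\widetilde\phi}\colon(\linp^R F)(-1)\to\linp^R G$ (the shift $(-1)$ reflecting that $\widetilde\phi_i$ carries $\mm^jF_i$ into $\mm^{j+1}G_i$), which gives a long exact sequence
\[
\cdots\to H_i(\linp^R F)(-1)\to H_i(\linp^R G)\to H_i(\linp^R C(\widetilde\phi))\to H_{i-1}(\linp^R F)(-1)\to\cdots.
\]
By the inequalities already proved, $\lind_R N=0$ forces $\lind_R M=0$, so in verifying the equivalence we may assume that $M$ and $P$ are both Koszul; then $H_i(\linp^R F)=H_i(\linp^R G)=0$ for $i\ge 1$, the long exact sequence gives $H_i(\linp^R C(\widetilde\phi))=0$ for $i\ge 2$ for free, and $H_1(\linp^R C(\widetilde\phi))\cong\ker H_0(\overline{\widetilde\phi})$. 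Since $H_0(\linp^R F)=\gr_\mm M$ and $H_0(\linp^R G)=\gr_\mm P$, and $H_0(\overline{\widetilde\phi})$ is the natural map $(\gr_\mm M)(-1)\to\gr_\mm P$ induced by the inclusions $\mm^{n-1}M\subseteq\mm^nP$, I conclude that $\lind_R N=0$ if and only if $M$ is Koszul and this natural map is injective. Finally, injectivity of $(\gr_\mm M)(-1)\to\gr_\mm P$ says exactly that $\mm^sM\cap\mm^{s+2}P=\mm^{s+1}M$ for all $s\ge 0$, and a short induction on $s$ (with ``$\supseteq$'' automatic from $M\subseteq\mm P$ and ``$\subseteq$'' supplied by the previous step) shows this is equivalent to $M\cap\mm^{s+1}P=\mm^sM$ for all $s\ge 0$.

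The step I expect to need the most care is the identification of $\linp^R C(\widetilde\phi)$ with the mapping cone of $\overline{\widetilde\phi}$ together with the correct internal-degree shifts, and the verification that $H_0(\overline{\widetilde\phi})$ is the expected natural map; both of these are elementary but bookkeeping-heavy. A more streamlined alternative for (ii) bypasses mapping cones and runs instead through \c{S}ega's resolution-free characterization of the linearity defect (\cite[Theorem~2.2]{Se}), computing the relevant comparison maps on $\Tor_i^R(k,\mm^jN)$ via $\mm^jN=(\mm^jP+M)/M$.
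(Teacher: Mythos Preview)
The paper does not prove this statement---it is cited from \cite[Theorem~3.5 and its proof]{Ng15} as background in \Cref{sect_prelim}---so there is no in-paper argument to compare against. Your proof is correct: the inductive lifting argument for Tor-vanishing (using that $H_{i+1}(\linp^R G)=0$ in internal degree $i+1$ forces the leading-term map $G_{i+1}/\mm G_{i+1}\to\mm G_i/\mm^2 G_i$ to be injective) and the mapping-cone analysis of $\linp^R C(\widetilde\phi)$ are exactly the ingredients one expects, and the bookkeeping you flag is routine once you observe that $\widetilde\phi_i(\mm^jF_i)\subseteq\mm^{j+1}G_i$ makes the $\mm$-adic filtration on $C(\widetilde\phi)$ split compatibly with the summands $G$ and $F[-1]$.
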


\begin{rem}
\label{rem_ldge1}
Let $(R,\mm,k)$ be a noetherian local ring. If $\lind_R k\le 1$, then $\lind_R k=0$. Indeed, as $R$ is a Koszul $R$-module, the exact sequence
\[
0\to \mm \to R \to k \to 0
\]
and \Cref{thm_small_inclusion} imply that $\lind_R \mm \le \max\{0,\lind_R k-1\}=0$. But since $\mm \cap \mm^{s+1}R=\mm^s\mm$ for all $s\ge 0$, the same result implies that $\lind_R k=0$, as desired.
\end{rem}

A local ring $R$ is \emph{absolutely Koszul} if every finitely generated $R$-module has a finite linearity defect; see \cite{CINR, HIy} for more details. Let 
\[
\glind R=\sup \{\lind_R M: M \, \, \text{a finitely generated $R$-module}\}
\]
denote the \emph{global linearity defect} of $R$. To study the absolutely Koszul property, the passage to completion rings, that is allowed by the following result, might be useful.
\begin{prop}
\label{prop.passtocompletion}
Let $(R,\mm)$ be a noetherian local ring, and $(\wht{R},\wht{\mm})$ its $\mm$-adic completion. Then $R$ is Koszul \textup{(}respectively, absolutely Koszul\textup{)} if and only if $\wht{R}$ is so. Moreover, we always have equalities $\lind_R k=\lind_{\wht{R}}k$ and $\glind R=\glind \wht{R}$.
\end{prop}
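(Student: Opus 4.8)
The plan is to exploit the two basic facts relating a noetherian local ring to its completion: faithful flatness of $R\to\wht R$, and the fact that completion does not change the associated graded ring. First I would observe that $\gr_{\mm}(R)\cong\gr_{\wht\mm}(\wht R)$ as graded $k$-algebras, since $\mm^i/\mm^{i+1}\cong\wht\mm^i/\wht\mm^{i+1}$ for all $i$; in particular $R$ is a standard graded-type situation where Koszulness of $R$ (resp. $\wht R$) is by definition a property of the $\gr$ ring applied to $k$, so the ``residue field'' half of the statement reduces to comparing $\lind_R k$ and $\lind_{\wht R}k$. The key engine is that if $\Fc$ is the minimal free resolution of a finitely generated $R$-module $M$, then $\wht R\otimes_R\Fc$ is the minimal free resolution of $\wht M=\wht R\otimes_R M$ over $\wht R$ (minimality is preserved because $\wht R\otimes_R(-)$ sends $\mm\Fc_i$ into $\wht\mm(\wht R\otimes_R\Fc_i)$, by flatness and $\mm\wht R=\wht\mm$). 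Then I would check that the linear-part filtration is compatible with this base change: $\wht R\otimes_R\Fc^i\Fc = \Fc^i(\wht R\otimes_R\Fc)$, using again $\mm^j\wht R=\wht\mm^j$ and exactness of $\wht R\otimes_R(-)$ on the inclusions $\Fc^{i+1}\Fc\subseteq\Fc^i\Fc$. Consequently $\linp^{\wht R}(\wht R\otimes_R\Fc)\cong\gr_{\wht\mm}(\wht R)\otimes_{\gr_\mm(R)}\linp^R\Fc$, and since this tensor is along the identification $\gr_\mm(R)=\gr_{\wht\mm}(\wht R)$ it is literally the same complex. Taking homology gives $H_i(\linp^{\wht R}(\wht R\otimes_R\Fc))\cong H_i(\linp^R\Fc)$ for all $i$, hence $\lind_{\wht R}\wht M=\lind_R M$. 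Applying this with $M=k$ (so $\wht M=k$) gives $\lind_R k=\lind_{\wht R}k$, and the equivalence ``$R$ Koszul $\iff$ $\wht R$ Koszul'' follows immediately, as does one inequality $\glind\wht R\ge\sup_M\lind_{\wht R}\wht M=\glind R$.

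The remaining, slightly more delicate point is the reverse inequality $\glind\wht R\le\glind R$, i.e. that \emph{every} finitely generated $\wht R$-module $N$ has $\lind_{\wht R}N\le\glind R$, not just those of the form $\wht M$. Here I would like to say that every finitely generated $\wht R$-module is a syzygy-shift of an extended module or at least is "close enough"; the standard device is: for a finitely generated $\wht R$-module $N$, take a finite free presentation over $\wht R$, which is the completion of a free presentation of some finitely generated $R$-module (the presenting matrix has entries in $\wht R$, but by density of $R$ in $\wht R$ and the Artin--Rees/Nakayama machinery one can choose the matrix with entries in $R$ without changing the cokernel up to the relevant invariants). Concretely: a first syzygy $N_1=\Omega^{\wht R}_1 N\subseteq \wht R^b$ is generated by finitely many vectors; approximating these generators by vectors over $R^b$ modulo a high power of $\wht\mm$ and invoking that linearity defect is insensitive to such perturbations (again via \c{S}ega's resolution-free characterization cited before Proposition \ref{prop_ld_exactseq}) lets us realize $N$ as $\wht{M}$ for a suitable $M$. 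Then $\lind_{\wht R}N=\lind_R M\le\glind R$. Combining both inequalities yields $\glind R=\glind\wht R$, and the "absolutely Koszul" equivalence is the statement that one of these is finite iff the other is, which is now immediate.

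I expect the main obstacle to be exactly this last approximation argument — making precise that an arbitrary finitely generated $\wht R$-module can be replaced, without changing its linearity defect, by the completion of a finitely generated $R$-module. The clean way to handle it, which I would use rather than an ad hoc matrix-perturbation argument, is: (i) prove the base-change identity $\lind_{\wht R}\wht M=\lind_R M$ as above; (ii) prove separately that $\lind_{\wht R}N$ depends only on the finite free complex $(\wht R\otimes_R \Fc_{\bullet})$ truncated far enough, together with the elementary fact (from \c{S}ega's characterization) that $\lind_{\wht R}N$ is determined by the maps $\Tor^{\wht R}_i(k,-)$ applied to the canonical filtration, which for $\wht R$-modules only sees $\wht R/\wht\mm^j = R/\mm^j$-level data; (iii) conclude that since every $R/\mm^j$ is the same as $\wht R/\wht\mm^j$, any $\wht R$-module's relevant finite-length truncations are modules over a common artinian quotient, pull back to $R$, and give the needed $M$. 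With the base-change identity in hand the two displayed equalities and both stated equivalences follow formally, so the proof is essentially: establish $\gr_\mm R=\gr_{\wht\mm}\wht R$, push a minimal resolution through $\wht R\otimes_R(-)$, check linear parts match, and handle the one surjectivity issue $\glind\wht R\le\glind R$ by the truncation argument.
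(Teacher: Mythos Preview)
Your treatment of the base-change identity $\lind_{\wht R}\wht M=\lind_R M$ is correct and is exactly what the paper invokes (citing \cite[Lemma~3.1]{NgV16}); this immediately gives $\lind_R k=\lind_{\wht R}k$, the Koszul equivalence, and the inequality $\glind R\le\glind\wht R$.

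The gap is in the reverse inequality $\glind\wht R\le\glind R$. Your matrix-perturbation idea is not workable as stated: perturbing a presentation matrix by elements of high $\wht\mm$-adic order can change the isomorphism class of the cokernel drastically, and you offer no mechanism by which linearity defect is ``insensitive'' to such changes. Your ``clean way'' in (ii)--(iii) is closer in spirit to the paper's argument, but it is still missing the crucial step: you need to know that passing from $N$ to a finite-length truncation $N/\wht\mm^{\,n}N$ does not \emph{decrease} the linearity defect. \c{S}ega's characterization by itself does not give this; the maps $\Tor^{\wht R}_i(\wht R/\wht\mm^{\,q},N)$ are not the same as $\Tor^{\wht R}_i(\wht R/\wht\mm^{\,q},N/\wht\mm^{\,n}N)$, and there is no a priori reason the relevant vanishing conditions transfer.

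The paper fills this gap with a separate lemma (\Cref{lem.reductiontofinitelength}): for $n\gg0$ one has
\[
\lind_R(M/I^nM)=\max\{\lind_R M,\ \lind_R(I^nM)+1\},
\]
so in particular $\lind_R(M/I^nM)\ge\lind_R M$. The proof is not a direct application of \c{S}ega's criterion; it uses Avramov's result that $I^nM\to M$ is Tor-vanishing for $n\gg0$, applies this \emph{twice} to force the lifted map $\phi\colon F\to H$ between minimal resolutions to satisfy $\phi(F)\subseteq\mm^2H$, and then observes that this forces the linear part of the mapping cone to split as $\linp^R H\oplus(\linp^R F)[-1]$. With this lemma in hand, the finite-length module $U=N/\wht\mm^{\,n}N$ is an $R$-module (via $R/\mm^n\cong\wht R/\wht\mm^{\,n}$) with $U\otimes_R\wht R\cong U$, and one concludes $\lind_{\wht R}N\le\lind_{\wht R}U=\lind_R U\le\glind R$. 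You should replace your heuristic (ii)--(iii) with this concrete lemma and its double Tor-vanishing argument.
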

The proof uses the following lemma, which was used for the demonstration of \cite[Theorem 1.1]{NgV18}. This lemma might be of independent interest, hence we recall its proof for completeness.
\begin{lem}
\label{lem.reductiontofinitelength}
Let $(R,\mm)$ be a noetherian local ring, $I\subseteq \mm$ an ideal and $M$ a finitely generated $R$-module. Then there exists an integer $N$ \textup{(}depending only on $M$ and $I$\textup{)} such that for every integer $n\ge N$,  there is an equality
\[
\lind_R(M/I^nM) =\max\{\lind_R M, \lind_R (I^nM)+1\}.
\]
\end{lem}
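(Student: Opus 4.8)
The plan is to exploit the stability of minimal free resolutions with respect to powers of an ideal, combined with the behavior of the linearity defect along the short exact sequence defining the quotient. First I would invoke the Artin--Rees lemma: there exists an integer $c$ such that for all $n\ge c$ we have $I^{n}M \cap \mm^{j}M = I^{n-c}\bigl(I^{c}M \cap \mm^{j}M\bigr)$ for all $j$, and more usefully, for $n$ large the submodule $I^{n}M$ has the property that the induced filtration and the $\mm$-adic filtration on it are "eventually comparable". The key structural input, however, is a result of the type used in \cite{NgV18}: for $n \gg 0$, the minimal free resolution of $I^{n}M$ stabilizes in the sense that the inclusion $I^{n}M \hookrightarrow M$ becomes Tor-vanishing once $n$ is large enough (indeed $I^nM \subseteq \mm^n M$, and for $n$ exceeding the maximal degree shift in the resolution of $M$, the comparison map on Tor vanishes). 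This is precisely the situation of \Cref{thm_small_inclusion} applied to $0\to I^nM \to M \to M/I^nM \to 0$, provided $M$ itself is a Koszul module — which in general it need not be, so the argument must be adapted.

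The way around this is to use the Tor-vanishing bounds of \Cref{prop_ld_exactseq}(3) rather than \Cref{thm_small_inclusion}. So the main steps are: (1) show that for $n\ge N$ (with $N$ depending only on $M$ and $I$) the inclusion $\phi_n\colon I^nM \hookrightarrow M$ is Tor-vanishing — this follows because $I^nM\subseteq \mm^n M$ and, choosing $N$ larger than the top nonzero degree appearing in the (finite-in-each-homological-degree) data of $\linp^R\FF$ up to the relevant range, the maps $\Tor_i^R(k,\phi_n)$ are forced to be zero; (2) apply \Cref{prop_ld_exactseq}(3) to the sequence $0\to I^nM \xrightarrow{\phi_n} M \to M/I^nM\to 0$ to obtain the three inequalities
\[
\lind_R(I^nM)\le \max\{\lind_R M,\ \lind_R(M/I^nM)-1\},\quad
\lind_R M\le \max\{\lind_R(I^nM),\ \lind_R(M/I^nM)\},
\]
\[
\lind_R(M/I^nM)\le \max\{\lind_R(I^nM)+1,\ \lind_R M\};
\]
(3) combine these to pin down $\lind_R(M/I^nM)$ exactly. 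From the third inequality, $\lind_R(M/I^nM)\le \max\{\lind_R M,\lind_R(I^nM)+1\}$. For the reverse inequality, one shows $\lind_R M\le \lind_R(M/I^nM)$ and $\lind_R(I^nM)+1\le \lind_R(M/I^nM)$. The first of these is immediate from the second displayed inequality once we know $\lind_R(I^nM)\le \lind_R(M/I^nM)$, which in turn follows if $\lind_R(I^nM)\le \lind_R M$; and the latter holds for $n\gg 0$ by a separate stability argument (the syzygies of $I^nM$ are, up to a shift and free summands, eventually independent of $n$, so $\lind_R(I^nM)$ stabilizes, and one checks the stable value is $\le \lind_R M$ using that $I^nM$ is a high syzygy-like module — alternatively use \Cref{prop_ld_exactseq}(1)(iii)). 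Matching the two bounds yields the claimed equality.

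The main obstacle I expect is step (3), specifically establishing $\lind_R(I^nM)+1\le \lind_R(M/I^nM)$ without circularity, since the Tor-vanishing inequalities alone only give one direction cleanly. The resolution is to separate two cases according to whether $\lind_R(I^nM)\ge \lind_R M$ or not. If $\lind_R(I^nM)\ge \lind_R M$, then from the Tor-vanishing sequence (reading it as in \Cref{thm_small_inclusion}-type behavior, using that $I^nM\subseteq\mm M$) one gets $\lind_R(M/I^nM)=\lind_R(I^nM)+1$ directly. If $\lind_R(I^nM)<\lind_R M$, one argues $\lind_R(M/I^nM)=\lind_R M$ by showing both $\le$ (from the third inequality) and $\ge$ (from the second, since $\lind_R M\le\max\{\lind_R(I^nM),\lind_R(M/I^nM)\}=\lind_R(M/I^nM)$ as the first term is strictly smaller). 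In either case the right-hand side $\max\{\lind_R M,\lind_R(I^nM)+1\}$ is matched. The only remaining technical point is to make the choice of $N$ uniform: one takes $N$ large enough that $\phi_n$ is Tor-vanishing for all $n\ge N$ and that $\lind_R(I^nM)$ has reached its stable value, both of which depend only on $M$ and $I$ by Artin--Rees and finiteness of the relevant Betti and linearity-part data.
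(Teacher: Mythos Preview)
Your plan has a genuine gap in step (3), precisely in the case $\lind_R(I^nM)=\lind_R M$. With only Tor-vanishing of $\phi_n$, the three inequalities of \Cref{prop_ld_exactseq}(3) give, writing $a=\lind_R M$, $b=\lind_R(I^nM)$, $c=\lind_R(M/I^nM)$: $b\le\max\{a,c-1\}$, $a\le\max\{b,c\}$, $c\le\max\{b+1,a\}$. When $b<a$ you correctly deduce $c=a$, and when $b>a$ you get $c=b+1$. But when $b=a$ all three inequalities reduce to $c\le a+1$ and two tautologies; nothing forces $c=a+1$. Your appeal to ``\Cref{thm_small_inclusion}-type behavior'' to cover this case does not work, because that result requires the middle module $M$ to be Koszul, an assumption you explicitly noted is not available here. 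The vague ``stability'' claims about $\lind_R(I^nM)$ do not close the gap either: even if $\lind_R(I^nM)$ stabilizes and equals $\lind_R M$, you are stuck.

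The paper's proof avoids this entirely by strengthening Tor-vanishing to a ``double'' version. Using Avramov's result \cite[Corollary A.4]{Avr78} twice, one finds $N_1$ so that $I^nM\to M$ is Tor-vanishing for $n\ge N_1$, and then $N_2$ so that $I^{n}M\to I^{N_1}M$ is Tor-vanishing for $n\ge N_1+N_2$. For $n\ge N:=N_1+N_2$, the lift $\phi\colon F\to H$ between the minimal free resolutions of $I^nM$ and $M$ therefore factors as $F\to G\to H$ with each step landing in $\mm\cdot(\text{target})$, so $\phi(F)\subseteq \mm^2 H$. This extra order of vanishing forces the linear part of the (minimal) mapping cone $W=H\oplus F[-1]$ to split as $\linp^R W\cong \linp^R H\oplus(\linp^R F)[-1]$, whence the claimed equality is immediate with no case analysis. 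A secondary point: your justification for Tor-vanishing (``$I^nM\subseteq\mm^nM$ and degree-shift considerations'') is not adequate on its own; the paper relies on Avramov's theorem for this step.
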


\begin{proof}
We use a result of Avramov \cite[Corollary A.4]{Avr78}, saying that there exits an integer $N_1\ge 1$, depending only on $I$ and $M$, such that for all $n\ge N_1$, $\Tor^R_i(k,I^nM) \to \Tor^R_i(k,M)$ is the zero map for all $i$, i.e. $I^nM\to M$ is Tor-vanishing. Another application of the same result for the pair $(I, I^{N_1}M)$ shows that there exists an integer $N_2\ge 1$ such that for all $n\ge N_2$, the map  $I^{n+N_1}M \to I^{N_1}M$ is Tor-vanishing. 

Take $n\ge N:=N_1+N_2$. Let $F, G, H$ be minimal free resolutions of $I^nM$, $I^{N_1}M$, and  $M$, respectively. Let $\phi_1: F\to G, \phi_2: G\to H$ be liftings of the natural maps $I^nM \to I^{N_1}M$ and $I^{N_1}M \to M$, respectively. Then $\phi_1(F)\subseteq \mm G, \phi_2(G)\subseteq \mm H$. Denoting $\phi=\phi_2\circ \phi_1: F\to H$, then $\phi$ is a lifting of the natural map $I^nM \to M$ and $\phi(F)\subseteq \mm^2H$. 

Consider the short exact sequence
\[
0\to I^nM \to M \to M/I^nM \to 0.
\]
The mapping cone $W=H\oplus F[-1]$ of $\phi: F\to H$ is then a minimal free resolution of $M/I^nM$. Moreover, the condition $\phi(F)\subseteq \mm^2H$ implies the direct sum decomposition of complexes of $\gr_\mm R$-modules
\[
\linp^R W \cong \linp^R H \oplus (\linp^R F)[-1].
\]
This implies 
\[
\lind_R(M/I^nM) =\max\{\lind_R M, \lind_R (I^nM)+1\},
\]
and concludes the proof.
\end{proof}
\begin{proof}[Proof of \Cref{prop.passtocompletion}]
 Let $M$ be any finitely generated $R$-module. Since $\gr_\mm R\cong \gr_{\wht{\mm}}\wht{R}$, applying  \cite[Lemma 3.1]{NgV16}, we get 
\[
\lind_R M=\lind_{\wht{R}}(M\otimes_R \wht{R}).
\]
In particular $\lind_R k=\lind_{\wht{R}}k$ and $R$ is Koszul if and only if $\wht{R}$ is so. Moreover,  $\glind R\le \glind \wht{R}$, and if $\wht{R}$ is absolutely Koszul, then so is $R$.

Let $N$ be any finitely generated $\wht{R}$-module. By \Cref{lem.reductiontofinitelength}, there exists an integer $N\ge 1$ such that for all $n\ge N$,
\[
\lind_{\wht{R}}(N/\wht{\mm}^nN)=\max\{\lind_{\wht{R}} N, \lind_{\wht{R}} (\wht{\mm}^n N)+1\}.
\]
Fix such an integer $n$, and let $U=N/\wht{\mm}^nN$. We claim that $U$ is a finitely generated $R$-module, and $U\otimes_R \wht{R} \cong U$ as $\wht{R}$-modules. Indeed, since $N$ is finitely generated over $\wht{R}$, $U$ is finitely generated over $\wht{R}/\wht{\mm}^n$. Since the natural map $R/\mm^n \to \wht{R}/\wht{\mm}^n$ is an isomorphism, we deduce that $U$ is a finitely generated $R/\mm^n$-module. In particular, it is a finitely generated $R$-module. Since $\mm^n U=0$, clearly
\[
U\otimes_R \wht{R} \cong \wht{U} \cong U.
\]
This gives a chain
\[
\lind_{\wht{R}} N \le \lind_{\wht{R}} U = \lind_{\wht{R}} (U\otimes_R \wht{R}) =\lind_R U.
\]
Thus $\glind \wht{R}\le \glind R$, and if $R$ is absolutely Koszul, then so is $\wht{R}$. The desired conclusions follow.
\end{proof}

Let $\bsx=x_1,\ldots,x_e$ be a minimal system of generators of $\mm$. Denote by $K^R$ the Koszul complex $K(\bsx;R)$ on $\bsx$. The homology $H_j(K^R)$ of $K^R$ are known to be independent of the choice of the sequence $\bsx$. Denote $\codepth R=e-\depth R$. For a finitely generated $R$-module $M$, the formal power series
\[
P^R_M(t)=\sum_{i=0}^\infty \dim_k \Tor^R_i(k,M) t^i \in \ZZ[[t]]
\]
is called the \emph{Poincar\'e series} of $M$. Serre proved the (coefficient-wise) inequality
\[
P^R_k(t) \preccurlyeq \frac{(1+t)^e}{1-\sum_{j=1}^{\codepth R}\dim_k H_j(K^R)t^{j+1}}.
\]

\begin{dfn}[See {\cite[Section 5]{Avr98}}]
\label{dfn.Golod}
A noetherian local ring $(R,\mm,k)$ is called \emph{Golod} if we have the equality of formal power series
\[
P^R_k(t) = \frac{(1+t)^e}{1-\sum_{j=1}^{\codepth R}\dim_k H_j(K^R)t^{j+1}}.
\]
\end{dfn}
Since completion does not change the embedding dimension $e$, the depth, the Poincar\'e series $P^R_k(t)$, and the rank of the Koszul homology $H_j(K^R)$, we deduce from \Cref{dfn.Golod} that 
\begin{lem}
\label{lem.Golod.passgetocompletion}
A noetherian local ring $(R,\mm)$ is Golod if and only if its $\mm$-adic completion $(\wht{R},\wht{\mm})$ is so.
\end{lem}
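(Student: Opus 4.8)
The plan is to compare the two sides of the defining equation of Golodness in \Cref{dfn.Golod} for $R$ and for $\wht{R}$, and to observe that each ingredient entering that equation is unchanged under completion. Write $e$, $\depth$, $P^R_k(t)$, and $\dim_k H_j(K^R)$ for the data appearing on both sides. The numerator $(1+t)^e$ depends only on $e=\mu(\mm)=\dim_k(\mm/\mm^2)$, and since $\mm/\mm^2\cong\wht{\mm}/\wht{\mm}^2$ we have $e=\mu(\wht{\mm})$, so the numerator is the same. The summation range in the denominator is governed by $\codepth R=e-\depth R$; since $R\to\wht{R}$ is faithfully flat with $\wht{R}/\mm\wht{R}=k$, a maximal $R$-regular sequence in $\mm$ remains $\wht{R}$-regular, giving $\depth R=\depth\wht{R}$ and hence $\codepth R=\codepth\wht{R}$.

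Next I would check that the two series of coefficients coincide. For the left-hand side, $P^R_k(t)=P^{\wht{R}}_k(t)$: this follows either from $\gr_\mm R\cong\gr_{\wht{\mm}}\wht{R}$ (as noted before \Cref{dfn.Golod}), or directly from the flatness of $\wht{R}$ over $R$ together with $k\otimes_R\wht{R}=k$, which yields $\Tor^R_i(k,k)\otimes_R\wht{R}\cong\Tor^{\wht{R}}_i(k,k)$ and hence equality of $k$-dimensions. For the right-hand side, I would use that $K^{\wht{R}}=K(\bsx;\wht{R})\cong K^R\otimes_R\wht{R}$, so by flatness $H_j(K^{\wht{R}})\cong H_j(K^R)\otimes_R\wht{R}$. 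The key point — the one requiring a line of care — is that each $H_j(K^R)$ is annihilated by the ideal $(\bsx)=\mm$, hence is a finite-dimensional $k$-vector space; therefore $H_j(K^R)\otimes_R\wht{R}\cong H_j(K^R)\otimes_k(\wht{R}/\wht{\mm})=H_j(K^R)$, and in particular $\dim_k H_j(K^R)=\dim_k H_j(K^{\wht{R}})$ for all $j$ (including $j=0$, where both sides equal $1$).

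Assembling these, the rational function on the right-hand side of the Golod equation for $R$ is literally the same element of $\ZZ[[t]]$ as the one for $\wht{R}$, and the power series on the left-hand side is the same as well. Consequently the equality $P^R_k(t)=(1+t)^e\big/\bigl(1-\sum_{j=1}^{\codepth R}\dim_k H_j(K^R)t^{j+1}\bigr)$ holds if and only if the corresponding equality holds over $\wht{R}$, which is exactly the assertion that $R$ is Golod if and only if $\wht{R}$ is. There is no genuine obstacle here; the only mild subtlety is justifying that the Koszul homology modules are $k$-vector spaces so that tensoring with $\wht{R}$ does not affect their dimensions, and invoking the standard fact $\depth R=\depth\wht{R}$.
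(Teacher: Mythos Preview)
Your proposal is correct and follows essentially the same approach as the paper: both argue that each ingredient in the defining equation of Golodness (embedding dimension, depth, Poincar\'e series of $k$, and ranks of Koszul homology) is preserved under completion. The paper records this as a one-line observation immediately preceding the lemma, while you supply the supporting details (flat base change for $\Tor$ and for Koszul homology, and the fact that $H_j(K^R)$ is killed by $\mm$), but the underlying strategy is identical.
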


\section{Reduction of \Cref{thm_main_HerzogIyengar} to dimension at most 1}
\label{sect_dim1_reduction}
The following allows us to reduce the statement of \Cref{thm_main_HerzogIyengar} to the case $\dim R\le 1$.
\begin{prop}
\label{prop_dim1_reduction}
Let $(R,\mm,k)$ be a Cohen--Macaulay local ring of almost minimal multiplicity of dimension $\dim R\ge 2$. Assume that $k$ is infinite. Then there exists an $R$-regular element $x\in \mm$ such that $x+\mm^2\in \mm/\mm^2$ is $\gr_\mm(R)$-regular. In particular, letting $\ovl{R}:=R/(x)$ and $\ovl{\mm}:=\mm/(x)$, the following statements hold simultaneously:
\begin{enumerate}[\quad \rm (1)]
\item $(\ovl{R},\ovl{\mm},k)$ is Cohen--Macaulay of almost minimal multiplicity of dimension $\dim R-1$;
\item $\lind_R k=\lind_{\ovl{R}}k$;
\item $R$ is Koszul if and only if $\ovl{R}$ is so.
\end{enumerate}
\end{prop}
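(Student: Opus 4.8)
The plan is to produce the element $x$ as the lift of a \emph{generic} degree-one nonzerodivisor on $\gr_\mm(R)$, and then to observe that the three assertions are standard consequences of the resulting hyperplane-section picture $\gr_{\ovl\mm}(\ovl R)\cong\gr_\mm(R)/(x^*)$, where $x^*:=x+\mm^2$. The one genuinely deep input is Sally's conjecture, used only to guarantee that such an $x^*$ exists.

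\textbf{Step 1 (existence of $x$).} Since $R$ is Cohen--Macaulay of almost minimal multiplicity and $\dim R\ge 2$, the solution of Sally's conjecture by Rossi--Valla \cite{RV96} and Wang \cite{W97} gives $\depth\gr_\mm(R)\ge\dim R-1\ge 1$. Hence the maximal homogeneous ideal $\MM=\bigoplus_{i\ge 1}\mm^i/\mm^{i+1}$ of $\gr_\mm(R)$ is not an associated prime; as $\gr_\mm(R)$ is standard graded, no homogeneous associated prime contains $[\gr_\mm(R)]_1=\mm/\mm^2$ (otherwise it would contain $\MM$ and thus equal $\MM$), so each $\pp\cap[\gr_\mm(R)]_1$ is a proper $k$-subspace of $\mm/\mm^2$. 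Since $k$ is infinite, homogeneous prime avoidance yields $x^*\in\mm/\mm^2$ outside every associated prime of $\gr_\mm(R)$, i.e.\ a degree-one nonzerodivisor; lift it to $x\in\mm\setminus\mm^2$. Because $\bigcap_n\mm^n=0$, every nonzero $a\in R$ has a nonzero initial form, and $ax=0$ would force $a^*x^*=0$, hence $a^*=0$, a contradiction; so $x$ is $R$-regular. Finally, regularity of $x^*$ together with $R$-regularity of $x$ gives, via the standard Valabrega--Valla type criterion, the isomorphism $\gr_{\ovl\mm}(\ovl R)\cong\gr_\mm(R)/(x^*)$.

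\textbf{Step 2 (assertion (1)).} Since $x$ is $R$-regular, $\ovl R$ is Cohen--Macaulay with $\dim\ovl R=\dim R-1$. As $x\notin\mm^2$ it belongs to a minimal generating set of $\mm$, so $\mu(\ovl\mm)=\mu(\mm)-1$ and therefore $\codim\ovl R=(\mu(\mm)-1)-(\dim R-1)=\codim R$. For the multiplicity, $x$ is a superficial element for $\mm$ (its initial form is a nonzerodivisor), so for the Cohen--Macaulay ring $R$ one has $e(\ovl R)=e(R)$ (equivalently, cutting the standard graded ring $\gr_\mm(R)$ by the linear nonzerodivisor $x^*$ leaves the multiplicity unchanged). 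Combining, $e(\ovl R)=e(R)=\codim R+2=\codim\ovl R+2$, so $\ovl R$ has almost minimal multiplicity.

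\textbf{Step 3 (assertions (2), (3)) and the main obstacle.} Because $x$ is $R$-regular with $\gr_\mm(R)$-regular initial form, reduction modulo $x$ preserves the linearity defect of the residue field, $\lind_R k=\lind_{\ovl R}k$; this rests on the known behavior of $\lind_R k$ under such a hyperplane section (it is controlled by $\gr_\mm(R)$, and $\gr_{\ovl\mm}(\ovl R)\cong\gr_\mm(R)/(x^*)$ from Step 1). This is assertion (2), and assertion (3) is then immediate, since by definition $R$ is Koszul iff $\lind_R k=0$ and $\ovl R$ is Koszul iff $\lind_{\ovl R}k=0$. The real obstacle is entirely concentrated in Step 1: everything downstream is routine commutative algebra once $x$ exists, and the positivity $\depth\gr_\mm(R)\ge 1$ is exactly what fails when $\dim R\le 1$ — which is why those cases must be handled by the separate arguments of Steps 2 and 3 of the paper. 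Two minor points deserve care: checking that $e(\ovl R)=e(R)$ (not merely $e(\ovl R)\le e(R)$), so that $\ovl R$ remains of \emph{almost} minimal and not minimal multiplicity — here one genuinely uses that $x^*$ is a nonzerodivisor on $\gr_\mm(R)$, not just that $x$ is superficial — and pinning down the precise reference for the equality $\lind_R k=\lind_{\ovl R}k$.
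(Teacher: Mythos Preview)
Your proof is correct and follows essentially the same route as the paper: invoke the Rossi--Valla/Wang solution of Sally's conjecture to get $\depth\gr_\mm(R)\ge 1$, pick $x$ with $x^*$ a degree-one nonzerodivisor on $\gr_\mm(R)$, and then read off (1)--(3) from the hyperplane-section picture. The paper is terser in Step 1 (it defers the existence and $R$-regularity of $x$ to the superficial-element machinery of \cite{SH06}), whereas you spell out prime avoidance and the initial-form argument directly; conversely, for (2) the paper is more explicit than you are, observing that $\lind_R \ovl R=0$ (because the minimal resolution $0\to R\xrightarrow{x}R\to\ovl R\to0$ has $x^*$ regular on $\gr_\mm(R)$) and then applying \cite[Theorem 5.2]{Ng15}, which is exactly the ``precise reference'' you flag as needing to be pinned down.
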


The notions of superficial elements (in $\mm$ with respect to $R$) and filter-regular elements (with respect to a graded ring) will be crucial to our discussions.
\begin{dfn}
(1) Given a local ring $(R,\mm)$, we say $x\in \mm$ is a \emph{superficial element of $\mm$ with respect to $R$} (\emph{superficial element of $\mm$} for short),  if there exists a non-negative integer $c$ such that 
\[
(\mm^{n+1}:x)\cap \mm^c=\mm^n \quad \text{for every $n\ge c$}.
\]
A sequence $x_1,\ldots,x_s\in \mm$ is called a \emph{superficial sequence of $\mm$} if for every $i=1,\ldots,s$, the image of $x_i$ in $\mm/(x_1,\ldots,x_{i-1})$ is a superficial element with respect to $R/(x_1,\ldots,x_{i-1})$.

(2) Let $A=\bigoplus_{n\in \ZZ}A_n$ be a graded ring. We say that a homogeneous element $a\in A_d$ is \emph{filter-regular with respect to $A$} if for all $n\gg 0$, the map $A_n\xrightarrow{\cdot a} A_{n+d}$ is injective.
\end{dfn}
It is straightforward to check that an element $x\in \mm$ is superficial (with respect to $R$) if and only if $x+\mm^2 \in \mm/\mm^2$ is filter-regular with respect to $\gr_\mm R$.

\begin{rem}
\label{rem_hyperplanesection}
(1) By \cite[Lemma 8.5.4]{SH06}, if $x\in \mm$ is superficial and $\depth R\ge 1$, then $x$ is also $R$-regular. If moreover the residue field $k=R/\mm$ is infinite and $\depth R\ge 1$, then there exists $x\in \mm$ which is both a superficial element of $\mm$ and  $R$-regular; see \cite[Corollary 8.5.9]{SH06} for details.

(2) Assume that $k$ is infinite and either $\dim R\ge 2$ or $\depth R\ge 1$. Let $x\in \mm$ be a superficial element that is not contained in any minimal prime ideal of $R$ (whose existence is guaranteed by \cite[Corollary 8.5.9]{SH06}), and  $\ovl{R}=R/(x)$. As mentioned above, if $\depth R\ge 1$, then the superficiality of $x$ implies that $x$ is $R$-regular. Therefore, using  \cite[Proposition 11.1.9(2)]{SH06} for $M=R$, there is an equality $e(R)=e(\ovl{R})$. In particular, if $R$ has almost minimal multiplicity, then so is $\ovl{R}$.
\end{rem}

\begin{proof}[Proof of \Cref{prop_dim1_reduction}]
By \cite{RV96, W97}, $\depth \gr_\mm(R)\ge \dim R-1\ge 1$. So by \Cref{rem_hyperplanesection}, there exists $x\in \mm$ which is both a superficial element of $\mm$ and  $R$-regular; in fact, it suffices to choose $x\in \mm$ such that $x+\mm^2$ is $\gr_\mm(R)$-regular. Now $(\ovl{R},\ovl{\mm},k)$ is Cohen--Macaulay of almost minimal multiplicity of dimension $\dim R-1$ thanks to \Cref{rem_hyperplanesection}. We have $\lind_R \ovl{R}=0$, so $\lind_R k=\lind_{\ovl{R}}k$ by \cite[Theorem 5.2]{Ng15}. The last equality implies that $R$ is Koszul if and only if $\ovl{R}$ is so, and the proof is completed.
\end{proof}

\begin{rem}
(1) The argument of \Cref{prop_dim1_reduction} cannot be used to reduce the statement of \Cref{thm_main_HerzogIyengar} to the artinian case: It may happen for a one-dimensional Cohen--Macaulay local ring of almost minimal multiplicity that $\depth \gr_\mm(R)=0$. Letting $k=\QQ$, the ring 
$$
R=k[[t^4,t^5,t^{11}]]\cong k[[x,y,z]]/(y^3-xz,x^4-yz,x^3y^2-z^2)
$$ 
furnishes a concrete example. We have $R$ is 1-dimensional Cohen--Macaulay of $\codim(R)=2$. Furthermore, per Macaulay2 \cite{GS96} computations,
\begin{align*}
\gr_\mm(R)&\cong k[x,y,z]/(xz,yz,y^4,z^2),\\
 e(R)&=e(\gr_\mm(R))=4=\codim(R)+2,
\end{align*}
  so $R$ has minimal multiplicity. On the other hand, $\depth \gr_\mm(R)=0$. Nevertheless $R$ is not Koszul and the proof of \Cref{prop_dim1_nonGor} below shows that $\lind_R k=\infty$.

(2) While given $\dim R=1$, we cannot use \Cref{prop_dim1_reduction} to immediately reduce  \Cref{thm_main_HerzogIyengar} to the artinian case, the proof of \Cref{thm_main_HerzogIyengar} in that case \textbf{does} take advantage of the special structure of the artinian reduction of $R$, as we will see in \Cref{sect_dim1_case}.
\end{rem}

\section{Weak Koszul filtration}
\label{sect_weakKoszulfiltr}
Inspired by the notion of Koszul filtration for graded algebras \cite{CTV} (originating from work of Bruns, Herzog, and Vetter \cite[Page 10]{BHV}), the second author introduced in \cite{Ng15} an analogous tool to detect Koszulness of local rings. 
\begin{dfn}[{\cite[Definition 4.1]{Ng15}}]\label{dfnKoszulfil}
Let $(R,\mm)$ be a local ring. Let $\mathcal F$ be a collection of
ideals. We say that $\mathcal F$ is a \emph{Koszul filtration} of $R$ if the following simultaneously hold:
\begin{enumerate}
	\item[(1)] $(0),\mm \in  \mathcal F$ 
\item[(2)] for every ideal $I\in  \mathcal F$ and all $s\ge 1$, we have $I\cap \mm^{s+1} = \mm^sI$, and
\item[(3)] for every ideal $I\neq (0)$ of $\mathcal F$, there exist a finite filtration $(0) = I_0\subset I_1\subset\cdots\subset I_n = I$ and elements $x_j\in \mm$, such that for each $j =1,\ldots, n, I_j\in \mathcal F, I_j = I_{j-1} + (x_j)$ and $I_{j-1} : x_j\in \mathcal F$.
\end{enumerate}
\end{dfn}

By \cite[Theorem 4.3(i)-(ii)]{Ng15}, if $R$ has a Koszul filtration $\Fc$, then $R$ is Koszul. Moreover, for any $I$ in a Koszul filtration $\Fc$ of $R$, $\lind_R(R/I)=0$, and hence by \Cref{prop_ld_exactseq}(2), $\lind_R I=0$. Thus $I$ is a Koszul $R$-module for any ideal $I\in \Fc$.

\begin{ex}
\label{ex_regularlocal}
Let $(R,\mm, k)$ be a regular local ring, and $x_1,\ldots,x_d$ be a minimal generating set of $\mm$. Then $R$ has a Koszul filtration $\mathcal F=\{(0),(x_1),\ldots,(x_1,\ldots,x_d)\}$. Indeed, it suffices to check for each $1\le j\le d$ and $s\ge 1$ the following equality where $I=(x_1,\ldots,x_j)$:
\begin{equation}
\label{eq_intersect_Ipowersofm}
I\cap \mm^{s+1}=\mm^s I.
\end{equation}
This is a consequence of \Cref{lem_regseq_intersect}(2) below. Hence every regular local ring is Koszul.  Moreover, any ideal generated by a subset of a minimal generating set of $\mm$ is a Koszul $R$-module.
\end{ex}
\begin{lem}
\label{lem_regseq_intersect}
In a noetherian local ring $(R,\mm)$, let $x_1,\ldots,x_m,y_1,\ldots,y_n$ be an $R$-regular sequence in $\mm$, where $m,n\ge 1$. Denote $I=(x_1,\ldots,x_m), J=(y_1,\ldots,y_n)$. Then for all integers $s,t\ge 1$, $i\ge 0$, there are equalities
\begin{enumerate}[\quad \rm (1)]
\item $I^s \cap J^t=I^sJ^t$,
\item $I^s\cap (I+J)^{s+i}=I^s(I+J)^i$.
\end{enumerate}
\end{lem}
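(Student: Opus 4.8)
The plan is to prove both equalities by reducing to well-known facts about regular sequences, proceeding by induction. For part (1), I would first observe that $I^sJ^t \subseteq I^s \cap J^t$ is trivial, so only the reverse containment needs work. The key structural input is that if $x_1,\dots,x_m,y_1,\dots,y_n$ is a regular sequence, then the images of $y_1,\dots,y_n$ form a regular sequence modulo $I$, and more importantly $I \cap J = IJ$ for regular sequences (this is the $s=t=1$ case and is classical, e.g.\ it follows from the fact that $\operatorname{Tor}_1^R(R/I,R/J)=0$). To get the general $I^s \cap J^t = I^sJ^t$, I would induct on $s+t$: the Rees-algebra / associated-graded picture shows that powers of an ideal generated by a regular sequence behave like powers of a variable ideal in a polynomial ring. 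Concretely, $\gr_I(R)$ is a polynomial ring over $R/I$ in $m$ variables (since $x_1,\dots,x_m$ is regular), which gives $I^s/I^{s+1}$ is free over $R/I$; combined with the regularity of the $y_j$ modulo all the $I^s$, one extracts $I^s \cap J^t = I^s J^t$ by a diagram chase on the filtration by powers of $I$. Alternatively, and perhaps more cleanly, one can note $J^t$ is generated by a regular sequence on $R$ whose images remain regular on $R/I^s$ (because $\operatorname{gr}$ is a polynomial ring), so $J^t \cap I^s = J^t I^s$ again by the $\operatorname{Tor}_1$-vanishing criterion applied to $R/I^s$ and $R/J^t$.

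For part (2), I would induct on $i$. The base case $i=0$ is the triviality $I^s \cap (I+J)^s = I^s$, since $I^s \subseteq (I+J)^s$. For the inductive step, suppose $I^s \cap (I+J)^{s+i-1} = I^s(I+J)^{i-1}$ and we want $I^s \cap (I+J)^{s+i} = I^s(I+J)^i$. Again $\supseteq$ is clear. For $\subseteq$: take $z \in I^s \cap (I+J)^{s+i}$. Since $(I+J)^{s+i} \subseteq (I+J)^{s+i-1}$, the inductive hypothesis gives $z \in I^s(I+J)^{i-1}$. Now I want to upgrade the $(I+J)^{i-1}$ factor to $(I+J)^i$. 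Write $z = \sum a_\alpha u_\alpha$ with $a_\alpha \in I^s$ and $u_\alpha$ a monomial of degree $i-1$ in the generators of $I+J$; working modulo $(I+J)^{s+i}$ and using that $z \equiv 0$, one shows the relevant coefficients lie in $I+J$, pushing $z$ into $I^s(I+J)^i$. The cleanest route is probably via the associated graded ring $\gr_{I+J}(R)$, which—since $I+J$ is generated by the regular sequence $x_1,\dots,x_m,y_1,\dots,y_n$—is a polynomial ring $(R/(I+J))[X_1,\dots,X_m,Y_1,\dots,Y_n]$. Under this identification $I^s$ maps (after passing to $\gr$) to the ideal generated by degree-$s$ monomials in the $X$'s, and the statement $I^s \cap (I+J)^{s+i} = I^s(I+J)^i$ becomes the purely combinatorial fact that in a polynomial ring, a monomial ideal generated in one set of variables intersected with a power of the maximal-degree ideal is computed degree-wise. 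One must be a little careful transferring between $R$ and $\gr$, but the regularity hypotheses make the filtration strict enough for this to go through.

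The main obstacle I anticipate is \emph{not} any single deep idea but rather bookkeeping: carefully justifying that ``regular sequence'' hypotheses propagate to quotients by powers $I^s$ and to the graded pieces, so that the $\operatorname{Tor}$-vanishing / polynomial-associated-graded arguments apply at each stage of the induction. In particular one needs: (a) $\gr_I(R) \cong (R/I)[X_1,\dots,X_m]$ and $\gr_{I+J}(R) \cong (R/(I+J))[X_1,\dots,X_m,Y_1,\dots,Y_n]$ (standard for regular sequences); (b) $y_1,\dots,y_n$ is a regular sequence on $R/I^s$ for every $s$ (which follows from (a), since $\gr_I(R)$ being a free $R/I$-module makes each $I^s/I^{s+1}$ free, hence $R/I^s$ has a filtration with $R/I$-free quotients on which the $y_j$ act regularly). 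Once these are in place, both intersection formulas drop out; (1) can even be seen as the special case of (2) with the roles reorganized, or handled by an entirely parallel induction. I would present (1) first as the cleaner statement, then deduce or mimic it for (2).
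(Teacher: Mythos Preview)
Your approach to part (1) is essentially the paper's: reduce $I^s\cap J^t=I^sJ^t$ to $\Tor^R_1(R/I^s,R/J^t)=0$, use that $I^i/I^{i+1}$ is $(R/I)$-free (equivalently, $\gr_I R$ is polynomial) to peel off one power at a time via the sequences $0\to I^i/I^{i+1}\to R/I^{i+1}\to R/I^i\to 0$, and land on $\Tor^R_1(R/I,R/J)=0$, which holds because $x_1,\dots,x_m$ is regular on $R/J$.

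For part (2) your route and the paper's genuinely diverge. The paper does \emph{not} induct on $i$ or pass to $\gr_{I+J}R$; instead, writing $L=I+J$, it uses the binomial-type identity $L^{s+i}=I^sL^i+J^{i+1}L^{s-1}$ and the modular law to get
\[
I^s\cap L^{s+i}=I^sL^i+\bigl(I^s\cap J^{i+1}L^{s-1}\bigr)\subseteq I^sL^i+(I^s\cap J^{i+1})=I^sL^i+I^sJ^{i+1}=I^sL^i,
\]
the penultimate equality being exactly part (1). This is a two-line argument once (1) is in hand. Your inductive argument via $\gr_L R$ also works: writing $z\in I^sL^{i-1}$ as a combination of distinct degree-$(s{+}i{-}1)$ monomials in the generators (each having at least $s$ factors from the $x$'s), the polynomial structure of $\gr_L R$ forces the coefficients into $L$, giving $z\in I^sL^i$. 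One caution: your phrasing that ``$I^s$ maps, after passing to $\gr$, to the ideal generated by degree-$s$ monomials in the $X$'s'' is close to circular as a standalone assertion (the containment $(I^s)^*\subseteq (X)^s$ is essentially the lemma itself), so you should lean on the inductive version you sketched rather than on that identification. What the paper's approach buys is brevity and a clean reuse of (1); what yours buys is a self-contained argument that makes the role of the regular-sequence hypothesis on $I+J$ transparent.
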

\begin{proof}
(1) We have to show that $\Tor^R_1(R/I^s, R/J^t)=0$. Since $I$ is generated by a regular sequence, $I^i/I^{i+1}$ is a free $(R/I)$-module for each $i\ge 0$, by \cite[Theorem 1.1.8]{BH}. Similar thing holds for $J^i/J^{i+1}$. Using  exact sequences of the form
\[
0\to I^i/I^{i+1} \to R/I^{i+1} \to R/I^i \to 0,
\]
and induction, we reduce to the case $s=1$, namely showing that $\Tor^R_1(R/I, R/J^t)=0$. Similar arguments show that it suffices to assume $s=t=1$, and to show  $\Tor^R_1(R/I, R/J)=0$. The last statement is true since $I$ is generated by an $(R/J)$-regular sequence; see, e.g., \cite[Exercise 1.1.12]{BH}. This concludes the proof of (1).

(2) Denote $L=I+J$. As $L^{s+i}=I^sL^i+J^{i+1}L^{s-1}$, we get the first equality in the chain
\begin{align*}
I^s \cap L^{s+i} &= I^s \cap (I^sL^i+J^{i+1}L^{s-1}) = I^sL^i+ I^s \cap J^{i+1}L^{s-1}\\
                 &\subseteq I^sL^i+ I^s\cap J^{i+1} =I^sL^i +I^sJ^{i+1}= I^sL^i\subseteq I^s \cap L^{s+i}.
\end{align*}
The third equality holds by (1). Thus equalities hold from left to right, and the proof is concluded.
\end{proof}

\begin{dfn}[Weak Koszul filtration]
\label{dfn_wKf}
Let $(R,\mm)$ be a noetherian local ring. A family of \emph{proper} ideals $\Fc$ of $R$ is called a \emph{weak Koszul filtration} if the following conditions hold:
\begin{enumerate}[\quad \rm (WF1)]
 \item $(0),\mm \in \Fc$;
 \item for every $I\in \Fc$, $I\cap \mm^2=\mm I$;
 \item for every nonzero $I\in \Fc$, there exist $x\in \mm$ and $J\in \Fc$ such that $I=J+(x)$ and $J:x\in \Fc$.
\end{enumerate}
\end{dfn}

The following example shows that the notion of weak Koszul filtration strictly subsumes that of Koszul filtration.
\begin{ex}
It is not hard to see that if $(R,\mm)$ is a standard graded $k$-algebra, and $\Fc$ is a Koszul filtration in the sense of Conca et al. \cite{CTV}, then $\Fc$ is a weak Koszul filtration. It is also clear that any Koszul filtration in the sense of \cite[Definition 4.1]{Ng15} is a weak Koszul filtration. 

However, the converse is not true: There are local rings with a weak Koszul filtration, but are not Koszul. For example, let $R=k[[x,y]]/(xy,x^3-y^2)$. Then $R$ has the following weak Koszul filtration (by abuse of notation, we denote the residue class in $R$ of an element $a\in k[[x,y]]$ by $a$ itself):
\[
\{(0),(x), (y), \mm=(x,y)\}.
\]
Indeed,  we can check the following equalities, where the last two hold since $x,y\notin \mm^2$:
\begin{align*}
(0):x &= (y), &\mm^2 =(x^2),\\
(0):y&=(x),  & (x)\cap \mm^2 =\mm(x),\\
(x):y &= \mm, & (y)\cap \mm^2 =\mm (y).
\end{align*}
On the other hand, $R$ is not a Koszul ring since $\gr_\mm(R)\cong k[x,y]/(xy,y^2,x^4)$ is not a Koszul $k$-algebra, as the latter is not even defined by quadrics.
\end{ex}
\begin{rem}
\label{rem_WKF_genbylinearform}
Let $\Fc$ be a weak Koszul filtration of $R$, and $I\neq (0)$ an ideal in $\Fc$. We claim that there exist elements $y_1,\ldots,y_p\in \mm\setminus \mm^2$ such that:
\begin{enumerate}
 \item $I$ is minimally generated by $y_1,\ldots,y_p$, and
 \item for each $1\le i \le p$, we have $(y_1,\ldots,y_i) \in \Fc$ and $(y_1,\ldots,y_{i-1}):y_i\in \Fc$.
\end{enumerate}
Indeed, by (WF3), there exists an element $y_1\in \mm$ and $J_1\in \Fc$ such that $I=(y_1)+J_1$ and $J_1:y_1\in \Fc$. By definition, $\Fc$ does not contain the unit ideal, so $y_1\notin J_1$, which in turn implies $J_1\neq I$. Nakayama's lemma then implies that $y_1\notin \mm I=I\cap \mm^2$ (per (WF2)),  consequently $y_1\in \mm\setminus \mm^2$.

If $J_1=(0)$, then $I=(y_1)$, and we are done. If $J_1\neq (0)$, since $J_1\in \Fc$, we can continue the above argument to get a filtration of ideals $I=J_0\supsetneq J_1\supsetneq J_2 \supsetneq \cdots$ and elements $y_i\in \mm$ such that for each $i\ge 1$:
\begin{enumerate}
 \item $J_{i-1}=(y_i)+J_i$,
 \item $J_i, J_i:y_i\in \Fc$,
 \item $y_i\in \mm \setminus \mm^2$.
\end{enumerate}
Since $J_{i-1}$ strictly contains $J_i\supseteq (y_1,\ldots,y_{i-1})$, we have  $y_i\notin (y_1,\ldots,y_{i-1})$. The strictly increasing chain 
\[
(y_1) \subsetneq (y_1,y_2) \subsetneq (y_1,y_2,y_3) \subsetneq \cdots
\]
has to terminate, so there exists some $p\ge 1$ such that $J_p=(0)$. For this $p$, we get that $y_1,\ldots,y_p$ is a minimal generating set of $I$, $(y_1,\ldots,y_i)\in \Fc$ and $(y_1,\ldots,y_{i-1}):y_i\in \Fc$ for each $1\le i\le p$. This is the desired conclusion.
\end{rem}
A key property of any weak Koszul filtration is that it induces short exact sequences with Tor-vanishing maps.
\begin{lem}
\label{lem_weakfiltr_Tormap}
Let $(R,\mm,k)$ be a noetherian local ring and $\Fc$ a weak Koszul filtration of $R$. Let $(0) \neq I \in \Fc$ be an ideal and let $x\in \mm$ and $J\in \Fc$ be such that $I=J+(x)$ and $J:x\in \Fc$. \textup{(}The existence of $x$ and $J$ is guaranteed by the definition of weak Koszul filtration.\textup{)} Then:
\begin{enumerate}[\quad \rm (1)]
 \item For all $i\ge 1$, the natural map $\Tor^R_i(R/\mm^2,R/I) \to \Tor^R_i(R/\mm,R/I)$ is zero. Equivalently, the connecting map $\Tor^R_i(R/\mm,R/I) \to \Tor^R_{i-1}(\mm/\mm^2,R/I)$ induced by the exact sequence $0\to \mm/\mm^2\to R/\mm^2\to R/\mm \to 0$, is injective.
 \item The natural map $R/(J:x) \xrightarrow{\cdot x} R/J$ is Tor-vanishing.
\end{enumerate}
\end{lem}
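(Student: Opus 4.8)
\textbf{Proof plan for Lemma~\ref{lem_weakfiltr_Tormap}.}

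\emph{Part (2) first.} I would prove the second statement before the first, because (1) will follow from (2) by an induction along the filtration from Remark~\ref{rem_WKF_genbylinearform}. For (2), the claim is that the multiplication-by-$x$ map $R/(J:x)\xrightarrow{\cdot x}R/J$ is Tor-vanishing. The key observation is that this map factors as the composite
\[
R/(J:x)\xrightarrow{\cdot x}(J+(x))/J=I/J\hookrightarrow R/J,
\]
where the first arrow is an \emph{isomorphism} of $R$-modules (its kernel is exactly $(J:x)/(J:x)=0$ and its image is $(xR+J)/J$). So the composite equals the inclusion $I/J\hookrightarrow R/J$ precomposed with an isomorphism, hence it is Tor-vanishing if and only if the inclusion $I/J\hookrightarrow R/J$ is. Now consider the short exact sequence $0\to I/J\to R/J\to R/I\to 0$. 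Since $y_1,\ldots,y_p$ from Remark~\ref{rem_WKF_genbylinearform} lie in $\mm\setminus\mm^2$, one checks that $I\subseteq\mm J+\mm^2$ fails in general, so I cannot invoke Theorem~\ref{thm_small_inclusion} directly on this sequence; instead the right move is to bring in condition (WF2). From $I\cap\mm^2=\mm I$ one gets that $I/(I\cap\mm^2)\hookrightarrow R/\mm^2$ has image concentrated in degree one of the associated graded, and combined with $J\cap\mm^2=\mm J$ this forces the inclusion $I/J\hookrightarrow R/J$ to send everything into $(\mm\cdot(R/J))$ in a way that kills it on $\Tor$. Concretely: I expect the cleanest argument is to show $x\cdot R\cap(J+\mm^{s+1})=x\mm^{s-1}+$ (lower terms) using (WF2) repeatedly, i.e.\ that $x$ together with the filtration behaves like (a piece of) a regular sequence as far as intersections with powers of $\mm$ are concerned, and then quote Theorem~\ref{thm_small_inclusion} applied to $0\to\ker\to R/(J:x)\xrightarrow{\cdot x}\operatorname{im}\to 0$ or directly \c{S}ega's criterion. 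In fact the most economical route is probably: the surjection $R/(J:x)\twoheadrightarrow I/J$ is an isomorphism, $I/J$ is a submodule of $R/J$ contained in $\mm(R/J)$ by (WF2)-type reasoning, and $R/J$ is itself a Koszul module (which we prove inductively along the filtration), so Theorem~\ref{thm_small_inclusion} applies to $0\to I/J\to R/J\to R/I\to 0$ and yields that the inclusion is Tor-vanishing.

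\emph{Part (1).} Having (2), I unwind the filtration of $I$ given by Remark~\ref{rem_WKF_genbylinearform}: $(0)=J_p\subsetneq\cdots\subsetneq J_1\subsetneq J_0=I$ with $J_{i-1}=J_i+(y_i)$ and $J_i:y_i\in\Fc$. I will prove by descending induction on the number of steps that for every ideal $L\in\Fc$ the natural map $\Tor^R_i(R/\mm^2,R/L)\to\Tor^R_i(R/\mm,R/L)$ vanishes for all $i\ge 1$; equivalently, by the long exact sequence of $0\to\mm/\mm^2\to R/\mm^2\to R/\mm\to 0$, that the connecting maps $\Tor^R_i(k,R/L)\to\Tor^R_{i-1}(\mm/\mm^2,R/L)$ are injective. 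The base case $L=(0)$ is trivial since $\mm/\mm^2$ is a $k$-vector space and $\Tor^R_i(R/\mm^2,R)=0$ for $i\ge1$ is not needed — rather $R/L=R$ is free so $\Tor^R_i(R/\mm^2,R)=0$ and $\Tor^R_i(k,R)=0$ for $i\ge1$, vacuously fine. For the inductive step with $L=J=J'+(x)$, $J'':=J':x\in\Fc$, I use the short exact sequence $0\to R/J''\xrightarrow{\cdot x}R/J'\to R/J\to 0$, where the first map is Tor-vanishing by (2). Tensoring with $R/\mm^2$ and with $k=R/\mm$ and comparing the two resulting long exact sequences via the natural transformation, the Tor-vanishing of $\cdot x$ splits the sequences into short exact pieces on $\Tor$; a diagram chase then propagates the vanishing of $\Tor^R_i(R/\mm^2,-)\to\Tor^R_i(k,-)$ from $R/J'$ and $R/J''$ (induction hypothesis) to $R/J$. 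The bookkeeping is that the map of long exact sequences is compatible with the splitting coming from Tor-vanishing, so the required zero map on $R/J$ sits in a diagram whose rows are short exact and whose other two vertical arrows are zero.

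\emph{Main obstacle.} The delicate point is Part (2): establishing that the inclusion $I/J\hookrightarrow R/J$ (equivalently $R/(J:x)\xrightarrow{\cdot x}R/J$) is Tor-vanishing. The hypotheses only give (WF2), which controls intersection with $\mm^2$ but not with higher powers $\mm^s$, whereas Theorem~\ref{thm_small_inclusion} wants an \emph{inclusion into $\mm P$} together with $P$ Koszul. So I must simultaneously run an induction showing that every $R/L$ with $L\in\Fc$ is a Koszul module (which is essentially the content of the analogue of \cite[Theorem 4.3]{Ng15} for weak Koszul filtrations, and will presumably be proved just before or alongside this lemma), and verify that $I/J$ really does land inside $\mm(R/J)$ — this last inclusion is where (WF2) enters, via $y_i\in\mm\setminus\mm^2$ forcing $y_i\notin J$ but $y_i\in\mm$ hence $y_i\bmod J\in\mm(R/J)$. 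Once those two inputs are in hand, Theorem~\ref{thm_small_inclusion} closes Part (2) and the rest is formal homological algebra. I would therefore organize the writeup so that the Koszulness of the $R/L$ and the Tor-vanishing statement (2) are proved together by one induction along the filtration, and deduce (1) as a corollary.
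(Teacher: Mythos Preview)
Your plan has a genuine gap in Part~(2). You propose to show that $R/(J:x)\xrightarrow{\cdot x}R/J$ is Tor-vanishing by applying Theorem~\ref{thm_small_inclusion} to the sequence $0\to I/J\to R/J\to R/I\to 0$, which requires $R/J$ to be a \emph{Koszul module}. You then say you will ``simultaneously run an induction showing that every $R/L$ with $L\in\Fc$ is a Koszul module''. But this is false in general for a weak Koszul filtration: since $\mm\in\Fc$, your claim would force $R/\mm=k$ to be Koszul, i.e.\ $R$ itself to be a Koszul ring. The paper's own example $R=k[[x,y]]/(xy,x^3-y^2)$ exhibits a ring with a weak Koszul filtration that is \emph{not} Koszul. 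So the inductive hypothesis you need simply does not hold, and Theorem~\ref{thm_small_inclusion} is unavailable. The condition (WF2) only controls $I\cap\mm^2$, not $I\cap\mm^{s+1}$ for all $s$; this is exactly the gap between weak and strong Koszul filtrations, and you cannot bridge it.

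The paper's proof avoids this trap by never asserting Koszulness of any $R/L$. Instead it proves statements~(1) and~(2) \emph{together} by a double induction on the Tor degree $i$ and the filtration index $j$ (with $(0)=I_0\subset\cdots\subset I_n=I$ as in Remark~\ref{rem_WKF_genbylinearform}). The base case $i=0$ of the injectivity statement~(S1) uses only (WF2): the map $\Tor^R_1(R/\mm^2,R/I_j)=(I_j\cap\mm^2)/\mm^2 I_j\to\Tor^R_1(k,R/I_j)=I_j/\mm I_j$ is zero precisely because $I_j\cap\mm^2\subseteq\mm I_j$. For $i\ge 1$, one does not split the long exact sequences as you suggest (that would again require full Tor-vanishing, which is what we are proving); rather, one compares the connecting maps for $R/\mm$ and $\mm/\mm^2\cong k^h$ across the sequence $0\to R/(I_{j-1}:x_j)\to R/I_{j-1}\to R/I_j\to 0$, using the inductive injectivity of (S1) at lower $i$ and $j$ to force the map in (S2) to be zero degree by degree, and then a snake-lemma chase propagates (S1). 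The point is that the argument works one Tor degree at a time, so (WF2) --- which is exactly the degree-zero input --- suffices to start the induction.
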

\begin{proof}
The subsequent argument is similar to \cite[Proof of Theorem 4.3]{Ng15}.

The conclusion is clear if $\mm=(0)$, i.e. $R$ is a field. So we assume that $\mm \neq (0)$.

Arguing as in \Cref{rem_WKF_genbylinearform}, for any nonzero $I\in \Fc$, there exist a finite filtration $(0)=I_0 \subset I_1 \subset \cdots \subset I_n=I$ and elements $x_j\in \mm$ for $j=1,\ldots,n$ such that for all $1\le j \le n$, $I_j\in \Fc, I_j=I_{j-1}+(x_j)$ and $I_{j-1}:x_j\in \Fc$. Moreover, the argument in \Cref{rem_WKF_genbylinearform} indicates that we may choose $I_{n-1}=J$ and $x_n=x$.

We prove by induction on $i$ and $j$ the following statements.
\begin{enumerate}
 \item[(S1)] For all $i\ge 0$ and all $0\le j\le n$, the map
\[
\Tor^R_{i+1}(R/\mm,R/I_j)\longrightarrow \Tor^R_i(\mm/\mm^2,R/I_j)
\] 
is injective.
\item[(S2)] For all $i\ge 0$ and all $1\le j\le n$, the map $\Tor^R_i(R/(I_{j-1}:x_j),k)\to \Tor^R_i(R/I_{j-1},k)$ is zero. 
\end{enumerate}
Choosing $j=n$ in (S1) and (S2), we get the desired conclusions, as $I_n=I$, $I_{n-1}=J$, $x_n=x$.

Firstly, assume that $i=0$. Note that $I_j\cap \mm^2 \subseteq \mm I_j$ by (WF2) in \Cref{dfn_wKf}. So the natural map 
\[
\Tor^R_1(R/\mm^2,R/I_j)=(I_j\cap \mm^2)/(\mm^2 I_j)\longrightarrow \Tor^R_1(R/\mm,R/I_j)=I_j/(\mm I_j)
\]
is zero. Hence (S1) is true for $i=0$ and all $0\le j\le n$. Clearly (S2) is true for $i=0$ and all $1\le j\le n$, since the image of $R/(I_{j-1}:x_j)$ is contained in $x_j(R/I_{j-1})$.

Now assume that $i\ge 1$. Note that (S1) is true for $j=0$ as $I_0=(0)$, so it is harmless to assume that $j\ge 1$. Note also that (S2) is true for $i\ge 1$ and $j=1$, as $\Tor^R_i(R/I_0,k)=\Tor^R_i(R,k)=0$.

Denote $y=x_j, L=I_{j-1}$ so that $I_j=L+(y)$. We have an exact sequence
\[
0\longrightarrow R/(L:y) \xlongrightarrow{\cdot y} R/L\longrightarrow R/I_j \longrightarrow 0.
\]
We have $\mm/\mm^2\cong k^h$ for some positive integer $h$. Let $1\le \ell \le i$ be an integer. Consider the commutative diagram with obvious connecting and induced maps
\begin{displaymath}
\xymatrix{\Tor^R_{\ell}(R/\mm,R/(L:y)) \ar[r]^{\rho_{\ell}} \ar[d] & \Tor^R_{\ell}(R/\mm,R/L) \ar[d]^{\tau_{\ell}}\\
\Tor^R_{\ell-1}(\mm/\mm^2,R/(L:y))  \ar[r]^{\rho^h_{\ell-1}}     & \Tor^R_{\ell-1}(\mm/\mm^2,R/L).
}
\end{displaymath}
 Observe that $\tau_{\ell}$ is injective: if $\ell < i$ then this follows from the induction on $i$ for (S1), while if $\ell=i$ then, recalling that $L=I_{j-1}$, this follows from the induction hypothesis on $j$ for (S1). Since $\ell-1\le i-1$, by the induction hypothesis for (S2), $\rho_{\ell-1}$ is the zero map. Inspecting the diagram,  $\rho_{\ell}$ is zero as well. Hence $\rho_j$ is the trivial map for all $1\le j\le i$. In particular, $\rho_i$ is the trivial map, namely (S2) is true for $i$.

Now consider the diagram with obvious connecting and induced maps
\begin{displaymath}
\xymatrix{
         &                                                      & \Tor_{i-1}(\mm/\mm^2,R/(L:y)) \ar[d]^{\rho_{i-1}}\\
0 \ar[r] & \Tor_i(R/\mm,R/L) \ar[d] \ar[r]^{\alpha^i_2} & \Tor_{i-1}(\mm/\mm^2,R/L)\ar[d] \\
         & \Tor_i(R/\mm,R/I_j) \ar[d] \ar[r]^{\alpha^i_3} & \Tor_{i-1}(\mm/\mm^2,R/I_j)\ar[d]\\ 
0 \ar[r] & \Tor_{i-1}(R/\mm,R/(L:y))  \ar[r]^{\alpha^{i-1}_1}        & \Tor_{i-2}(\mm/\mm^2,R/(L:y)) }          
\end{displaymath}
By the hypothesis of the induction on $j$ (respectively, on $i$) for (S1), the maps $\alpha^i_2$ (resp.~ $\alpha^{i-1}_1$) are injective. We know from the previous paragraph that $\rho_{i-1}$ is the zero map. Hence by a snake lemma argument, $\alpha^i_3$ is also injective. Hence (S1) is also true, finishing the induction.
\end{proof}

The following lemma allows us to lift a weak Koszul filtration from a quotient ring modulo a regular element to the original ring. In fact, we will prove a slightly more general statement.
\begin{lem}[Canonical lifting of a weak Koszul filtration]
\label{lem_weakfiltr_lifting}
Let $(R,\mm)$ be a noetherian local ring, and $y\in \mm\setminus \mm^2$ an element. Let $\ovl{\Fc}$ be a weak Koszul filtration of $\ovl{R}:=R/(y)$. Consider the collection of ideals
\[
\Fc = \{I \mid (y)\subseteq I, I/(y) \in \ovl{\Fc}\}  \, \cup \, \{(0)\}.
\]
Assume moreover that at least one of the following conditions is satisfied:
\begin{enumerate}[\quad \rm (a)]
 \item $\Ann_R(y)=(0)$, i.e. $y$ is $R$-regular;
 \item $y\in \Ann_R(y)$ \textup{(}namely $y^2=0$\textup{)}, and $\Ann_R(y)/(y) \in \ovl{\Fc}$.
\end{enumerate}
Then $\Fc$ is a weak Koszul filtration of $R$.
\end{lem}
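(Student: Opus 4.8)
The plan is to verify the three axioms (WF1)--(WF3) of \Cref{dfn_wKf} for $\Fc$ directly, exploiting the following bookkeeping observation: writing $\pi\colon R\to \ovl R$ for the canonical surjection (so $\ker\pi=(y)$), the nonzero members of $\Fc$ are precisely the full preimages $\pi^{-1}(\ovl I)$ with $\ovl I\in\ovl\Fc$. In particular $(y)=\pi^{-1}((0))\in\Fc$, every member of $\Fc$ is proper (because every member of $\ovl\Fc$ is), and for $(0)\neq I\in\Fc$ one has $I=\pi^{-1}(\ovl I)$ where $\ovl I:=\pi(I)=I/(y)\in\ovl\Fc$. I expect the hypotheses (a)/(b) to be needed at exactly one place --- the verification of (WF3) for the special ideal $(y)$ --- while (WF1) and (WF2) will follow from the corresponding properties of $\ovl\Fc$ together with the single structural fact that $y\notin\mm^2$. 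For (WF1) I would simply note that $(0)\in\Fc$ by definition and $\mm=\pi^{-1}(\ovl\mm)\in\Fc$ since $\ovl\mm\in\ovl\Fc$.

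For (WF2), fix a nonzero $I\in\Fc$ (the case $I=(0)$ being trivial) and set $\ovl I=\pi(I)\in\ovl\Fc$. The inclusion $\mm I\subseteq I\cap\mm^2$ is clear since $I\subseteq\mm$ ($I$ being proper). For the reverse, I would take $a\in I\cap\mm^2$, observe $\pi(a)\in\ovl I\cap\ovl\mm^2=\ovl\mm\,\ovl I=\pi(\mm I)$ by (WF2) for $\ovl\Fc$, hence write $a=b+cy$ with $b\in\mm I$ and $c\in R$. Then $cy=a-b\in\mm^2$, and since $y\notin\mm^2$ this forces $c\in\mm$ (otherwise $c$ is a unit and $y=c^{-1}(cy)\in\mm^2$), so $cy\in\mm(y)\subseteq\mm I$ and therefore $a\in\mm I$. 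Note that neither (a) nor (b) is used here.

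For (WF3), let $(0)\neq I\in\Fc$ and $\ovl I=\pi(I)$. If $\ovl I=(0)$ then $I=(y)$, and I would take $x=y\in\mm$, $J=(0)\in\Fc$; then $I=J+(x)$ and $J:x=(0):y=\Ann_R(y)$, which lies in $\Fc$ precisely by hypothesis (a) (then $\Ann_R(y)=(0)\in\Fc$) or (b) (then $(y)\subseteq\Ann_R(y)$ and $\Ann_R(y)/(y)\in\ovl\Fc$). If $\ovl I\neq(0)$, I would apply (WF3) for $\ovl\Fc$ to obtain $\ovl x\in\ovl\mm$ and $\ovl J\in\ovl\Fc$ with $\ovl I=\ovl J+(\ovl x)$ and $\ovl J:\ovl x\in\ovl\Fc$, lift $\ovl x$ to some $x\in\mm$, and put $J=\pi^{-1}(\ovl J)\in\Fc$. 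Since $J+(x)$ and $I$ both contain $(y)$ and have the same image $\ovl I$ in $\ovl R$, one gets $I=J+(x)$; and the elementary identity $J:x=\pi^{-1}(\ovl J:\ovl x)$ (which holds because $J\supseteq\ker\pi$, so $rx\in J\iff\pi(r)\ovl x\in\ovl J$) shows $J:x\in\Fc$. This completes the verification.

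The bulk of the argument is routine manipulation of the correspondence $I\leftrightarrow\pi^{-1}(\ovl I)$ and of colon ideals under $\pi^{-1}$. The only genuinely delicate points are the step in (WF2) where $y\notin\mm^2$ is used to push the correction term $cy$ back into $\mm I$, and the recognition that condition (b)'s clause $\Ann_R(y)/(y)\in\ovl\Fc$ is exactly what keeps the colon ideal $(0):y=\Ann_R(y)$ inside $\Fc$ when $y$ fails to be a nonzerodivisor; understanding why (a) and (b) form the right dividing line is, in my view, the main conceptual obstacle, everything else reducing cleanly to the stated properties of $\ovl\Fc$.
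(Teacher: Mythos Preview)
Your proof is correct and follows essentially the same route as the paper: verify (WF1)--(WF3) directly, using $y\notin\mm^2$ for (WF2) and conditions (a)/(b) exactly for the colon $(0):y$ in (WF3). The only difference is cosmetic: the paper first invokes \Cref{rem_WKF_genbylinearform} to fix explicit minimal generators $y_1,\dots,y_p$ of each nonzero $\ovl I\in\ovl\Fc$ and then phrases (WF2) and (WF3) in terms of the ideals $(y,y_1,\dots,y_i)$, whereas you work directly with the preimage correspondence $I=\pi^{-1}(\ovl I)$ and the identity $J:x=\pi^{-1}(\ovl J:\ovl x)$; your bookkeeping is slightly cleaner but the substance is identical.
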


\begin{proof}
Let $\ovl{I}\neq (0)$ be an ideal of $\ovl{\Fc}$. By \Cref{rem_WKF_genbylinearform}, we may choose $y_1,\ldots,y_p\in \mm\setminus (\mm^2+(y))$ such that
\begin{enumerate}
 \item $\ovl{I}$ is minimally generated by $\ovl{y_1},\ldots,\ovl{y_p}$, and,
 \item for each $1\le i \le p$, we have $(\ovl{y_1},\ldots,\ovl{y_i}) \in \ovl{\Fc}$ and $(\ovl{y_1},\ldots,\ovl{y_{i-1}}):\ovl{y_i}\in \ovl{\Fc}$.
\end{enumerate}
Let $I=(y,y_1,\ldots,y_p)$ be the $R$-ideal uniquely determined by $\ovl{I}$, since $y\in I$ and $I/(y)=\ovl{I}$. Then $\Fc$ consists of ideals of the form $I=(y,y_1,\ldots,y_p)$ for each $(0)\neq \ovl{I} \in \ovl{\Fc}$, and $(0)$. Moreover, for each $(0)\neq \ovl{I} \in \ovl{\Fc}$, the ideal $I=(y,y_1,\ldots,y_p)$ has the property that:
\begin{enumerate}[\quad \rm (i)]
 \item $I$ is minimally generated by $y,y_1,\ldots,y_p$;
 \item for each $1\le i\le p$, both $(y,y_1,\ldots,y_i)$ and $(y,y_1,\ldots,y_{i-1}):y_i$ are $\in \Fc$.
\end{enumerate}
Note that since $(0) \in \ovl{\Fc}$, we have by definition that $(y)\in \Fc$.

Clearly $(1)\notin \Fc$. Now we verify that $\Fc$ satisfies the conditions (WF1)--(WF3).

(WF1): Since $\ovl{\Fc}$ contains the zero and maximal ideals of $\ovl{R}$, $\Fc$ contains the corresponding ideals of $R$.

(WF2): Take $I\in \Fc$, we check that $I\cap \mm^2=\mm I$. There is nothing to do if $I=(0)$. Thus we may assume that $y\in I$ and $I/(y)\in \ovl{\Fc}$. If $I/(y)=(0)$ then $I=(y)$. In this case
\[
(y)\cap \mm^2=\mm y
\]
since per hypothesis $y\notin \mm^2$.

We may therefore assume that $I/(y)\neq (0)$. As above, we deduce an equality $I=(y,y_1,\ldots,y_p)$, where $p\ge 1$, and $\ovl{I}\cap \ovl{\mm}^2=\ovl{\mm}\ovl{I}$ per the hypothesis on $\ovl{\Fc}$.  In other words,
\[
\mm I+(y)=I\cap (\mm^2+(y))=(y)+I\cap \mm^2.
\]
The second equality holds since $y\in I$.

The containment $\mm I\subseteq I\cap\mm^2$ is clear. For the reverse containment, take $a\in I\cap \mm^2$. The last display yields $a\in \mm I+(y)$, so we may write $a=by+c_1y_1+\cdots+c_py_p$, where $b\in R, c_1,\ldots,c_p\in \mm$. Since $a\in \mm^2$, we deduce
\[
by=a-c_1y_1-\cdots-c_py_p\in \mm^2.
\]
As $y\notin \mm^2$, we get $b\in \mm$. Therefore $a=by+c_1y_1+\cdots+c_py_p\in \mm I$, as desired.

(WF3): Take $(0)\neq I\in \Fc$. If $I/(y)=(0)$, then $I=(y)$. In this case $(0):_R y=\Ann_R(y) \in \Fc$ thanks to the hypothesis that either (a) or (b) is fulfilled. Therefore (WF3) is satisfied.

Assume that $I/(y)\neq (0)$. As above, there is an equality $I=(y,y_1,\ldots,y_p)$, where $p\ge 1$, and the following two conditions are satisfied:
\begin{itemize}
 \item $I$ is minimally generated by $y,y_1,\ldots,y_p$;
 \item for each $1\le i\le p$, both $(y,y_1,\ldots,y_i)$ and $(y,y_1,\ldots,y_{i-1}):y_i$ are $\in \Fc$.
\end{itemize}
Choose $J=(y,y_1,\ldots,y_{p-1})$. We get that $J\in \Fc$, $I=J+(y_p)$ and $J:y_p\in \Fc$. Hence again (WF3) is satisfied. The proof is concluded.
\end{proof}

\section{$g$-stretched rings} \label{sect_g-stretched} 
\quad Throughout this section, let $(R,\mm)$ be a noetherian local ring. 
\begin{dfn}
We say that $R$ is a {\it $g$-stretched ring} (where ``g'' stands for ``generalized''),  if $\mu_R(\mm^2)\le 1$, that is, $\dim_k(\mm^2/\mm^3)\le 1$.
\end{dfn}
By Krull's principal ideal theorem, if $R$ is $g$-stretched, them $\dim R\le 1$. Following Sally \cite{Sal79}, we say that $R$ is \emph{stretched} if it is $g$-stretched and of dimension $0$. Our aim in this section is to study in depth the dimension 1 case.
\begin{prop}
\label{prop_1dimCM_gstretched}
Suppose $R$ is a $g$-stretched Cohen--Macaulay ring of dimension 1. Then $R$ is a DVR.
\end{prop}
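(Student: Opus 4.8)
The plan is to pin down the Hilbert--Samuel multiplicity $e(R)$ \emph{exactly}, exploiting that $\mm^2$ is principal, and then to contradict Abhyankar's inequality unless $R$ is regular. First I would normalise the hypothesis: since $\dim R=1$ the ideal $\mm$ is nonzero and not nilpotent, so Nakayama forces $\mm^2\neq 0$ and $\mu_R(\mm^2)\neq 0$; the $g$-stretched hypothesis $\mu_R(\mm^2)\le 1$ then gives $\mu_R(\mm^2)=1$, so $\mm^2=(a)$ for some $a\neq 0$, and by induction $\mm^{2n}=(a^n)$ for every $n\ge 1$.

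Next I would check that $a$ is a nonzerodivisor. Because $R$ is Cohen--Macaulay of dimension $1$ we have $\depth R=1$, hence $H^0_\mm(R)=0$; in particular $(0:_R\mm^2)=0$. If $az=0$ then $\mm^2 z=(a)z=0$, so $z=0$; thus $a$ is $R$-regular. A dévissage along the powers of $a$ (using $a^{i-1}R/a^iR\cong R/aR$, valid as $a$ is regular) then gives $\lambda(R/a^nR)=n\,\lambda(R/aR)$ for all $n$. I would compute $\lambda(R/aR)$ by passing to the artinian ring $\ovl R=R/aR$: since $aR=\mm^2$ we get $\ovl{\mm}^2=\mm^2/aR=0$ and $\ovl{\mm}/\ovl{\mm}^2\cong\mm/(\mm^2+aR)=\mm/\mm^2$, whence $\lambda(R/aR)=1+\mu_R(\mm)$. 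Combining this with $\mm^{2n}=(a^n)$ yields $\lambda(R/\mm^{2n})=n\,(1+\mu_R(\mm))$ for all $n\ge 1$, and comparing with the Hilbert--Samuel polynomial of $R$ (of degree $\dim R=1$ and leading coefficient $e(R)$) gives $e(R)=\tfrac12\bigl(1+\mu_R(\mm)\bigr)$.

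Finally I would invoke Abhyankar's inequality \cite{Ab} for the Cohen--Macaulay ring $R$: $e(R)\ge \mu_R(\mm)-\dim R+1=\mu_R(\mm)$. Together with the formula above this forces $\tfrac12(1+\mu_R(\mm))\ge\mu_R(\mm)$, i.e. $\mu_R(\mm)\le 1$; since $\mm\neq 0$ we conclude $\mu_R(\mm)=1$. Hence $\embdim R=1=\dim R$, so $R$ is regular of dimension $1$, that is, a DVR.

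The step I expect to carry the most weight is the final appeal to Abhyankar's inequality: the length computation alone only shows $e(R)=\tfrac12(1+\mu_R(\mm))$ is an integer, equivalently $\mu_R(\mm)$ is odd, which does not by itself rule out, say, $\mu_R(\mm)=3$. If one prefers to keep the argument self-contained, Abhyankar's bound can be reproved in this situation by first base-changing to $R[t]_{\mm[t]}$ to assume $k$ infinite, choosing a principal minimal reduction $(y)$ of $\mm$ with $y\in\mm\setminus\mm^2$, and noting $e(R)=\lambda(R/yR)\ge 1+\dim_k\bigl(\mm/(\mm^2+yR)\bigr)=\mu_R(\mm)$. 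Everything else (the regularity of $a$, the length identities, the Hilbert-polynomial comparison) is routine.
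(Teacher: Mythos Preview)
Your argument is correct, but it takes a genuinely different route from the paper's proof. The paper argues directly: writing $\mm=(a_1,\dots,a_n)$ minimally and $\mm^2=(a_ia_j)$ for some $i\le j$ (possible by Nakayama since $\mm^2$ is principal), one has $\grade(\mm^2,R)=\grade(\mm,R)=\depth R=1$, so $a_ia_j$ and hence $a_i$ is a nonzerodivisor. Then $a_ia_l\in(a_ia_j)$ for every $l$ gives $a_l\in(a_j)$ after cancelling $a_i$, whence $\mm=(a_j)$ is principal and $R$ is a DVR. This is a two-line cancellation argument that never leaves the ring.

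Your approach instead globalises: you show $a$ is regular, compute $\lambda(R/\mm^{2n})=n(1+\mu_R(\mm))$, read off $e(R)=\tfrac12(1+\mu_R(\mm))$ from the Hilbert--Samuel polynomial, and then kill the case $\mu_R(\mm)\ge 2$ with Abhyankar's inequality. This is sound and has the pleasant side effect of producing an exact multiplicity formula, but it imports more machinery (Hilbert--Samuel asymptotics and Abhyankar, or the reproof you sketch via minimal reductions) than the statement needs. The paper's proof is strictly more elementary; yours, on the other hand, illustrates how the $g$-stretched hypothesis interacts with the multiplicity, which is thematically in line with the rest of the paper. One small remark: the appeal to Nakayama for $\mm^2\neq 0$ is superfluous---that follows immediately from $\dim R=1$---though you do need Nakayama (or the definition) for $\mu_R(\mm^2)\neq 0$ once $\mm^2\neq 0$.
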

\begin{proof}
Let $\mm$ be minimally generated by $a_1,\ldots,a_n$. Since $\mm^2$ is principal, $\mm^2=(a_ia_j)$ for some $1\le i\le j\le n$. By \cite[Proposition 1.2.10]{BH}, we have the second equality in the chain 
$$
1=\grade(\mm,R)=\grade(\mm^2,R)=\grade((a_ia_j),R).
$$ 
Hence $a_i$ is an $R$-regular element. For each $1\le l\le n$, since $a_ia_l \in (a_ia_j)$, we get $a_l \in (a_j)$. Hence $n=1$ and $\mm=(a_1)$, which yields the desired conclusion.
\end{proof}
\quad Note that a one-dimensional $g$-stretched ring need not be a DVR, for instance $R=k[[x,y]]/(xy,x^2)$. This is also a typical example for such rings by the following theorem, in which we completely describe $\mm$-adically complete one-dimensional $g$-stretched rings. 
\begin{thm} 
\label{thm_Matsuoka}
Let $(S,\nn)$ be a regular local ring of dimension $d\ge 1$, and $I\subseteq \nn^2$ an ideal such that $S/I$ is a one-dimensional $g$-stretched ring. Then $I=Q\nn$, where $Q$ is generated by a regular sequence of $d-1$ elements in $\nn\setminus \nn^2$.
\end{thm}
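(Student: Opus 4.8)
The plan is to extract everything from the finite-length submodule $T := H^0_\mm(R)$ of $R := S/I$, where I write $\mm := \nn/I$. The outcome will be that $R/T$ is a discrete valuation ring and that $\mm T = 0$; from the first fact, $Q$ will be the preimage of $T$ in $S$, and from the second, together with a length count, the equality $I = Q\nn$ will follow. I would not treat $d=1$ separately: there $I = 0$ (a nonzero ideal of a $1$-dimensional regular local ring has height $1$, contradicting $\dim S/I = 1$) and the argument below degenerates correctly with $Q=(0)$.

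First, the coarse structure of $R$. Since $\dim R = 1$ we have $\mm^n \neq 0$ for every $n$, hence $\mm^2 \neq \mm^3$ by Nakayama, so $\dim_k \mm^2/\mm^3 = 1$ exactly; writing $\mm^2 = Rf + \mm^3$ with $f \in \mm^2\setminus\mm^3$ and applying Nakayama to $\mm^2/Rf$ yields $\mm^2 = fR$. Moreover $T = H^0_\mm(R)$ is finitely generated and $\mm$-power torsion, hence of finite length, so $R/T$ has dimension $1$ and depth $\geq 1$, and it is again $g$-stretched (its degree-two graded piece is a quotient of $\mm^2/\mm^3$); by \Cref{prop_1dimCM_gstretched}, $R/T$ is a DVR, in particular a domain. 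Finally $f\notin T$ because $f\mm^n = \mm^{n+2}\neq 0$ for all $n$, so $f$ has nonzero image $\bar f$ in the domain $R/T$.

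The crux is the claim $\mm T = 0$. Since $T\subseteq \mm$ we have $\mm T \subseteq \mm^2 \cap T$. If $rf\in T$ with $r\in R$, then $\bar r\,\bar f = 0$ in the domain $R/T$ and $\bar f\neq 0$, so $r\in T$; hence $\mm^2\cap T = fR\cap T = fT = \mm^2 T$. Therefore $\mm T\subseteq \mm^2 T = \mm(\mm T)$, and Nakayama (applied to the finitely generated module $\mm T$) forces $\mm T = 0$. Consequently $\dim_k T = \ell(T)$ and $T\cap\mm^2 = fT\subseteq \mm T = 0$. Writing $\mm/T = (\bar t)$ for some $t\in\mm$, we get $\mm = tR + T$; the image of $T$ in $\mm/\mm^2$ is isomorphic to $T$ and omits the image of $t$ (otherwise $\mm = \mm^2 + T$, which gives $\bar\mm = \bar\mm^2$ and hence $\bar\mm = 0$, impossible in a DVR). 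Thus $\embdim R = 1 + \ell(T)$; but $I\subseteq\nn^2$ forces $\embdim R = \dim_k\nn/\nn^2 = d$, so $\ell(T) = d-1$.

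Now set $Q$ to be the preimage of $T$ under $S\twoheadrightarrow R$, so $I\subseteq Q$, $Q/I\cong T$, and $S/Q\cong R/T$ is a DVR of dimension $1$. Since $\embdim(S/Q) = 1$ one can choose $y_1,\dots,y_{d-1}\in Q$ linearly independent modulo $\nn^2$; these are part of a regular system of parameters of $S$, so $(y_1,\dots,y_{d-1})\subseteq Q$ is a prime of height $d-1$ with regular, one-dimensional quotient, and comparing with the DVR $S/Q$ (a surjection of $1$-dimensional regular local rings is an isomorphism) gives $Q = (y_1,\dots,y_{d-1})$, a regular sequence of $d-1$ elements of $\nn\setminus\nn^2$. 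Finally, $\mm T = 0$ says $\mm(Q/I) = 0$, i.e. $\nn Q\subseteq I\subseteq Q$; comparing lengths, $\ell(Q/\nn Q) = \mu(Q) = d-1 = \ell(T) = \ell(Q/I)$, whence $\ell(I/\nn Q) = 0$ and $I = \nn Q = Q\nn$. The step I expect to be the real obstacle is the vanishing $\mm T = 0$ (that $H^0_\mm(R)$ is killed by $\mm$), which is where both hypotheses — $g$-stretched and $\dim R = 1$ — are genuinely used; the rest is bookkeeping with lengths and regular systems of parameters.
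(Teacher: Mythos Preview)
Your proof is correct and takes a genuinely different route from the paper's. The paper proceeds by a double induction---on $d$ and by noetherian induction on $S/I$---working through a minimal primary decomposition $I = Q_1\cap\cdots\cap Q_s\cap L$ and invoking an auxiliary lemma (that $\nn^2$ is never contained in $Q_1\cap Q_2+(g)$ for two distinct parameter ideals $Q_1,Q_2$ and $g\in\nn^2$) to force $s=1$, then further case analysis to identify $Q_1$ with its radical. Your argument instead isolates $T=H^0_\mm(R)$ directly, uses \Cref{prop_1dimCM_gstretched} to see that $R/T$ is a DVR, and then exploits the domain property of $R/T$ together with $\mm^2=(f)$ to get the key vanishing $\mm T=0$ in one stroke; the rest is a clean length count. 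Your approach is shorter, avoids both inductions and the auxiliary lemma, and makes transparent \emph{why} the defining ideal must be $Q\nn$: the torsion $T$ is forced to be a $k$-vector space of the right dimension. The paper's approach, by contrast, yields the primary decomposition $I=Q\cap\nn^2$ explicitly along the way, which may be of independent interest.
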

In the proof, we will proceed by a double induction using the following lemma.

\begin{lem}
\label{lem_forbidden_containment}
Let $(S,\nn)$ be a regular local ring of dimension $d\ge 2$, $Q_1,Q_2$ distinct proper ideals each of which is generated a regular sequence of $d-1$ elements in $\nn\setminus \nn^2$. Then for any element $g\in \nn^2$,  $\nn^2$ is \textbf{not} contained in $Q_1\cap Q_2+(g)$.
\end{lem}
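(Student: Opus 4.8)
I want to show that if $Q_1,Q_2$ are distinct ideals of the regular local ring $(S,\nn)$, each generated by a regular sequence of $d-1$ elements from $\nn\setminus\nn^2$, then for every $g\in\nn^2$ one has $\nn^2\not\subseteq Q_1\cap Q_2+(g)$. The plan is to pass to the residue ring and count dimensions. Set $A=S/\nn^3$, which is artinian with maximal ideal $\mathfrak a=\nn/\nn^3$ and $\mathfrak a^3=0$; note $\dim_k(\mathfrak a/\mathfrak a^2)=d$ and $\dim_k(\mathfrak a^2)=\dim_k(\nn^2/\nn^3)=\binom{d+1}{2}$. The key point is that the images $\bar Q_i$ of the $Q_i$ in $A$ are ideals generated by part of a minimal generating set of $\mathfrak a$ (since the generators of $Q_i$ lie in $\nn\setminus\nn^2$ and form a regular sequence, hence are part of a regular system of parameters of $S$), so I can compute $\dim_k\bigl(\mathfrak a^2\cap(\bar Q_1\cap\bar Q_2+(\bar g))\bigr)$ explicitly and show it is strictly less than $\binom{d+1}{2}=\dim_k\mathfrak a^2$; this forces $\nn^2\not\subseteq Q_1\cap Q_2+(g)$ already modulo $\nn^3$.

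\textbf{Reducing to linear algebra over $k$.} First I would observe that a regular sequence of $d-1$ elements of $\nn\setminus\nn^2$ is automatically part of a regular system of parameters of $S$ (complete it to a system of parameters; in a regular local ring any system of parameters of length $\dim S$ is a regular system of parameters), so after an automorphism of $S$ we may assume $Q_1=(x_1,\dots,x_{d-1})$ for a fixed regular system of parameters $x_1,\dots,x_d$. Then $Q_1+\nn^3/\nn^3$ is spanned by all monomials of degree $1$ and $2$ that involve some $x_i$ with $i\le d-1$, so $\mathfrak a^2\cap\bar Q_1$ is spanned by all degree-$2$ monomials except $x_d^2$; in particular $\dim_k(\mathfrak a^2\cap\bar Q_1)=\binom{d+1}{2}-1$. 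Applying the analogous description to $Q_2$ relative to its own regular system of parameters, and using that $Q_1\neq Q_2$ forces the two "excluded" linear forms (the classes in $\mathfrak a/\mathfrak a^2$ not in $\bar Q_1$, resp.\ not in $\bar Q_2$) to be linearly independent, one gets $\dim_k\bigl(\mathfrak a^2\cap(\bar Q_1\cap\bar Q_2)\bigr)\le\binom{d+1}{2}-3$ when $d\ge 2$ — the three missing monomials being the squares of the two excluded linear forms and their product. Adding a single principal ideal $(\bar g)$ with $\bar g\in\mathfrak a^2$ can raise the dimension of the intersection with $\mathfrak a^2$ by at most $1$, so $\dim_k\bigl(\mathfrak a^2\cap(\bar Q_1\cap\bar Q_2+(\bar g))\bigr)\le\binom{d+1}{2}-2<\binom{d+1}{2}$, and we are done.

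\textbf{The delicate point.} The step that needs genuine care is the claim that $Q_1\neq Q_2$ implies the excluded linear forms are independent in $\mathfrak a/\mathfrak a^2$. If they were proportional, then (after scaling) $\bar Q_1$ and $\bar Q_2$ would be the ideals of $A$ generated by the same hyperplane $H\subseteq\mathfrak a/\mathfrak a^2$, i.e.\ $\bar Q_1=\bar Q_2$; lifting, $Q_1+\nn^3=Q_2+\nn^3$. To upgrade this to $Q_1=Q_2$ I would use that $Q_i$ is generated by a regular sequence of elements in $\nn\setminus\nn^2$ together with Nakayama / the intersection identities of \Cref{lem_regseq_intersect}: an ideal of $S$ generated by $d-1$ elements of a regular system of parameters is completely determined by its image in $\nn/\nn^2$ (it equals the contraction of the corresponding linear subspace), so $Q_1+\nn^2=Q_2+\nn^2$ already gives $Q_1=Q_2$. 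Hence the hypothesis $Q_1\neq Q_2$ genuinely forces the excluded hyperplanes, and therefore the excluded linear forms, to be distinct, giving independence. I expect assembling this last determinacy argument cleanly — rather than the dimension count, which is routine once set up — to be the main obstacle, and it is exactly where regularity of $S$ and the regular-sequence hypothesis on the $Q_i$ are used.
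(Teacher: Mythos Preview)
Your ``delicate point'' is not just delicate---it is where the argument breaks. The determinacy claim that an ideal generated by $d-1$ elements of a regular system of parameters is determined by its image in $\nn/\nn^2$ is false. Take $S=k[[x,y]]$ (so $d=2$), $Q_1=(x)$, $Q_2=(x+y^2)$. Both are generated by a single element of a regular system of parameters, both have image $k\bar x$ in $\nn/\nn^2$, yet $Q_1\neq Q_2$ since $y^2\notin(x)$. So the two ``excluded hyperplanes'' can coincide even when $Q_1\neq Q_2$.

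Worse, in exactly this situation the mod-$\nn^3$ dimension count collapses. In the example above, with $A=S/\nn^3$ and $\mathfrak a=\nn/\nn^3$, one computes $\bar Q_1=\mathrm{span}_k(x,x^2,xy)$, $\bar Q_2=\mathrm{span}_k(x+y^2,x^2,xy)$, hence $\bar Q_1\cap\bar Q_2=\mathrm{span}_k(x^2,xy)$. Taking $g=y^2$ gives $\bar Q_1\cap\bar Q_2+(\bar g)=\mathrm{span}_k(x^2,xy,y^2)=\mathfrak a^2$, so your inequality $\dim_k\bigl(\mathfrak a^2\cap(\bar Q_1\cap\bar Q_2+(\bar g))\bigr)\le\binom{d+1}{2}-2$ fails. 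The lemma is nonetheless true here: $Q_1\cap Q_2+(g)=(x(x+y^2),y^2)=(x^2,y^2)$, and $xy\notin(x^2,y^2)$. The obstruction lives above $\nn^3$, so passing to $S/\nn^3$ genuinely loses the information you need. (There is also a minor miscount in the case $H_1\neq H_2$: the monomial $e_{d-1}e_d$ lies in both $H_1\cdot V$ and $H_2\cdot V$, so the codimension of $(\bar Q_1\cap\mathfrak a^2)\cap(\bar Q_2\cap\mathfrak a^2)$ is $2$, not $3$; this does not affect the conclusion in that case.)

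The paper's proof avoids the reduction modulo $\nn^3$ entirely: it first shows $Q_1\cap Q_2$ has the explicit form $L+(yz)$ for suitable $y,z$ and a $(d-2)$-generated ideal $L$, and then reduces modulo $L$ to a $2$-dimensional regular local ring, where a direct length count in $\nn^2/\nn^3$ (now requiring $3$ generators but only $(yz,g)$ available) gives the contradiction.
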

\begin{proof}
We can assume that $Q_1=(y_1,\ldots,y_{d-1})$ and $\nn=(y_1,\ldots,y_d)$, where $y_i\in \nn \setminus \nn^2$. The ring $S/Q_1$ is regular local of dimension 1 with uniformizer $y_d+Q_1$, hence it is a principal ideal domain. The ideal  $(Q_1+Q_2)/Q_1 \neq S/Q_1$ is therefore principal, say generated by $y_d^s+Q_1$ for some $s\ge 1$. This implies $Q_1+Q_2=Q_1+(y_d^s)=(y_1,\ldots,y_{d-1},y_d^s)$.

Let $Q_2=(z_1,\ldots,z_{d-1})$, where the $z_i$s form a regular sequence of elements in $\nn\setminus \nn^2$. We can write
\[
z_i=\sum_{j=1}^{d-1}a_{ij}y_j+b_{id}y_d^s, a_{ij}, b_{id}\in S.
\]
\textbf{Claim 1:} Not all of $b_{1d},\ldots,b_{d-1,d}$ are in $\nn$.

Assume the contrary, then $z_i\in Q_1+\nn y_d^s$. Hence
\[
Q_1+(y_d^s)=Q_1+Q_2\subseteq Q_1+\nn y_d^s.
\]
Nakayama's lemma then implies $Q_1+(y_d^s)=Q_1$, namely $y_d^s\in Q_1$. This is a contradiction.

Thus we can assume, say $b_{1n} \notin \nn$. It is harmless to assume that $b_{1n}=1$.

For $2\le i\le d-1$, denote $z'_i=z_i-b_{in}z_1 \in Q_1$. We note that $z'_i\notin \nn^2$ for all $2\le i\le d-1$. Indeed, assume otherwise. Choose $z_d$ such that $z_1,\ldots,z_{d-1},z_d$ minimally generated $\nn$. Since $z_i-b_{in}z_1\in \nn^2$, working modulo $(z_j: j\neq i)$, we get $z_i\in (z_i^2)$ in a suitable regular local ring of dimension 1 with uniformizer $z_i$. This is impossible.

We also see that $z_1,z'_2,\ldots,z'_{d-1}$ form a regular sequence in $\nn\setminus \nn^2$, that generates $Q_2$. So it is harmless to assume that $z_2,\ldots,z_{d-1}\in Q_1$. Working modulo $(z_2,\ldots,z_{d-1})$, we see that $Q_1=(z_2,\ldots,z_{d-1},y)$ for some $y\in \nn\setminus \nn^2$ such that $z_2,\ldots,z_{d-1},y$ is a regular sequence. For simplicity, denote $z=z_1,L=(z_2,\ldots,z_{d-1})$, so $Q_2=L+(z), Q_1=L+(y)$.

\textbf{Claim 2:} $Q_1\cap Q_2=L+(yz)$.

It suffices to show that $Q_1\cap Q_2 \subseteq L+(yz)$. Take $w\in Q_1\cap Q_2$. As $w\in Q_2$, we can assume that $w\in (z)$, namely $w=az, a\in S$. Since $Q_1+Q_2=(z_2,\ldots,z_{d-1},y,z)$ is $\nn$-primary and $\dim S=d$, $z_2,\ldots,z_{d-1},y,z$ is a system of parameters for $S$. But $S$ is Cohen--Macaulay, hence $z_2,\ldots,z_{d-1},y,z$ is a regular sequence. Now $az\in Q_1=(z_2,\ldots,z_{d-1},y)$, so $a\in Q_1$. Therefore $w=az\in Q_1z \subseteq L+(yz)$, as desired.

Finally, assume that $\nn^2\subseteq Q_1\cap Q_2+(g)$ for some $g\in \nn^2$. Then by Claim 2,
\[
\nn^2 \subseteq L+(yz,g).
\]
Working modulo $L$, we reduce to the case $(S,\nn)$ is a regular local ring of dimension 2 such that $\nn^2\subseteq (yz,g)$ for some $y,z\in \nn, g\in \nn^2$. Thus $\nn^2=(yz,g)$. Letting $k=S/\nn$, this implies $\dim_k \nn^2/\nn^3 \le 2$. This is impossible, since $A=\gr_\nn S$ is a standard graded two-dimensional polynomial ring $k[u,v]$, so $\dim_k A_2=\dim_k \nn^2/\nn^3=3$. This contradiction finishes the proof.
\end{proof}

We are now ready to prove Theorem \ref{thm_Matsuoka}.
\begin{proof}[Proof of Theorem \ref{thm_Matsuoka}]
We proceed by noetherian induction on $S/I$ and induction on $d=\dim S$. The case $d=1$ is immediate: $\dim(S/I)=1=\dim S$, so $I=0$.
 
Assume that $d\ge 2$. By the assumption, there exists an element $g\in \nn^2$ such that $\nn^2=I+(g)$.

\textsf{Step 1:} We note that $g\notin \nn^3$, otherwise by Nakayama's lemma $\nn^2=I$, contracting the fact that $\dim(S/I)=1$. Note that $g\notin \sqrt{I}$, for the same reason. 

\textsf{Step 2:} Since $d\ge 2$, by \Cref{prop_1dimCM_gstretched}, $S/I$ is not Cohen--Macaulay, namely $\depth(S/I)=0$. Hence $\nn \in \Ass(S/I)$. Let $I=Q_1\cap \cdots \cap Q_s \cap L$ be a minimal primary decomposition of $I$, where $\sqrt{L}=\nn$. Note that $s\ge 1$ as $\dim(S/I)=1$. Each $Q_i$ is a primary ideal of height $d-1$. First we show that $s=1$.

Assume that $s\ge 2$. Fix $1\le i\le s$. Since $s\ge 2$,  $J:=Q_i\cap \nn^2$ strictly contains $I$. Clearly $\nn^2=I+(g)$ implies $\nn^2=J+(g)$, and $\dim(S/J)=1$. Hence by the hypothesis of the noetherian induction, $J=U\nn$ for some $U$ generated by a regular sequence of $d-1$ elements in $\nn\setminus \nn^2$. Thanks to  Equation \eqref{eq_intersect_Ipowersofm} in \Cref{ex_regularlocal}, we have $J=U\nn=U\cap\nn^2$ which is a minimal primary decomposition of $J$. This implies that $Q_i \cap \nn^2=U\cap \nn^2$, hence $Q_i=U$.

But then $\nn^2=I+(g)\subseteq Q_1\cap Q_2+(g)$, and this is a contradiction by Lemma \ref{lem_forbidden_containment}. Hence $s=1$. 

\textsf{Step 3:} Denote $Q=Q_1$, $J=\sqrt{Q}\cap \nn^2$. We claim that $I=J$. Assume the contrary, that  $I\subsetneq J$. Clearly $I+(g) = \nn^2 \subseteq J+(g)\subseteq \nn^2$, so equalities happen throughout. Moreover, $\dim(S/J)=1$, we conclude by the induction hypothesis that $J=U\nn$ for some $U$ generated by a regular sequence of $d-1$ elements in $\nn\setminus \nn^2$. This implies that $\sqrt{Q}\cap \nn^2=J=U\cap \nn^2$, hence $\sqrt{Q}=U$. Also $I\subseteq J=\nn U$.

Assume that $x_1,\ldots,x_d$ minimally generate $\nn$ and $U=(x_1,\ldots,x_{d-1})$. Since $\nn^2\subseteq U+(g)$, we get $g\notin U$. Now $x_d^2\in U\nn+(g)$, so $x_d^2=\alpha g-u$, for some $\alpha\in S, u\in U\nn$. Note that $u=\alpha g-x_d^2 \in U\nn=\nn^2\cap U$.

If $\alpha \in \nn$, then $x_d^2=\alpha g-u\in \nn^3+\nn U$. Now
\[
\nn^2=(U+(x_d))^2=\nn U+(x_d^2) \subseteq \nn U+\nn^3,
\]
hence by Nakayama's lemma $\nn^2=\nn U$. This is a contradiction.

Thus $\alpha \notin \nn$. So we can assume $\alpha=1$, namely $g=x_d^2+u$.

For $1\le i\le d-1, 1\le j\le d$, since $x_ix_j\in \nn^2=I+(g)$, we deduce $x_ix_j=u_{ij}+c_{ij}g$, where $u_{ij}\in I\subseteq U, c_{ij}\in S$. As $g\notin U=(x_1,\ldots,x_{d-1})$ and $c_{ij}g=x_ix_j-u_{ij}\in U$, we get $c_{ij}\in U$. Thus $x_ix_j\in I+Ug \subseteq I+\nn^2 U$ for any $1\le i\le d-1,1\le j\le d$, and hence $U\nn\subseteq I+\nn^2U$. By Nakayama's lemma, this implies $\nn U \subseteq I \subseteq \nn U$. This is a contradiction to the assumption $I\neq \sqrt{Q}\cap \nn^2=U\cap \nn^2=\nn U$. Hence $I=J=\sqrt{Q}\cap \nn^2$.

\textsf{Step 4:} By Step 3, we can assume that $Q$ is a prime ideal strictly contained in $\nn$, and $I=Q\cap \nn^2$. If $Q\subseteq \nn^2$ then $I=Q$ and $\depth(S/I)>0$, a contradiction. Hence $Q$ contains an element, say $z$, in $\nn\setminus\nn^2$.

Since $\dim(S/(I+(z)))=1$ (as $Q$ is the only minimal prime of $I+(z)$), working in $S/(z)$ and using the induction on $\dim(S)$, we get $I+(z)=\nn U+(z)$ for some ideal $U\supseteq (z)$ of $S$ such that $U/(z)$ is generated by a regular sequence of $d-2$ elements in $\nn\setminus \nn^2$ in $S/(z)$, say $U/(z)=(z_2+(z),\ldots,z_{d-1}+(z))$. This means that $U=(z,z_2,\ldots,z_{d-1})$ is generated by a regular sequence of $d-1$ elements in $\nn \setminus \nn^2$. In particular, $U$ is a prime ideal of $S$.

Now $I+(z)=Q\cap \nn^2+(z)$, so it is easy to see that $\{Q\}=\Min(I+(z))=\Min(\nn U+(z))=\{U\}$. Hence $Q=U=(z,z_2,\ldots,z_{d-1})$. This finishes the inductions and the proof. 
\end{proof}
\begin{cor}
\label{cor_charofgstr} 
Let $(R,\mm)$ be a complete noetherian local ring. Then $R$ is  a one-dimensional $g$-stretched  ring if and only if $R$ has the form $R=S/Q\nn$ where $S$ is a complete regular local ring of dimension $d$ and $Q$ is generated by a regular sequence of $d-1$ elements in $\nn\setminus \nn^2$.
\end{cor}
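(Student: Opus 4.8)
The plan is to prove the two implications separately: the forward direction reduces, by Cohen's structure theorem, to \Cref{thm_Matsuoka}, while the converse is a short direct verification.

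\emph{Forward direction.} Suppose $R$ is a complete one-dimensional $g$-stretched local ring. By Cohen's structure theorem it is a quotient of a complete regular local ring, say $R=S_0/I_0$ with maximal ideal $\nn_0\subseteq S_0$. If $I_0\not\subseteq\nn_0^2$, choose $f\in I_0\setminus\nn_0^2$: then $f$ is part of a regular system of parameters of $S_0$, so $S_0/(f)$ is again a complete regular local ring, of dimension $\dim S_0-1$, and $R\cong (S_0/(f))/(I_0/(f))$. The dimension of the regular cover strictly decreases at each such step and is bounded below by $\dim R=1$, so after finitely many steps we reach a presentation $R=S/I$ with $(S,\nn)$ a complete regular local ring of some dimension $d\ge 1$ and $I\subseteq\nn^2$. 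Since $S/I=R$ is one-dimensional and $g$-stretched, \Cref{thm_Matsuoka} gives $I=Q\nn$ with $Q$ generated by a regular sequence of $d-1$ elements in $\nn\setminus\nn^2$ --- and, as the proof of that theorem shows, these elements may be taken to be part of a regular system of parameters of $S$, equivalently $S/Q$ is a discrete valuation ring. This is the asserted form.

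\emph{Converse.} Let $R=S/Q\nn$ with $(S,\nn)$ a complete regular local ring of dimension $d$ and $Q$ generated by $d-1$ elements which, as above, we take to form part of a regular system of parameters; after a coordinate change, $Q=(x_1,\dots,x_{d-1})$ with $x_1,\dots,x_d$ a minimal generating set of $\nn$. Then $R$ is a complete noetherian local ring, being a quotient of $S$ by the proper ideal $Q\nn\subseteq\nn^2$. If $d=1$, then $Q\nn=(0)$ and $R=S$ is a discrete valuation ring, which is plainly one-dimensional and $g$-stretched, so assume $d\ge 2$. By \Cref{lem_regseq_intersect}(2), applied with $I=Q$, $J=(x_d)$ and $s=i=1$, we have $Q\cap\nn^2=Q\nn$; since $S/Q$ is a one-dimensional regular local ring, hence a domain, $\sqrt{Q\nn}=\sqrt{Q\cap\nn^2}=Q$, so $\dim R=\dim S/Q=1$. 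Write $\mm=\nn/Q\nn$ for the maximal ideal of $R$. From $Q\nn\subseteq\nn^2$ we get $\mm^2=\nn^2/Q\nn$ and $\mm^3=(\nn^3+Q\nn)/Q\nn$, hence $\mm^2/\mm^3\cong\nn^2/(\nn^3+Q\nn)$. Now $Q\nn$ is generated by the degree-two monomials $x_ix_j$ with $1\le i\le d-1$ and $1\le j\le d$; modulo $\nn^3$ their classes span exactly the subspace of the $k$-vector space $\nn^2/\nn^3$ spanned by all degree-two monomials in $x_1,\dots,x_d$ other than $x_d^2$. Hence $\nn^2/(\nn^3+Q\nn)$ is $1$-dimensional over $k$, i.e. $\mu_R(\mm^2)=1$, and $R$ is a one-dimensional $g$-stretched local ring.

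The one point deserving attention is the reduction in the forward direction to a regular cover $S$ whose defining ideal lies in $\nn^2$: an arbitrary Cohen cover need not have this property, and one trims it using the fact that an element of $\nn_0\setminus\nn_0^2$ extends to a regular system of parameters. With that in hand, the forward implication is a direct citation of \Cref{thm_Matsuoka} and the converse is an elementary monomial count.
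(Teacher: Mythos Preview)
Your forward direction matches the paper's one-line proof: Cohen's structure theorem gives $R=S/I$ with $I\subseteq\nn^2$, and then \Cref{thm_Matsuoka} applies; you are simply more explicit than the paper about arranging $I\subseteq\nn^2$, which the paper absorbs into its citation of Cohen's theorem.

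You additionally supply a proof of the converse, which the paper's argument omits entirely. Your verification is correct, and you are right to read the hypothesis on $Q$ as ``part of a regular system of parameters'' (this is what the proof of \Cref{thm_Matsuoka} actually produces and what makes the equivalence an honest one). Indeed, without that reading the literal converse fails: in $S=k[[x,y,z]]$ the sequence $x,\,x+y^2$ is regular with both terms in $\nn\setminus\nn^2$, yet $Q=(x,x+y^2)=(x,y^2)$ gives $\mu(\mm^2)=\dim_k \nn^2/(\nn^3+Q\nn)=3$, so $S/Q\nn$ is not $g$-stretched. Your explicit flagging of this point is a genuine improvement over the paper's treatment.
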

\begin{proof}
By Cohen's structure theorem \cite[Theorem 29.4]{Mat}, $R$ can be expressed as the homomorphic image $S/I$ of a complete regular local ring  $(S,\nn)$ with $I\subseteq \nn^2$. Then the assertion immediately follows from Theorem \ref*{thm_Matsuoka}.
\end{proof} 

\section{Characterization of $g$-stretched Koszul rings}
\label{sect_charact_Koszulness}

The main result of this section strengthens a result due to Avramov, Iyengar and \c{S}ega \cite[Theorem 4.1]{AIS}. These authors only consider rings $(R,\mm)$ such that $R$ is $g$-stretched  and $\mm^3=0$, while we consider all $g$-stretched local rings. 
\begin{thm}
\label{thm_Koszul}
Let $R$ be a $g$-stretched local ring. Then the following are equivalent: 
\begin{enumerate}[\quad \rm (1)]
 \item $R$ is Koszul;
  \item Either $\dim R=1$, or $R$ is artinian, $\mm^3=0$, and $r(R)\le \codim(R)-1$ unless $\mm^2=0$.
\end{enumerate}
\end{thm}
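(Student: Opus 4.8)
The plan is to prove the two implications separately. The statements about artinian rings with $\mm^3=0$ will be quoted from Avramov--Iyengar--\c{S}ega \cite[Theorem 4.1]{AIS}, so the genuinely new content is the case $\dim R=1$ (always Koszul) and the artinian case with $\mm^3\neq 0$ (never Koszul).

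\emph{The implication} (2)$\Rightarrow$(1). If $R$ is artinian with $\mm^2=0$, then $\gr_\mm R$ is defined by quadrics and is (trivially) Koszul. If $R$ is artinian with $\mm^3=0$ and $\mm^2\neq 0$, then $\mu(\mm^2)=1$, so $R$ is stretched with $\mm^3=0$, and since $r(R)\le\codim R-1$ the ring is Koszul by \cite[Theorem 4.1]{AIS}. Finally, if $\dim R=1$, I would first pass to the $\mm$-adic completion (allowed by \Cref{prop.passtocompletion}) and use \Cref{cor_charofgstr} to reduce to $R=S/Q\nn$, where $(S,\nn)$ is a complete regular local ring of dimension $d$ with regular system of parameters $y_1,\dots,y_d$ and $Q=(y_1,\dots,y_{d-1})$; note that in $R$ one has $\mm Q=Q\nn=0$ and $\mm^n=(y_d^n)$ for $n\ge 2$. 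I would then show that the collection $\Fc$ of all ``coordinate ideals'' $(y_{i_1},\dots,y_{i_k})$ with $\{i_1,\dots,i_k\}\subseteq\{1,\dots,d\}$ (the empty set giving $(0)$) is a Koszul filtration of $R$ in the sense of \Cref{dfnKoszulfil}: the first condition is clear; the second splits into the case $y_d\in I$ (then $\mm^{s+1}\subseteq(y_d)\subseteq I$ and $\mm^sI=\mm^{s+1}$) and the case $I\subseteq Q$ (then $\mm^sI=0$, while $I\cap\mm^{s+1}=0$ because $y_d$ is a nonzerodivisor modulo $Q$ and $\mm Q=0$); the third is verified by building each coordinate ideal up one generator at a time, placing $y_d$ last, and observing that $(y_{i_1},\dots,y_{i_{j-1}}):y_{i_j}$ equals $\mm$ if $i_j\le d-1$ and equals $Q$ (itself a coordinate ideal) if $i_j=d$. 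By \cite[Theorem 4.3]{Ng15} a ring admitting a Koszul filtration is Koszul. (Alternatively, one computes $\gr_\mm R\cong k[Y_1,\dots,Y_d]/(Y_iY_j:1\le i\le d-1)$ by matching Hilbert series; this is a quadratic monomial algebra, hence Koszul.)

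\emph{The implication} (1)$\Rightarrow$(2). Suppose $R$ is Koszul. If $\dim R=1$ there is nothing to prove, so assume $R$ is artinian. Then $A:=\gr_\mm R$ is a Koszul, hence quadratic, standard graded $k$-algebra and $\dim_k A_2=\mu(\mm^2)\le 1$. If $\mu(\mm^2)=0$, then $\mm^2=\mm^3$ and Nakayama gives $\mm^2=0$, so (2) holds. If $\mu(\mm^2)=1$, write $A=k[Y_1,\dots,Y_e]/(J_2)$ and consider the symmetric multiplication pairing $\beta\colon A_1\times A_1\to A_2\cong k$, which is nonzero since $A_1A_1=A_2\neq 0$. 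The crux is a dichotomy on $\rk\beta$: if $\rk\beta=1$, then in suitable coordinates $J_2$ is spanned by all quadrics except $Y_1^2$, so $Y_1^n\neq 0$ in $A$ for every $n$ and $A$ is infinite dimensional, contradicting that $R$ is artinian; if $\rk\beta\ge 2$, then writing the generator of $A_2$ as $v^2$ for a non-isotropic $v$ (or, in the alternating characteristic-two situation, as a product $Y_aY_b$ with $\beta(Y_a,Y_b)\neq 0$) and using that $vw\in J_2$ whenever $\beta(v,w)=0$, one checks $A_1\cdot A_2=0$, i.e. $A_3=0$, whence $\mm^3=0$. Thus $\mm^3=0$ in all cases; and if moreover $\mm^2\neq 0$ then $R$ is stretched with $\mm^3=0$, so $r(R)\le\codim R-1$ by the converse half of \cite[Theorem 4.1]{AIS}, while if $\mm^2=0$ the last clause of (2) is vacuous. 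Either way (2) holds.

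The main obstacle is precisely the step ``$A$ quadratic with $\mu(\mm^2)=1$'' $\Rightarrow$ ``$\mm^3=0$'' in the artinian part of (1)$\Rightarrow$(2). The bilinear-form analysis above settles it uniformly in all characteristics, but one must take a little care with the classification of symmetric bilinear forms in characteristic two; alternatively, one could substitute Elias and Valla's structure theorem \cite[Theorem 3.1]{ElV08} for stretched artinian rings and read off directly that those with $\mm^3\neq 0$ are not defined by quadrics, hence not Koszul.
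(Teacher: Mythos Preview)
Your proposal is correct, and the argument for $(2)\Rightarrow(1)$ is essentially the paper's: both pass to the completion, invoke \Cref{cor_charofgstr}, and exhibit a Koszul filtration on $S/Q\nn$. Your filtration is larger (all coordinate ideals) while the paper uses only the chain $(0),(\ovl{x_1}),\ldots,(\ovl{x_1},\ldots,\ovl{x_{d-1}}),\mm$; either works.

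The direction $(1)\Rightarrow(2)$, however, takes a genuinely different route. The paper isolates the key step ``artinian $g$-stretched Koszul $\Rightarrow \mm^3=0$'' as \Cref{lem_artinKoszul} and proves it by induction on $\mu(\mm)$: writing $\mm^2=(a)$, the Koszul hypothesis forces $\reg_R(0:a)\le 1$, so $(0:a)=(x_1,\ldots,x_d)$ is generated by linear forms; then $\ovl{R}=R/(x_1,\ldots,x_d)$ is again Koszul, induction gives $\mm^3\subseteq(x_1,\ldots,x_d)$, and a direct computation finishes. Your argument instead passes to $A=\gr_\mm R$, uses only that $A$ is \emph{quadratic}, and analyses the multiplication pairing $\beta\colon A_1\times A_1\to A_2\cong k$: rank $1$ forces $A$ non-artinian, while rank $\ge 2$ forces $A_3=0$ (in the non-isotropic case via $v^3=\lambda^{-1}(vw_1)w_2=0$ for suitable $w_1,w_2\in v^\perp$; in the alternating characteristic-two case via $a^2=b^2=0$). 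Your approach is more elementary---it never uses the full strength of Koszulness, only quadraticity---and in fact establishes the slightly stronger statement that an artinian \emph{quadratic} standard graded $g$-stretched algebra already satisfies $\mm^3=0$. The paper's approach, by contrast, uses the regularity machinery available over Koszul algebras in an essential way. Note that your suggested alternative via \cite[Theorem 3.1]{ElV08} would reintroduce a characteristic-zero hypothesis (cf.\ \Cref{prop_EliasValla}), so the bilinear-form route is the right one if you want the result in all characteristics.
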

The following remark will allow us to reduce to the complete case; note that if $R$ is artinian then $r(R)=\rk_k(0:_R \mm)$ and $\codim(R)=\mu(\mm)$.
\begin{rem}
\label{rem_complete}
Let $(R,\mm)$ be a noetherian local ring, and $(\wh{R},\wh{\mm})$ its $\mm$-adic completion. Then the following statements hold.
\begin{enumerate}[\quad \rm (1)]
 \item $R$ is Koszul if and only if $\wh{R}$ is so.
 \item For each $i\ge 1$, there is an equality $\mu(\mm^i)=\mu(\wh{\mm}^i)$. In particular, $R$ is $g$-stretched if and only if $\wh{R}$ is so, and $\mm^i=0$ if and only if $\wh{\mm}^i=0$ for each $i\ge 1$.
 \item $\dim R=\dim \wh{R}$.
 \item $\rk_k(0:_R\mm)=\rk_k(0:_{\wh{R}}\wh{\mm})$.
\end{enumerate}
Indeed, (1) was mentioned in \Cref{prop.passtocompletion}, and it is also a consequence of the fact that for all $i\ge 0$, $\wh{\mm}^i/\wh{\mm}^{i+1} \cong \mm^i/\mm^{i+1}$, and so $\gr_\mm(R)\cong \gr_{\wh{\mm}}(\wh{R})$. The same fact implies (2), while (3) is classical.

For (4), let $I=(0:_R\mm)$. By flatness, $(0:_{\wh{R}}\wh{\mm})=I\wh{R}=\wh{I}$. Using $\mm I=0$, we get
\[
I\cong I\ot_R \frac{R}{\mm} \cong (I\ot_R \wh{R})\ot_{\wh{R}}\frac{\wh{R}}{\wh{\mm}} \cong \wh{I} \ot_{\wh{R}}\frac{\wh{R}}{\wh{\mm}} \cong \wh{I}.
\]
Hence the natural map $I\to \wh{I}$ is an isomorphism, and $\rk_k(0:_R\mm)=\mu(I)=\mu(\wh{I})=\rk_k(0:_{\wh{R}}\wh{\mm})$, as desired.
\end{rem}

One ingredient for the implication (1) $\Longrightarrow$ (2) in \Cref{thm_Koszul} is
\begin{lem}
\label{lem_artinKoszul}
Let $(R,\mm,k)$ be a $g$-stretched local ring, that is artinian and Koszul. Then $\mm^3=0$.
\end{lem}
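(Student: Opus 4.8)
The claim is that a $g$-stretched artinian Koszul local ring $(R,\mm,k)$ necessarily has $\mm^3=0$. Since a $g$-stretched artinian ring has $\mu(\mm^2)\le 1$, so $\mm^2$ is principal; if $\mm^2=(z)$ then all powers $\mm^j$ for $j\ge 2$ are principal, generated by powers of $z$, and by Nakayama's lemma the chain $\mm^2\supseteq \mm^3\supseteq\cdots$ stabilizes at $0$. Thus what we really have to rule out is $\mm^3\ne 0$, i.e. the situation $\mm^2\ne 0=\mm^{s+1}$ for some $s\ge 2$. By \Cref{rem_complete} we may pass to the completion and assume $R$ is complete; then by Cohen's structure theorem write $R=S/I$ with $(S,\nn)$ a regular local ring and $I\subseteq \nn^2$. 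In fact, by subtracting off a regular system of parameters supported in degree one we may even reduce to $\mu(\mm)=\embdim R$ and assume $S$ has $\nn$-adic structure matching $\mm$; i.e., we want to understand the defining ideal $I$ of a stretched artinian Koszul ring with $\mm^s\ne 0$.

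The key tool will be the Koszul hypothesis, which forces $\reg_R k=0$ in the graded sense, equivalently that the associated graded ring $G=\gr_\mm(R)$ is a Koszul $k$-algebra (so in particular $G$ is defined by quadrics), and that $k$ has a linear resolution over $G$. First I would compute the Hilbert function of $R$: because $R$ is stretched with $\mm^s\ne 0$, one has $\ell(R)=1+e+1+1+\cdots+1$ with $e=\mu(\mm)$ and $(s-1)$ trailing $1$'s, i.e. the Hilbert series of $G$ is $H_G(t)=1+et+t^2+t^3+\cdots+t^s$. Now I invoke the numerical constraint that Koszulness imposes on Hilbert series: if $G$ is Koszul then $H_G(-t)^{-1}$ has nonnegative coefficients (equivalently the Poincaré series $P^R_k(t)=1/H_G(-t)$ must have nonnegative integer coefficients). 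The plan is to show that for $s\ge 3$ the power series $1/H_G(-t)$, where $H_G(-t)=1-et+t^2-t^3+\cdots\pm t^s$, fails to have nonnegative coefficients — this is the arithmetic heart of the argument. For $s=3$, $H_G(-t)=1-et+t^2-t^3$ and one checks the reciprocal has a negative coefficient (for every $e\ge 1$) by a short explicit computation; for general $s\ge 3$ an inductive or generating-function estimate should reduce to the $s=3$ obstruction. Alternatively, and perhaps more cleanly, I would argue via the Backelin–Roos / Avramov–Iyengar–Şega analysis of rings with $\mu(\mm^2)\le 1$: their \cite[Theorem 4.1]{AIS} — whose strengthening is \Cref{thm_Koszul} — already shows that a stretched Koszul ring with $\mm^3\ne 0$ cannot exist, so one really only needs the $\mm^3=0$ conclusion as the contrapositive packaging, and the honest work is the Hilbert-series inequality above.

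The step I expect to be the main obstacle is precisely ruling out $s\ge 3$: showing the reciprocal of $1-et+t^2-\cdots\pm t^s$ has a negative coefficient, uniformly in $e\ge 1$. One has to be a little careful because for $e$ large the first several coefficients of the reciprocal are large and positive (they start $1, e, e^2-1,\dots$), so the negative coefficient appears only further out, at a spot depending on $e$; a crude tail estimate — comparing against the geometric-like growth governed by the dominant root of $H_G$ — should pin it down, but making this rigorous for all $s$ and all $e$ simultaneously will require the most care. A fallback that avoids the uniform estimate: first handle the (finitely many, per $e$) small cases by hand, and for $\mm^3\ne 0$ note that $R$ then has a quotient, or $G$ has a quotient algebra, of the form $k[x_1,\dots,x_e]/J$ with $J$ generated by quadrics that is not Koszul because its Hilbert series already violates the $s=3$ inequality; Koszulness does not pass to arbitrary quotients, so this needs the observation that $\mm^3\ne 0$ forces the relevant truncation/regular quotient to again be stretched Koszul — which follows from \Cref{thm_small_inclusion} or from the behavior of $\lind$ under reduction by a $\gr$-regular element as in \Cref{prop_dim1_reduction}. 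In any case, once $s\le 2$ is forced, $\mm^3=0$ is immediate, completing the proof.
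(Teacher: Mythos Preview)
Your main line of attack---showing that $1/H_G(-t)$ has a negative coefficient whenever $s\ge 3$---simply fails. Take $e=1$, $s=3$: then $H_G(-t)=1-t+t^2-t^3=(1-t)(1+t^2)$ and
\[
\frac{1}{H_G(-t)}=1+t+t^4+t^5+t^8+t^9+\cdots,
\]
whose coefficients are all in $\{0,1\}$. For $e=2$, $s=3$ the recurrence $a_n=2a_{n-1}-a_{n-2}+a_{n-3}$ gives $1,2,3,5,9,16,28,49,\ldots$, all positive; the dominant root of $u^3-2u^2+u-1$ is real and larger than $1$, so no sign change ever occurs. Thus the Poincar\'e-series positivity constraint is genuinely too weak to force $\mm^3=0$, and the ``arithmetic heart'' you identify cannot be carried out. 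Your appeal to \cite[Theorem~4.1]{AIS} as an alternative is circular: that result has $\mm^3=0$ among its \emph{hypotheses}, so it cannot be used to derive $\mm^3=0$. The remaining fallback (passing to a truncation or regular quotient that is ``again stretched Koszul'') is not fleshed out, and there is no obvious $\gr$-regular element available in an artinian ring to make \Cref{prop_dim1_reduction} applicable.

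The paper's proof proceeds quite differently. It first reduces to the standard graded case by passing to $G=\gr_\mm(R)$, then argues by induction on $\mu(\mm)$. Writing $\mm^2=(a)$, the exact sequence $0\to (0:a)(-2)\to R(-2)\xrightarrow{\cdot a}\mm^2\to 0$ and the Koszul bound $\reg_R\mm^2=2$ force $\reg_R(0:a)\le 1$, so $(0:a)$ is generated by linear forms $x_1,\ldots,x_d$. Since $\reg_R R/(x_1,\ldots,x_d)=0$, the quotient $\overline{R}=R/(x_1,\ldots,x_d)$ is again Koszul (and still $g$-stretched), so induction gives $\mm^3\subseteq (x_1,\ldots,x_d)$; a direct degree count then shows any $u\in\mm^3$ lies in $\mm^2\cdot(0:a)=a\cdot(0:a)=0$. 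The essential idea you are missing is this regularity argument identifying $(0:a)$ as a linear ideal and using it to set up the induction.
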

\begin{proof}
We first reduce to the case when $(R,\mm,k)$ is a standard graded $k$-algebra.

Note that since $R$ is artinian and Koszul, so is the associated graded ring $\gr_\mm(R)$. Assume that $\mm^2$ is the principal ideal $(a)$. Passing to $\gr_\mm(R)$, we also see that the graded maximal ideal $\mm^g=\bigoplus_{i\ge 1} \mm^i/\mm^{i+1}$ has the property that $(\mm^g)^2$ is the principal ideal $(a+\mm^3)$. Hence we reduce to the following statement:

\emph{Let $(R,\mm,k)$ be a standard graded $k$-algebra. Assume that $R$ is artinian, Koszul, and $\mm^2$ is a principal ideal. Then $\mm^3=0$.}

Indeed, applying this for $\gr_\mm(R)$, we get $(\mm^g)^3=0$ namely $\mm^3=\mm^4$. Nakayama's lemma then takes care of the rest.

For the statement, we use induction on the minimal number of homogeneous generators of $\mm$. If $\mm=0$ then there is nothing to do. Assume that $\mu(\mm)\ge 1$. We can also assume that $\mm^2\neq 0$.

Since $R$ is artinian, for some $s\ge 3$ we have $\mm^s=0\neq \mm^{s-1}$. Let $\mm^2=(a)$, where $a$ is homogeneous of degree 2. Consider the exact sequence
\[
0\to (0:a)(-2) \to R(-2) \xrightarrow{\cdot a} \mm^2 \to 0.
\]
Since $R$ is Koszul, $\reg_R \mm^2=2$. The above exact sequence yields a chain
\[
\reg_R (0:a) \le \max\{\reg_R R+2, \reg_R \mm^2+1\}-2=1.
\]
Since $0\neq \mm^{s-1} \subseteq (0:a)$, we conclude that $\reg_R (0:a)=1$. This implies that $(0:a)$ is generated by linear forms, say $(0:a)=(x_1,\ldots,x_d)$ where $d\ge 1$.

Consider the $k$-algebra $R/(x_1,\ldots,x_d)$. Since $\reg_R R/(x_1,\ldots,x_d)=0$, and $R$ is Koszul, by \cite[Section 3.2, Theorem 2]{CDR}, $\overline{R}=R/(x_1,\ldots,x_d)$ is also Koszul. Let $\overline{\mm}=\mm/(x_1,\ldots,x_d)$. By the induction hypothesis, $\overline{\mm}^3=0$, namely $\mm^3\subseteq (x_1,\ldots,x_d)$.

Take any homogeneous element $u\in \mm^3$. Since $u$ has degree at least 3, and $x_1,\ldots,x_d$ are linear forms, 
\[
u=a_1x_1+\cdots+a_dx_d, a_i\in \mm^2.
\]
But $\mm^2=(a)$ and $(0:a)=(x_1,\ldots,x_d)$, so $a_ix_i=0$. This implies that $u=0$, namely $\mm^3=0$. This concludes the proof.
\end{proof}
\begin{proof}[Proof of \Cref{thm_Koszul}]
By \Cref{rem_complete}, we may assume that $R$ is complete.

(1) $\Longrightarrow$ (2). Suppose $R$ is Koszul. Since $R$ is a $g$-stretched ring, $\dim R\le 1$. If $\dim R<1$ then $R$ is artinian. By \cite[Theorem 4.1]{AIS} it is sufficient to show that $\mm^3=0$. This follows from \Cref{lem_artinKoszul}.

(2) $\Longrightarrow$ (1). Suppose that  $R$ is artinian, $\mm^3=0$, and $\rk_k(0:\mm)\le \mu(\mm)-1$. Then by \cite[Theorem 4.1]{AIS} $R$ is Koszul. For the rest of the proof we consider the following two cases.

If $\dim R=1$ then as $R$ is complete, by \Cref{cor_charofgstr} there is a complete regular local ring $(S,\nn)$ of dimension $d$ such that  $R=S/(x_1,x_2,\ldots,x_{d-1})\nn$, where $x_1,x_2,\ldots,x_{d-1}$ is a regular sequence in $\nn\setminus \nn^2$. We may assume $\nn=(x_1,x_2,\ldots,x_{d-1},x_d)$. Let $I=(x_1,x_2,\ldots,x_{d-1})\nn\subseteq S$ and $\mm=\nn/I$. We consider the set 
$$\mathcal{F}=\{(0),(\ovl{x_1}),(\ovl{x_1},\ovl{x_2}),\ldots,(\ovl{x_1},...,\ovl{x_{d-1}}),\mm\}$$
of ideals of $R$, where $\ovl{x_i}$ denotes the image of $x_i$ in $R$ for all $i$. Then the following \Cref{lem_Koszulfiltr} shows that $\mathcal{F}$ is a Koszul filtration of $R$. Hence $R$ is Koszul, and the proof of the implication (2) $\Longrightarrow$ (1) is completed.
\end{proof}
\begin{lem}
\label{lem_Koszulfiltr}
With the notation as in the proof of \Cref{thm_Koszul}, $\mathcal{F}$ is a Koszul filtration of $R$.
\end{lem}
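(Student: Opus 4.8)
The plan is to verify the three axioms in \Cref{dfnKoszulfil} for the family $\mathcal{F}$, pushing every computation up to the regular local ring $S$. First I will fix convenient notation in $S$: put $Q_i=(x_1,\dots,x_i)\subseteq S$ for $0\le i\le d$, so that $Q_0=(0)$, $Q_{d-1}=:Q$, $Q_d=\nn$ and $I=Q\nn$; recall that $x_1,\dots,x_d$, being a minimal generating set of $\nn$ in a $d$-dimensional regular local ring, is a regular system of parameters and hence an $S$-regular sequence. I will use two elementary remarks: the preimage in $S$ of $(\ovl{x_1},\dots,\ovl{x_i})$ is $Q_i+I$; and for ideals $A\supseteq I$ of $S$ and $y\in S$ one has $(A/I):_R\ovl{y}=(A:_S y)/I$. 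Axiom $(1)$ is then immediate since $(0),\mm\in\mathcal{F}$.

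Next I will check axiom $(3)$ using the obvious chain $(0)\subset(\ovl{x_1})\subset\dots\subset(\ovl{x_1},\dots,\ovl{x_i})$, each step adjoining one generator $\ovl{x_j}$ (with $\ovl{x_d}$ adjoined last in the case of $\mm$); every ideal appearing in the chain already lies in $\mathcal{F}$, so the only thing to prove is that $(\ovl{x_1},\dots,\ovl{x_{j-1}}):\ovl{x_j}$ lies in $\mathcal{F}$, and by the second remark this colon equals $((Q_{j-1}+I):_S x_j)/I$. When $1\le j\le d-1$ I will argue that it is $\nn$: the inclusion $\nn\subseteq(Q_{j-1}+I):_S x_j$ holds because $x_j\in Q$ gives $x_j\nn\subseteq Q\nn=I$, while the colon is proper because in the regular local ring $S/Q_{j-1}$ (of dimension $d-j+1\ge2$) the image of $x_j$ is a minimal generator of the maximal ideal whereas the image of $Q_{j-1}+I$ lands in the square of that maximal ideal, so $x_j\notin Q_{j-1}+I$; hence this colon maps to $\mm\in\mathcal{F}$. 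When $j=d$ I will use that $Q_{d-1}+I=Q$ and that $x_d$ is a nonzerodivisor modulo $Q$ (as $x_1,\dots,x_d$ is a regular sequence) to conclude the colon is $Q$, which maps to $(\ovl{x_1},\dots,\ovl{x_{d-1}})\in\mathcal{F}$.

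For axiom $(2)$, the equalities are trivial for the ideals $(0)$ and $\mm$. For $(\ovl{x_1},\dots,\ovl{x_i})$ with $1\le i\le d-1$, the key point I will exploit is that $Q_i\subseteq Q$ yields $\nn Q_i\subseteq Q\nn=I$, so this ideal is annihilated by $\mm$; therefore $\mm^{s}(\ovl{x_1},\dots,\ovl{x_i})=0$ for all $s\ge1$, and it remains only to show $(\ovl{x_1},\dots,\ovl{x_i})\cap\mm^2=0$ (using $\mm^{s+1}\subseteq\mm^2$). An element of this intersection is represented by some $a\in Q_i$ with $a\in\nn^2$, i.e.\ $a\in Q_i\cap\nn^2$; here I will invoke \Cref{lem_regseq_intersect}(2) with the regular sequence $x_1,\dots,x_d$ split as $x_1,\dots,x_i$ and $x_{i+1},\dots,x_d$ to get $Q_i\cap\nn^2=Q_i\nn\subseteq Q\nn=I$, so the class of $a$ is $0$. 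This settles all three axioms and shows $\mathcal{F}$ is a Koszul filtration.

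Everything here is routine bookkeeping after the reduction to $S$; the single step that genuinely needs an external input is the intersection identity $Q_i\cap\nn^2=Q_i\nn$, which is exactly what \Cref{lem_regseq_intersect}(2) supplies. I will also note the degenerate case $d=1$: then $Q=(0)$, $R=S$ is a discrete valuation ring and $\mathcal{F}=\{(0),\mm\}$ coincides with the Koszul filtration from \Cref{ex_regularlocal}; in any event the argument above applies verbatim, the cases $j\le d-1$ in axiom $(3)$ and the range $1\le i\le d-1$ in axiom $(2)$ being vacuous.
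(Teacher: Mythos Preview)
Your proof is correct and follows essentially the same strategy as the paper's: verify the three axioms of \Cref{dfnKoszulfil} by lifting computations to the regular local ring $S$ and exploiting that $x_1,\dots,x_d$ is a regular sequence. The only minor difference is in the verification of axiom~(2): the paper observes directly that $\mm^{s+1}=(\ovl{x_d}^{\,s+1})$ and then shows $(\ovl{x_1},\dots,\ovl{x_i})\cap(\ovl{x_d}^{\,s+1})=0$ by the regularity of $x_d$ modulo $(x_1,\dots,x_{d-1})$, whereas you reduce to the single case $s=1$ and invoke \Cref{lem_regseq_intersect}(2) to get $Q_i\cap\nn^2=Q_i\nn\subseteq I$; both arguments amount to the same underlying fact.
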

\begin{proof} The first condition in \Cref{dfnKoszulfil} is automatically satisfied for $\mathcal{F}$. For the second condition we need to show for all $1\le i\le d$ and $s\ge 1$ that
\begin{align}\label{equal1}
(\ovl{x_1},\ovl{x_2},\ldots,\ovl{x_i})\cap \mm^{s+1}=(\ovl{x_1},\ovl{x_2},\ldots,\ovl{x_i})\mm^{s}.
\end{align}
It is clear that the equality (\ref{equal1}) holds true for $i=d$, because $\mm=(\ovl{x_1},\ovl{x_2},\ldots,\ovl{x_{d}})$. Assume that $i\le d-1$. Note that since $(\ovl{x_1},\ldots,\ovl{x_{d-1}})\mm=0$, 
\[
\mm^{s+1}=(\ovl{x_d}^{s+1}) \quad \text{for all $s\ge 1$}.
\]
We have to show that $(\ovl{x_1},\ovl{x_2},\ldots,\ovl{x_i})\cap \mm^{s+1} =0$ for $i\le d-1$.

Now $(\ovl{x_1},\ovl{x_2},\ldots,\ovl{x_i})\cap \mm^{s+1}=(\ovl{x_1},\ovl{x_2},\ldots,\ovl{x_i})\cap (\ovl{x_d}^{s+1})$. Take $f\in S$ such that $\ovl{f}\ovl{x_d}^{s+1} \in (\ovl{x_1},\ovl{x_2},\ldots,\ovl{x_i})$ then 
$$
fx_d^{s+1}\in (x_1,x_2,\ldots,x_i)+I\subseteq (x_1,x_2,\ldots,x_{d-1}).
$$
This yields  $f\in (x_1,x_2,\ldots,x_{d-1})$, proving the equality in \eqref{equal1}. 

\quad For the last condition in \Cref{dfnKoszulfil}, it suffices to observe that
\begin{align*}
 (\ovl{x_1},\ldots,\ovl{x_{i-1}}): \ovl{x_i} &= \mm, \quad \text{for $1\le i\le d-1$},\\
  (\ovl{x_1},\ldots,\ovl{x_{d-1}}): \ovl{x_d} &= (\ovl{x_1},\ldots,\ovl{x_{d-1}}).
\end{align*}
This concludes the proof.
\end{proof}
\begin{ex}
	Let $R=k[[x,y]]/(x^3,xy,y^2)$. Then $\mm^2=(x^2)\neq 0$ and $\mm^3=0$. Hence $R$ is $g$-stretched, artinian, $\mm^3=0$, but $R$ is not Koszul as $\gr_\mm(R)$ is not even quadratic. In this case $(0:\mm)=(x^2,y)$ has rank $2>\mu(\mm)-1=1$. This shows that the condition $\rk_k(0:\mm)\le \mu(\mm)-1$ unless $\mm^2= 0$ in \Cref{thm_Koszul} is necessary.
\end{ex}

We have already known from Theorem \ref{thm_Koszul} that every one-dimensional $g$-stretched ring  is Koszul. The following shows that such rings are also absolutely Koszul.

\begin{prop}
\label{prop_dim1_absKos}
Let $(R,\mm)$ be a $g$-stretched  local ring of dimension 1.
\begin{enumerate}[\quad \rm (1)]
 \item  If $\depth R=1$, then $R$ is a DVR, and $\glind R=\dim R=1$.
 \item  If $\depth R=0$, then $R$ is both Koszul and Golod; in particular, $\glind R\le 2\embdim R$.
\end{enumerate}
In either case, $R$ is absolutely Koszul of finite global linearity defect.
\end{prop}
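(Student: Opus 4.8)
The plan is to treat the two depth cases separately and reduce each to an already-understood situation. For \textbf{(1)}, since $\dim R=1$ and $\depth R=1$ the ring $R$ is Cohen--Macaulay, so by \Cref{prop_1dimCM_gstretched} it is a DVR, in particular regular of dimension one and hence Koszul (\Cref{ex_regularlocal}). To evaluate $\glind R$, I use that over a DVR every finitely generated module is of the form $R^{a}\oplus\bigoplus_i R/\mm^{n_i}$, that $\linp^R$ commutes with finite direct sums, and hence that $\lind$ is additive over such sums; thus $\glind R=\max\{\lind_R R,\ \lind_R(R/\mm^n):n\ge 1\}$. Here $\lind_R R=0$ and $\lind_R(R/\mm)=\lind_R k=0$, while for $n\ge 2$ the minimal free resolution of $R/\mm^n$ is $0\to R\xrightarrow{\,t^n\,}R\to R/\mm^n\to 0$ for a uniformizer $t$; since $t^n\in\mm^2$ the induced differential on $\linp^R$ vanishes, so $\lind_R(R/\mm^n)=1$. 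Therefore $\glind R=1=\dim R$, and $R$ is absolutely Koszul.

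For \textbf{(2)}, I first reduce to $R$ complete: by \Cref{prop.passtocompletion} the Koszul property and the value of $\glind$ are preserved under $\mm$-adic completion, by \Cref{lem.Golod.passgetocompletion} so is the Golod property, and by \Cref{rem_complete} completion leaves $\embdim$, $\dim$ and $\depth$ unchanged. Assuming $R$ complete, \Cref{cor_charofgstr} lets me write $R\cong S/Q\nn$ with $(S,\nn)$ complete regular local of dimension $d$ and $Q=(x_1,\dots,x_{d-1})$ part of a regular system of parameters $x_1,\dots,x_d$ of $S$, exactly as in the set-up of \Cref{lem_Koszulfiltr}. Since $\dim R=1$, \Cref{thm_Koszul} already gives that $R$ is Koszul.

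To prove $R$ Golod, I identify the associated graded ring: using $\mm\cdot(\ovl{x_1},\dots,\ovl{x_{d-1}})=0$ and $\mm^i=(\ovl{x_d}^{\,i})\neq 0$ for all $i\ge 2$ (so that $\gr_\mm(R)$ has Hilbert function $1,d,1,1,\dots$), one obtains
\[
\gr_\mm(R)\ \cong\ P/\bigl((X_1,\dots,X_d)(X_1,\dots,X_{d-1})\bigr),\qquad P=k[X_1,\dots,X_d],
\]
that is, $\gr_\mm(R)$ is the quotient of $P$ by the product of its graded maximal ideal with the ideal $(X_1,\dots,X_{d-1})$. By the theorem of Herzog and Huneke on such products with the maximal ideal, $\gr_\mm(R)$ is a Golod $k$-algebra. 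This forces $R$ to be Golod as well: writing $R=S/I$ with $I\subseteq\nn^2$, one has $\dim_k H_j(K^R)=\dim_k\Tor^S_j(R,k)\le\dim_k\Tor^P_j(\gr_\mm R,k)=\dim_k H_j(K^{\gr_\mm R})$ for all $j$ (Betti numbers do not decrease under the deformation to the normal cone $R\rightsquigarrow\gr_\mm R$), and $P^R_k(t)=P^{\gr_\mm R}_k(t)$ because $R$ is Koszul; feeding these two facts into Serre's coefficientwise inequality and the Golod equality for $\gr_\mm(R)$ pins $P^R_k(t)$ to the Serre bound, so $R$ is Golod. An alternative to the associated-graded detour would be a direct local version of the Herzog--Huneke argument applied to the complete intersection ideal $Q$.

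It remains to record that a local ring that is both Koszul and Golod is absolutely Koszul with $\glind R\le 2\embdim R$; this combines the explicit Golod resolution with the Koszul hypothesis (see \cite{CINR}, and \cite{HIy} for the interplay of Golodness with the linearity defect). Together with part (1) (where $\glind R=1$), this proves that $R$ is absolutely Koszul of finite global linearity defect in both cases. The step I expect to be the main obstacle is the Golodness in \textbf{(2)}: both the clean identification of $\gr_\mm(R)$ and, especially, the transfer of the Golod property from $\gr_\mm(R)$ to $R$ must be carried out with care, and the precise reference for the bound $\glind R\le 2\embdim R$ over Koszul Golod rings should be pinned down.
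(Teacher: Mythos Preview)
Your argument is correct, but your route to Golodness in part~(2) differs from the paper's and is more circuitous. The paper works directly with $R$ (not $\gr_\mm R$): since $Q^2\subseteq Q\nn\subseteq Q$, Ahangari Maleki's criterion \cite[Lemma~2.1]{AM19} reduces the Golodness of $S/Q\nn$ to showing that the natural map $\Tor^S_i(S/Q\nn,k)\to\Tor^S_i(S/Q,k)$ vanishes for $i\ge 1$; equivalently, that $\Tor^S_i(Q\nn,k)\to\Tor^S_i(Q,k)$ vanishes for $i\ge 0$. This follows immediately from \Cref{thm_small_inclusion}, because $Q$ is a Koszul $S$-module by \Cref{ex_regularlocal} and $Q\nn\subseteq\nn Q$. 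This argument is short, characteristic-free, and reuses tools already set up in the paper.

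Your approach---identifying $\gr_\mm R\cong P/\bigl((X_1,\dots,X_{d-1})\cdot(X_1,\dots,X_d)\bigr)$, invoking Herzog--Huneke for its Golodness, and then transferring Golodness to $R$ via the chain
\[
P^R_k(t)\preccurlyeq\frac{(1+t)^e}{1-\sum a_jt^{j+1}}\preccurlyeq\frac{(1+t)^e}{1-\sum b_jt^{j+1}}=P^{\gr_\mm R}_k(t)=P^R_k(t)
\]
(using $a_j=\dim_k H_j(K^R)\le b_j=\dim_k H_j(K^{\gr_\mm R})$ and $P^R_k=P^{\gr_\mm R}_k$ from Koszulness)---is a valid and rather elegant transfer principle. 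Its cost is that the Herzog--Huneke product result you invoke is typically stated in characteristic zero, whereas \Cref{prop_dim1_absKos} carries no such hypothesis; you would need either to cite a characteristic-free version (for products with the maximal ideal this is available, but you should pin it down), or simply apply the same Ahangari Maleki argument to $\gr_\mm R$ instead. The precise reference you wanted for $\glind R\le 2\,\embdim R$ is \cite[Corollary~6.2]{HIy}, which the paper uses. Your treatment of part~(1) agrees with the paper's and adds a welcome explicit computation of $\glind R$.
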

\begin{proof}
(1) If $\depth R=1$, we know from Proposition \ref{prop_1dimCM_gstretched} that $R$ is a DVR. Thus  $\glind R=\dim R=1$.

(2) Passing to the $\mm$-adic completion does not change the statement. Indeed, since $\depth \wht{R}=\depth R=0$ and $\embdim \wht{R}=\embdim R$, $R$ is Koszul (or Golod) if and only if $\wht{R}$ has the corresponding property, thanks to \Cref{rem_complete} and \Cref{lem.Golod.passgetocompletion}. By \Cref{prop.passtocompletion}, $\glind \wht{R}=\glind R$ and $R$ is absolutely Koszul if and only if $\wht{R}$ is so. Thus we may assume that $R$ is a complete noetherian local ring. In particular, we may write $R=S/I$, where $(S,\nn)$ is a complete regular local ring, and $I\subseteq \nn^2$.

If $\depth R=0$, denote $d=\dim S \ge \dim R=1$. By Theorem \ref{thm_Matsuoka}, $I=Q\nn$ for some ideal $Q$ generated by a regular sequence of $(d-1)$ elements in $\nn\setminus \nn^2$. The proof of Theorem \ref{thm_Koszul} implies that $R$ is Koszul. To show that $R$ is Golod, we use a result of Ahangari Maleki \cite[Lemma 2.1]{AM19}. As $Q^2\subseteq I= Q\nn \subseteq Q$, the last result implies that $S/I=S/(Q\nn)$ is Golod if we can show that the map $\Tor^S_i(S/(Q\nn),k)\to \Tor^S_i(S/Q,k)$ induced by the natural projection $S/(Q\nn) \to S/Q$, is zero for all $i\ge 1$.

Now $Q$ is a Koszul $S$-module by Example \ref{ex_regularlocal}. Hence by \Cref{thm_small_inclusion}, the natural map $\Tor^S_i(Q\nn,k)\to \Tor^S_i(Q,k)$ induced by the injection $Q\nn \to Q$, is zero for all $i\ge 0$. This is the desired conclusion.

Hence $R$ is both Koszul and Golod, so by a result of Herzog--Iyengar \cite[Corollary 6.2]{HIy}, $\glind R\le 2\embdim R$. The proof is concluded.
\end{proof}
On the absolute Koszulness of artinian $g$-stretched rings we have the following assertion.
\begin{prop}
\label{prop_dim0_absKos}
Let $(R,\mm)$ be an artinian $g$-stretched ring. If $R$ is Koszul, then it is absolutely Koszul.
\end{prop}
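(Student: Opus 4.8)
The plan is to pass to the completion, deduce $\mm^3=0$ from Koszulness, and then run through the short list of possibilities for a $g$-stretched Koszul artinian ring, invoking the appropriate homological input in each case. By \Cref{rem_complete} together with \Cref{prop.passtocompletion}, passing to the $\mm$-adic completion changes neither the $g$-stretched property, nor Koszulness, nor the global linearity defect, so I may assume $R$ is complete. Since $R$ is artinian and Koszul, \Cref{lem_artinKoszul} gives $\mm^3=0$. I distinguish two cases according to whether $\mm^2=0$; note that when $\mm^2\neq 0$ we automatically have $\mu(\mm^2)=1$ (as $R$ is $g$-stretched) and $\embdim R\ge 2$, since an artinian local ring of embedding dimension $1$ with $\mm^2\neq 0$ is $k[[x]]/(x^e)$ with $e\ge 3$, which is not Koszul.

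If $\mm^2=0$, then $\glind R=0$. Indeed, let $M$ be any finitely generated $R$-module and choose an exact sequence $0\to \Omega\to F_0\to M\to 0$ with $F_0$ free and $\Omega\subseteq \mm F_0$. Since $\mm\Omega\subseteq \mm^2F_0=0$, $\Omega$ is a finite direct sum of copies of $k$ and hence a Koszul module, and moreover $\Omega\cap\mm^{s+1}F_0=\mm^s\Omega$ for every $s\ge 0$ (both sides equal $\Omega$ when $s=0$, and both vanish when $s\ge 1$). Applying the final assertion of \Cref{thm_small_inclusion} then yields $\lind_R M=0$.

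If $\mm^2\neq 0$, so that $\mm^3=0$ and $\mu(\mm^2)=1$, I split further according to whether $R$ is Gorenstein. Suppose first $R$ is not Gorenstein. Then $R$ is Golod: this is contained in the classification of local rings with $\mm^3=0$ of Avramov--Iyengar--\c{S}ega \cite{AIS} (such a ring with $\mu(\mm^2)=1$ is Golod unless it is Gorenstein); alternatively, when $\chara k\neq 2$ one diagonalizes the symmetric bilinear form $\mm/\mm^2\times\mm/\mm^2\to\mm^2$ to exhibit $R$ explicitly as a fibre product $R_0\times_k\bigl(k[[w_1,\dots,w_t]]/(w_1,\dots,w_t)^2\bigr)$ with $R_0$ a short Gorenstein ring and $t\ge 1$, and fibre products of two local rings distinct from their common residue field are Golod. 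Since $R$ is now both Golod and Koszul, \cite[Corollary 6.2]{HIy} gives $\glind R\le 2\embdim R<\infty$, so $R$ is absolutely Koszul. Suppose instead $R$ is Gorenstein; then $R$ is a short ($\mm^3=0$) Gorenstein local ring of embedding dimension at least $2$, and such rings are absolutely Koszul by the structure theory of free resolutions over short Gorenstein local rings (work of Henriques--\c{S}ega, and of Rossi--\c{S}ega on compressed Gorenstein rings of socle degree $2$). This exhausts all cases.

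The crux is the Gorenstein subcase: a short Gorenstein Koszul ring is never Golod --- it is a complete intersection when $\embdim R=2$ and has a nontrivial Koszul homology algebra otherwise --- so its absolute Koszulness cannot be deduced from the Golod machinery of \cite{HIy} and must be imported from the detailed analysis of its minimal free resolutions. A secondary technical point is that, in the non-Gorenstein case, if one wishes to avoid quoting the full classification of \cite{AIS}, one must carry out the fibre-product decomposition by hand via Macaulay's inverse system, which as phrased above uses $\chara k\neq 2$, so a little extra care is needed to keep the Golodness argument characteristic-free.
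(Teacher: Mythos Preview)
Your proof is substantially more elaborate than the paper's, which is only a few lines. The paper argues as follows: \Cref{thm_Koszul} gives $\mm^3=0$ (and the type inequality). If $\mm^2=0$, the first syzygy of any $M$ lies in $\mm F_0$ and is killed by $\mm$, hence is a $k$-vector space, so $\lind_R M\le 1$. If $\mm^2\neq 0$, one has $\dim_k\mm^2=1$, and the paper simply invokes \cite[Theorem 1.1 and Theorem 4.1, (i)$\Leftrightarrow$(v)]{AIS}, which says directly that every finitely generated module over such a Koszul ring has a Koszul syzygy. There is no Gorenstein/non-Gorenstein split and no appeal to Golodness or to the Henriques--\c{S}ega and Rossi--\c{S}ega papers.

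A few remarks on your version. First, the passage to the completion is harmless but vacuous: an artinian local ring is already complete. Second, your treatment of the case $\mm^2=0$ via \Cref{thm_small_inclusion} is correct and in fact yields the sharper conclusion $\glind R=0$, improving on the paper's bound $\glind R\le 1$. Third, and more seriously, your $\mm^2\neq 0$ argument rests on two assertions whose justification is incomplete. The claim that a short local ring with $\mu(\mm^2)=1$ is Golod unless Gorenstein is plausible (indeed, non-Gorenstein forces a socle element outside $\mm^2$, and in the equicharacteristic complete case one can split it off as a fibre-product factor), but this is not the content of the theorems in \cite{AIS} that the paper uses, and you yourself note that your fibre-product route needs $\chara k\neq 2$ and ``extra care'' in general; as written this is a gap. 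In the Gorenstein subcase you import results of Henriques--\c{S}ega and Rossi--\c{S}ega which are outside the paper's bibliography; this is acceptable as an alternative, but it exactly duplicates what \cite[Theorem~1.1]{AIS} already gives in one stroke for both subcases. In short: your route can be made to work, but it is considerably longer and leaves the non-Gorenstein Golodness claim under-justified, whereas the paper's proof avoids all of this by citing \cite{AIS} directly.
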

\begin{proof}
By \Cref{thm_Koszul}, we get $\mm^3=0$, and $\rk_k(0:\mm)\le \mu(\mm)-1$ unless $\mm^2=0$. If $\mm^2=0$, then for any finitely generated $R$-module $M$, its first syzygy module is killed by $\mm$, hence is a direct sum of copies of $k$. As $k$ is a Koszul module, $\lind_R M\le 1$ for any $R$-module $M$. 

Assume that $\mm^2\neq 0$, so $\dim_k \mm^2=1$ as $R$ is $g$-stretched. By \cite[Theorem 1.1 and Theorem 4.1, (i) $\Longleftrightarrow$ (v)]{AIS}, every finitely generated $R$-module $M$ has a Koszul syzygy module, namely $\lind_R M$ is finite. Hence $R$ is absolutely Koszul.
\end{proof}

\section{\Cref{thm_main_HerzogIyengar} in the case $\dim R=0$}
\label{sect_dim0_case}

The main result of this section is that \Cref{quest_HIy} has a positive answer for $g$-stretched local rings whose residue fields have characteristic zero. In particular, this confirms the case $R$ is artinian in \Cref{thm_main_HerzogIyengar}: as mentioned in the introductory \Cref{sect_intro}, $R$ is an artinian ring of almost minimal multiplicity if and only if $R$ is an artinian $g$-stretched ring with $\mm^3=0\neq \mm^2$.
\begin{thm}
\label{thm_HIy_gstretched_long}
Let $(R,\mm,k)$ be a noetherian local ring with $\chara(k)=0$. Assume that $R$ is $g$-stretched. Then the following are equivalent:
\begin{enumerate}[\quad \rm (1)]
\item $R$ is absolutely Koszul;
\item $R$ is Koszul;
\item $\lind_R k<\infty$;
\item Either $\dim R=1$, or $R$ is artinian, $\mm^3=0$, and $r(R)\le \codim(R)-1$ unless $\mm^2=0$.
\end{enumerate}
\end{thm}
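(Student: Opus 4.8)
The plan is to prove $(2)\Leftrightarrow(4)$, $(2)\Rightarrow(1)\Rightarrow(3)$, and $(3)\Rightarrow(2)$, the last of these being the Herzog--Iyengar implication and carrying essentially all of the difficulty. First I would pass to the completion: by \Cref{rem_complete} and \Cref{prop.passtocompletion} each of (1)--(4) holds for $R$ if and only if it holds for the $\mm$-adic completion $\wh R$---for (4) one uses that $\dim R$, the $\mm$-adic Hilbert function, and $\rk_k(0:_R\mm)$, hence $\codim R$ and, when $R$ is artinian, $r(R)$, are unchanged under completion. So one may assume $R=S/I$ with $(S,\nn)$ a complete regular local ring and $I\subseteq\nn^2$. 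Now $(2)\Leftrightarrow(4)$ is exactly \Cref{thm_Koszul}; for $(2)\Rightarrow(1)$ I would split using (4)---if $\dim R=1$ then $R$ is absolutely Koszul by \Cref{prop_dim1_absKos}, and if $R$ is artinian then $R$ is absolutely Koszul by \Cref{prop_dim0_absKos}---and $(1)\Rightarrow(3)$ is immediate since $k$ is a finitely generated $R$-module.

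The content is in $(3)\Rightarrow(2)$, which via $(2)\Leftrightarrow(4)$ is the same as $(3)\Rightarrow(4)$. Assume $\lind_R k<\infty$; as $R$ is $g$-stretched, $\dim R\le1$. If $\dim R=1$, then (4) holds automatically (and in fact $R$ is Koszul by \Cref{thm_Koszul}, independently of the hypothesis). So suppose $R$ is artinian, i.e.\ stretched. If $\mm^2=0$, then $\gr_\mm R=R$ has square-zero maximal ideal, $k$ has a linear $\gr_\mm R$-resolution, and $R$ is Koszul. If $\mm^2\neq0$ but $\mm^3=0$, then \c{S}ega's affirmative answer to \Cref{quest_HIy} for rings with $\mm^3=0$ \cite{Se} (or \cite{AIS}) forces $\lind_R k=0$, so $R$ is Koszul and (4) holds by \Cref{thm_Koszul}. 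Thus the decisive point is to show that the remaining case $\mm^2\neq0\neq\mm^3$ (i.e.\ socle degree at least $3$) is incompatible with $\lind_R k<\infty$, hence cannot occur.

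To rule out that case I would suppose $R$ is stretched artinian with $\mm^3\neq0$ and $\lind_R k<\infty$, and derive a contradiction, distinguishing two subcases. If $R$ is Gorenstein, then \Cref{thm_HIy_artinGor}---whose proof fuses the Elias--Valla structure theorem \cite{ElV08} with the weak Koszul filtration machinery of \Cref{sect_weakKoszulfiltr}---shows $R$ is Koszul, and then $\mm^3=0$ by \Cref{lem_artinKoszul}, a contradiction. If $R$ is not Gorenstein, I would again use the Elias--Valla structure theorem to present $R$ explicitly as a quotient of a power series ring, and then conclude in one of two (for us equivalent) ways: exhibit a weak Koszul filtration of $R$ and feed \Cref{lem_weakfiltr_Tormap} into the Tor-vanishing inequalities of \Cref{prop_ld_exactseq}(3) to force $\lind_R k\le1$---hence $\lind_R k=0$ by \Cref{rem_ldge1} and $\mm^3=0$ by \Cref{lem_artinKoszul}, contradiction---or read off from the structure that $R$ is Golod (in the spirit of the dimension-one argument in \Cref{prop_dim1_absKos}, via \cite{AM19}), so that the affirmative answer to \Cref{quest_HIy} for Cohen--Macaulay Golod rings \cite{My19} makes $R$ Koszul, again contradicting $\mm^3\neq0$ through \Cref{lem_artinKoszul}. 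Either way, socle degree $\ge3$ is excluded, and $(3)\Rightarrow(2)$ follows.

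The main obstacle is precisely this last case---a stretched artinian ring of socle degree $\ge3$. Such a ring is blatantly non-Koszul, so what has to be certified is that its residue field has \emph{infinite} linearity defect, and this genuinely requires both the fine Elias--Valla structure theory and the weak Koszul filtration apparatus; the subtlest point is the Gorenstein subcase (\Cref{thm_HIy_artinGor}), where $R$ carries a weak Koszul filtration and yet need not be Koszul.
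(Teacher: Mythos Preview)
Your overall architecture matches the paper's: you reduce to the completion, invoke \Cref{thm_Koszul} for $(2)\Leftrightarrow(4)$, split $(2)\Rightarrow(1)$ via \Cref{prop_dim1_absKos} and \Cref{prop_dim0_absKos}, and for $(3)\Rightarrow(2)$ you separate $\dim R=1$ from the artinian case, then Gorenstein from non-Gorenstein, citing \Cref{thm_HIy_artinGor} for the former. All of this is exactly what the paper does.

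The one substantive divergence is your treatment of the \emph{non-Gorenstein artinian} case. The paper does not build a weak Koszul filtration here, nor does it invoke Golodness and Myers' theorem; instead it gives a short, self-contained argument (\Cref{prop_artin_nonGor_stretched}): from the Elias--Valla presentation one reads off that $\mm=(y_2)\oplus(y_1,y_3,\ldots,y_h)$ and $(0):y_2=\mm$, so the exact sequence $0\to k\xrightarrow{\cdot y_2} R\to R/(y_2)\to 0$ together with \Cref{prop_ld_exactseq}(2) gives $\lind_R(y_2)=\lind_R\mm+1$, contradicting the direct-summand inequality $\lind_R(y_2)\le\lind_R\mm$ unless $\lind_R k=0$. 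This is quicker and more elementary than either of your proposed routes. Your weak-Koszul-filtration route would presumably work (the relevant colon ideals behave well), but the phrasing ``force $\lind_R k\le 1$'' is misleading---having a weak Koszul filtration does not by itself bound $\lind_R k$, as the Gorenstein case already illustrates; one still has to run a contradiction argument. Your Golod route is plausible in the $\tau=h$ case but is not obviously justified when $2\le\tau<h$, and you would need to actually verify Golodness rather than gesture at \cite{AM19}.

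A minor point: your preliminary case split by socle degree ($\mm^2=0$; $\mm^3=0\neq\mm^2$ via \cite{Se}; $\mm^3\neq0$) is harmless but unnecessary. The paper's \Cref{prop_artin_nonGor_stretched} and \Cref{thm_HIy_artinGor} handle all socle degrees uniformly, so the contradiction with $\mm^3\neq0$ via \Cref{lem_artinKoszul} is an extra detour you do not need to take.
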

Note that, by Theorem \ref{thm_Koszul}, (2) $\Longleftrightarrow$ (4). The critical implication in \Cref{thm_HIy_gstretched_long} is (3) $\Longrightarrow$ (2).  If $R$ is $g$-stretched and $\dim R=1$  then $R$ is Koszul by Theorem \ref{thm_Koszul}. We therefore just need to treat the artinian case of (3) $\Longrightarrow$ (2). The harder situation is when $R$ is  artinian and Gorenstein. We treat the easier non-Gorenstein case first.
\begin{prop}
\label{prop_artin_nonGor_stretched}
Let $(R,\mm,k)$ be a $g$-stretched artinian ring. Assume furthermore that $\chara(k)=0$. If $\lind_R k<\infty$  and $R$ is not Gorenstein then $R$ is a Koszul ring.
\end{prop}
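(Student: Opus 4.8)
The plan is to prove that a non-Gorenstein artinian $g$-stretched ring is \emph{Golod}, and then invoke the known positive answer to \Cref{quest_HIy} for Cohen--Macaulay Golod rings (item (4) of the list in \Cref{sect_intro}, due to \cite{My19}): once $R$ is Golod and artinian, the hypothesis $\lind_R k<\infty$ forces $\lind_R k=0$, that is, $R$ is Koszul.

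First I would pass to the $\mm$-adic completion, which by \Cref{rem_complete} preserves being $g$-stretched, the value $\lind_R k$, and Koszulness. If $\mm^2=0$ there is nothing to do: $\gr_\mm R$ then has square-zero maximal ideal, hence is the quadratic monomial algebra $k[x_1,\dots,x_{\mu(\mm)}]/(x_ix_j)$, which is Koszul, so $R$ is Koszul. Thus I may assume $\mm^2\neq 0$. By the structure theory of stretched artinian rings \cite{Sal79, ElV08}, the Hilbert function of $R$ is $(1,\mu(\mm),1,\dots,1)$; in particular there is $s\ge 2$ with $\mm^{s+1}=0\neq\mm^s$ and $\dim_k(\mm^j/\mm^{j+1})\le 1$ for all $j\ge 2$.

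The heart of the argument is the observation that $(0:_R\mm)\cap\mm^2=\mm^s$, which is one-dimensional. Indeed, if $0\neq z\in(0:_R\mm)\cap\mm^2$ and $j$ is maximal with $z\in\mm^j$, then the class of $z$ spans the one-dimensional space $\mm^j/\mm^{j+1}$, so $\mm^j=(z)$ by Nakayama; hence $\mm^{j+1}=\mm z=0$, which forces $j=s$, and $\mm^s=kz'$ for any $0\neq z'\in\mm^s$. It follows that $r(R)=1+\dim_k\big((0:_R\mm)/\mm^s\big)$, so the assumption that $R$ is \emph{not} Gorenstein gives $(0:_R\mm)\not\subseteq\mm^2$: there is $y\in\mm\setminus\mm^2$ with $y\mm=0$. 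Then $(y)\cong k$ and $y^2=0$. Choosing a subspace $W\subseteq\mm$ that maps onto a complement of $\bar y$ in $\mm/\mm^2$ and setting $\aa=\mm^2+W$, one checks $\aa\cap(y)=0$, $\aa+(y)=\mm$, and $R/\aa\cong k[t]/(t^2)$; a dimension count then identifies $R$ with the fiber product $(R/(y))\times_k k[t]/(t^2)$, equivalently the trivial extension of $R/(y)$ by its residue field. Such a ring is Golod --- this is classical (see, e.g., \cite{Avr98}), and it can also be deduced from Ahangari Maleki's criterion \cite[Lemma 2.1]{AM19} exactly as in the proof of \Cref{prop_dim1_absKos}.

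The main obstacle is really just the structural bookkeeping of the previous paragraph (the socle computation and the resulting fiber-product decomposition); the Golodness of fiber products over a field and the positive answer to \Cref{quest_HIy} for Cohen--Macaulay Golod rings are cited black boxes. I also note that the characteristic zero hypothesis plays no role in this case --- it will enter the proof of \Cref{thm_HIy_gstretched_long} only in the Gorenstein case, through the Elias--Valla structure theorem.
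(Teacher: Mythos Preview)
Your structural bookkeeping is correct and even sharper than the paper's: without invoking Elias--Valla (and hence without the characteristic-zero hypothesis) you produce a socle element $y\in\mm\setminus\mm^2$ and an ideal $\aa$ with $\mm=(y)\oplus\aa$. The paper obtains the same decomposition from \Cref{prop_EliasValla} (with $y=y_2$ and $\aa=(y_1,y_3,\dots,y_h)$), and then finishes by a direct linearity-defect count: from $0\to k\cong R/(0:y)\to R\to R/(y)\to 0$ one gets $\lind_R(y)=\lind_R\mm+1$, while $(y)$ being a summand of $\mm$ forces $\lind_R\mm\ge\lind_R(y)$, so $\lind_R k=\infty$ unless $R$ is Koszul. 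Had you taken this last step, your argument would in fact prove the proposition in arbitrary residue characteristic.

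The genuine gap is the claim that the fiber product $R\cong (R/(y))\times_k k[t]/(t^2)$ is Golod. This is not ``classical''; a fiber product $A\times_k B$ of local $k$-algebras is Golod if and only if both $A$ and $B$ are (this follows from the Dress--Kr\"amer/Lescot formula for Poincar\'e series together with Serre's upper bound), and here $A=R/(y)$ is again stretched of type $r(R)-1$. When $r(R)=2$ and $\mu(\mm)\ge 3$, $A$ is artinian Gorenstein of embedding dimension $\ge 2$, hence not a hypersurface, and a Gorenstein local ring is Golod only if it is a hypersurface. Concretely, take
\[
R=k[[y_1,y_2,y_3]]/(y_1y_2,\,y_1y_3,\,y_2y_3,\,y_2^2,\,y_3^2-y_1^2),
\]
which is artinian, $g$-stretched, non-Gorenstein with $r(R)=2$. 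Then $A=R/(y_2)\cong k[[y_1,y_3]]/(y_1y_3,\,y_3^2-y_1^2)$ is a codimension-two complete intersection, so $P^A_k(t)=(1-t)^{-2}$; the fiber-product formula gives $P^R_k(t)=(1-3t+t^2)^{-1}$, and one checks that $(1+t)^3(1-3t+t^2)=1-5t^2-5t^3+t^5$ has a negative coefficient, so $R$ is not Golod. Thus the appeal to \cite{My19} is unavailable, and the proposed route does not close. The invocation of \cite[Lemma~2.1]{AM19} ``exactly as in the proof of \Cref{prop_dim1_absKos}'' does not help either: there one has $R=S/Q\nn$ with $Q$ generated by a regular sequence, a structure absent in the present situation.
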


A key ingredient is the following structural result on ideals defining stretched local rings.
\begin{prop}[Elias-Valla {\cite[Theorem 3.1]{ElV08}}]
\label{prop_EliasValla}
Let $(S,\nn,k)$ be a regular local ring, $I\subseteq \nn^2$ an ideal such that $R=S/I$ is artinian and $g$-stretched. Assume that $\chara(k)=0$. Let $\mm:=\nn/I, h=\mu(\mm)$, and $\tau=r(R)$. Let $s$ be the socle degree of $R$.
\begin{enumerate}[\quad \rm (1)]
 \item If $\tau < h$, then we can find a minimal generating set $y_1,\ldots,y_h$ of $\nn$ such that $I$ is minimally generated by the elements $\{y_iy_j\}_{1\le i< j\le h}$, $\{y_j^2\}_{2\le j\le \tau}$, and $\{y_i^2-u_iy_1^s\}_{\tau+1\le i\le h}$. Here $u_i$ are units in $S$.
 \item If $\tau = h$, then we can find a minimal generating set $y_1,\ldots,y_h$ of $\nn$ such that $I$ is minimally generated by the elements $\{y_1y_j\}_{2\le  j\le h}$, $\{y_iy_j\}_{2\le i\le j\le h}$, and $y_1^{s+1}$. 
\end{enumerate}
In either case, we have $\mm^i=(y_1^i)$ for all $i\ge 2$.
\end{prop}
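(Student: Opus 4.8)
The plan is to choose a minimal generating set of $\nn$ in two stages, each controlled by a quadratic form that is diagonalizable precisely because $\chara(k)=0$. Throughout I work in $R$, writing $y_i$ for the images of elements of $\nn$; a relation derived in $R$ means the corresponding element of $S$ lies in $I$. Set $V=\mm/\mm^2$, and recall that $R$ being artinian and $g$-stretched means $\mm^2$ is principal. The case $\mm^2=0$ is trivial, so I assume $\mm^2\neq 0$, whence $\dim_k(\mm^2/\mm^3)=1$. The whole normal form is organized around powers of a single linear form $y_1$, so the first task is to produce such a form.

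\textbf{Step 1 (normalizing $\mm^2$).} I would first find $y_1\in\nn\setminus\nn^2$ with $\mm^2=(y_1^2)$. Consider the quadratic map $q\colon V\to\mm^2/\mm^3$, $\bar x\mapsto\overline{x^2}$, valued in the one-dimensional space $\mm^2/\mm^3$. Its polarization is the multiplication $V\times V\to\mm^2/\mm^3$, which is surjective since $\mm^2=\mm\cdot\mm$, hence nonzero; as $\chara(k)\neq 2$, a nonzero polarization forces $q\neq 0$. Thus some $\bar y_1$ satisfies $q(\bar y_1)\neq 0$, i.e. $y_1^2\notin\mm^3$, and Nakayama gives $\mm^2=(y_1^2)$. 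A short induction then yields $\mm^i=(y_1^i)$ for all $i\ge 2$: if $\mm^{i-1}=(y_1^{i-1})$ then $y_jy_1^{i-1}=(y_1y_j)y_1^{i-2}\in(y_1^2)y_1^{i-2}=(y_1^i)$, using $y_1y_j\in\mm^2=(y_1^2)$, so $\mm^i=\mm\cdot\mm^{i-1}=(y_1^i)$. In particular $\mm^i$ is one-dimensional for $2\le i\le s$, $\mm^{s+1}=0$, and $\ell(R)=1+h+(s-1)=h+s$; this also proves the final assertion of the proposition. I expect \emph{this step to be the main obstacle and the only genuinely delicate point}: it is the one place where $\chara(k)=0$ is indispensable, since in characteristic $2$ no linear form need have its square generating $\mm^2$, and then the presentation built around $y_1$ simply fails.

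\textbf{Step 2 (killing mixed products and diagonalizing the socle form).} Extend $y_1$ to a minimal generating set $y_1,\dots,y_h$ of $\nn$. Writing $y_1y_j\in(y_1^2)$ as a polynomial in $y_1$ and replacing each $y_j$ ($j\ge 2$) by $y_j$ minus a suitable polynomial in $y_1$, I would arrange $y_1y_j=0$ for all $j\ge 2$; this alters neither $V$ nor the relation $\mm^2=(y_1^2)$. Now for $i,j\ge 2$ we have $y_iy_j\in\mm^2=(y_1^2)$ and $y_1\cdot y_iy_j=(y_1y_i)y_j=0$; since $(y_1^2)=\langle y_1^2,\dots,y_1^s\rangle$ and multiplication by $y_1$ kills only $y_1^s$, this forces $y_iy_j\in k\,y_1^s$. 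Hence $y_iy_j=\beta_{ij}\,y_1^s$ defines a symmetric bilinear form $\beta$ on $\langle\bar y_2,\dots,\bar y_h\rangle$ with values in $k\,y_1^s\cong k$. Because $\chara(k)=0$, I would diagonalize $\beta$ by a linear change of $y_2,\dots,y_h$ (which preserves $y_1y_j=0$ and $\mm^2=(y_1^2)$), obtaining $y_iy_j=0$ for $2\le i<j\le h$ and $y_i^2=u_iy_1^s$ with $u_i\in k$.

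\textbf{Step 3 (identifying $\tau$ and assembling the presentation).} After reordering so that $u_i=0$ for $2\le i\le\tau'$ and $u_i\neq 0$ for $\tau'<i\le h$, a direct socle computation on the monomial basis $1,y_1,\dots,y_1^s,y_2,\dots,y_h$ shows $(0:_R\mm)=\langle y_1^s,y_2,\dots,y_{\tau'}\rangle$, of dimension $\tau'$; since $r(R)=\dim_k(0:_R\mm)$, this gives $\tau'=\tau$. Lifting the nonzero $u_i$ to units of $S$ then produces exactly the generator lists in (1) when $\beta\neq 0$ (equivalently some $u_i\neq 0$, i.e. $\tau<h$) and in (2) when $\beta=0$ (i.e. $\tau=h$); note that $y_1^{s+1}=0$ is automatic in case (1) via $y_1^{s+1}=u_i^{-1}(y_1y_i)y_i=0$ but must be added as a generator in case (2). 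Finally, to see these elements really generate $I$ and do so minimally, I would let $I'$ be the ideal they generate: the listed relations reduce $S$ to the monomial basis above, so $\ell(S/I')=h+s=\ell(R)$, and combined with $I'\subseteq I$ this forces $I'=I$; minimality is then a direct count of the generators against $\mu(I)=\dim_k I/\nn I$.
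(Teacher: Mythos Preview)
The paper does not supply its own proof of this proposition: it is quoted from Elias--Valla \cite[Theorem 3.1]{ElV08} and used as a black box, so there is no in-paper argument to compare against. Your reconstruction is correct. The three-step plan---produce $y_1$ with $\mm^2=(y_1^2)$ via the quadratic form on $\mm/\mm^2$, kill the products $y_1y_j$ and diagonalize the residual socle form $\beta$, then read off $\tau$ from the socle and verify generation by a length count---is the natural route and each step goes through as you describe.

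Two small technical points are left implicit but cause no trouble. First, your Step~2 replacements (``$y_j$ minus a suitable polynomial in $y_1$'') and the $k$-linear diagonalization tacitly use a coefficient field $k\subseteq R$; this is available because $R$ is artinian, hence complete, and equicharacteristic since $\chara(k)=0$. The resulting minimal generators of $\mm$ then lift to a minimal generating set of $\nn\subseteq S$ because $I\subseteq\nn^2$. Second, the redundancy of $y_1^{s+1}$ in case~(1) is really a containment in $S$, not just a relation in $R$; the identity $u_i\,y_1^{s+1} = y_i\cdot(y_1y_i) - y_1\cdot(y_i^2 - u_iy_1^s)$ exhibits it in the ideal generated by the listed elements. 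With these remarks your length argument $\ell(S/I')\le h+s=\ell(R)$ together with $I'\subseteq I$ cleanly gives $I'=I$, and minimality follows from the linear independence of the images of the generators in $\nn^2/\nn^3$ (plus, in case~(2), the observation that $y_1^{s+1}$ does not lie in the ideal generated by the quadratic relations, as one sees by reducing modulo $(y_2,\dots,y_h)$).
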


\begin{proof}[Proof of Proposition \ref{prop_artin_nonGor_stretched}]
By \Cref{prop.passtocompletion} and \Cref{rem_complete}, we may assume that $R$ is complete. By Cohen's structure theorem, we may write $R=S/I$, where $(S,\nn,k)$ is a complete regular local ring, $I\subseteq \nn^2$.  Let $\tau$ be the type of $R$, $h=\mu(\mm)$ and $s$ the socle degree of $R$. Note that $\tau\ge 2$ since $R$ is not Gorenstein. By Proposition \ref{prop_EliasValla}, there are two cases.
 
 \textsf{Case 1:} $\tau=h$. We can find a minimal generating set $y_1,\ldots,y_h$ of $\nn$ such that $I$ is minimally generated by the elements $\{y_1y_j\}_{2\le  j\le h}$, $\{y_iy_j\}_{2\le i\le j\le h}$, and $y_1^{s+1}$. We claim the followings:
 \begin{enumerate}[\quad \rm (i)]
 \item $\mm=(y_2)\oplus (y_1,y_3,y_4,\ldots,y_h)$,
  \item $(0):y_2=\mm$.
 \end{enumerate}
(i): Take $a\in (y_2)\cap (y_1,y_3,\ldots,y_h)$. Assume that $a\neq 0$. Since $\mm (y_2)=0$, one can assume that $a=a_2y_2$ where $a_2\notin \mm$. But then $y_2$ is not a minimal generator of $\mm$, a contradiction. Hence $\mm=(y_2)\oplus (y_1,y_3,\ldots,y_h)$. 
 
 (ii) This holds since $y_2\mm=0$ and $y_2\neq 0$.
 
 Now we show that if $\lind_R k<\infty$, then $R$ is Koszul. Assume the contrary, that $R$ is not Koszul. Then $\lind_R k \ge 1$ and $\lind_R \mm=\lind_R k-1$ by \Cref{prop_ld_exactseq}(2). From the exact sequence
 \[
0\to k =R/((0):y_2) \to R \to R/(y_2) \to 0
 \]
and \Cref{prop_ld_exactseq}(2), we conclude that $\lind_R R/(y_2)=\lind_R k+1=\lind_R \mm+2$, and so $\lind_R (y_2)=\lind_R R/(y_2)-1=\lind_R \mm+1$, again by \Cref{prop_ld_exactseq}(2). But by (i), $(y_2)$ is a direct summand of $\mm$, so $\lind_R \mm \ge \lind_R (y_2)$. This implies that $\lind_R \mm=\lind_R (y_2)=\infty$, whence $\lind_R k=\infty$ which is a contradiction.

\textsf{Case 2}: $\tau<h$. By Proposition \ref{prop_EliasValla}, we can find a minimal generating set $y_1,\ldots,y_h$ of $\nn$ such that $I$ is minimally generated by the elements $\{y_iy_j\}_{1\le i< j\le h}$, $\{y_j^2\}_{2\le j\le \tau}$, and $\{y_i^2-u_iy_1^s\}_{\tau+1\le i\le h}$, where $u_i$ are units in $S$. Similarly as in Case 1, we have
\begin{enumerate}[\quad \rm (iii)]
 \item $\mm=(y_2)\oplus (y_1,y_3,y_4,\ldots,y_h)$,
  \item[(iv)] $(0):y_2=\mm$.
 \end{enumerate}
 Moreover, we deduce that if $\lind_R k<\infty$, then it is Koszul, in the same manner. This concludes the proof.
\end{proof}
The main work in the proof of \Cref{thm_HIy_gstretched_long} is done by
\begin{thm}
\label{thm_HIy_artinGor}
Let $(R,\mm,k)$ be a $g$-stretched artinian local ring.  Assume that $\chara(k)=0$. If $\lind_R k$ is finite and $R$ is Gorenstein, then $R$ is a Koszul ring.
\end{thm}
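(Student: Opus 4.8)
The plan is to reduce $R$ to an explicit Elias--Valla presentation, exhibit a \emph{weak Koszul filtration} on $R$, and then use the resulting Tor-vanishing short exact sequences to show that $\lind_R k$ can only be $0$ or $\infty$; the finiteness hypothesis then gives $\lind_R k=0$. First, an artinian local ring is complete, so by Cohen's structure theorem we may write $R=S/I$ with $(S,\nn,k)$ a complete regular local ring and $I\subseteq\nn^2$, and $R$ being Gorenstein gives $r(R)=1$. If $\embdim R=1$, then $R\cong k[[x]]/(x^n)$ is a hypersurface, hence a complete intersection, and the conclusion follows from the known positive answer to \Cref{quest_HIy} for complete intersections. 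So we may assume $h:=\embdim R\ge 2$. Since $r(R)=1<h$, \Cref{prop_EliasValla}(1) supplies a minimal generating set $y_1,\dots,y_h$ of $\nn$ such that, writing $\bar y_i$ for the image of $y_i$ in $R$ and $s\ (\ge 2)$ for the socle degree, one has the relations $\bar y_i\bar y_j=0$ for $i\neq j$, $\bar y_i^2=u_i\bar y_1^{\,s}$ for $2\le i\le h$ ($u_i$ a unit), $\bar y_1^{\,s+1}=0$, and $\mm^j=(\bar y_1^{\,j})$ for $j\ge 2$; in particular $\{1\}\cup\{\bar y_i\}_{1\le i\le h}\cup\{\bar y_1^{\,j}\}_{2\le j\le s}$ is a $k$-basis of $R$.

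The technical core is the assertion that
\[
\Fc:=\{(0)\}\cup\bigl\{(\bar y_i : i\in T) : \emptyset\neq T\subseteq\{1,\dots,h\}\bigr\}
\]
is a weak Koszul filtration of $R$. Using the $k$-basis above one computes that $(\bar y_i:i\in T)$ equals $\mm^2+\sum_{i\in T}k\bar y_i$ when $1\in T$ and equals $k\bar y_1^{\,s}+\sum_{i\in T}k\bar y_i$ otherwise, and from this description (WF1) and (WF2) are immediate. For (WF3) the one identity that matters is: if $J=(\bar y_i:i\in S)$ with $\bar y_j\notin J$ and $\bar y_j\mm\subseteq J$, then $J:\bar y_j=\mm$ (as it is a proper ideal containing $\mm$). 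The hypothesis $\bar y_j\mm\subseteq J$ holds whenever $\bar y_1\in J$ (then $\mm^2\subseteq J$ and $\bar y_j\mm=(\bar y_j^2)\subseteq\mm^2$), and also whenever $j\ge2$ and $S$ contains \emph{some} index $\ge2$ (then $\bar y_1^{\,s}\in J$, so $\bar y_j\mm=(\bar y_j^2)=(u_j\bar y_1^{\,s})\subseteq J$). Peeling generators off in this order, together with $(0):\bar y_1=(\bar y_2,\dots,\bar y_h)$ and $(0):\bar y_j=(\bar y_i:i\neq j)$ for $j\ge2$, verifies (WF3). Note that $\Fc$ exists even when $R$ is not Koszul (e.g. $k[[x,y]]/(xy,y^2-x^3)$), which is the surprising point.

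Now \Cref{lem_weakfiltr_Tormap}(2) says that whenever $J\in\Fc$, $x\in\mm$, $J+(x)\in\Fc$ and $J:x=\mm$, the map $k=R/\mm\xrightarrow{\,\cdot x\,}R/J$ is Tor-vanishing; \Cref{prop_ld_exactseq}(3) then applies to $0\to k\to R/J\to R/(J+(x))\to 0$. Set $n:=\lind_R k<\infty$. Peeling $\bar y_h,\dots,\bar y_2$ off $\mm=(\bar y_1,\dots,\bar y_h)$ and using finiteness of $n$ in the inequalities of \Cref{prop_ld_exactseq}(3) gives $\lind_R R/(\bar y_1)=n$; the same peeling yields $\lind_R R/(\bar y_1,\bar y_3,\dots,\bar y_h)=n$ and $\lind_R R/(\bar y_2,\dots,\bar y_h)=\lind_R R/(\bar y_2)$. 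On the other hand $(\bar y_1)\cong R/\bigl((0):\bar y_1\bigr)=R/(\bar y_2,\dots,\bar y_h)$ and $(\bar y_2)\cong R/\bigl((0):\bar y_2\bigr)=R/(\bar y_1,\bar y_3,\dots,\bar y_h)$, so applying \Cref{prop_ld_exactseq}(2) to $0\to(\bar y_i)\to R\to R/(\bar y_i)\to 0$ for $i=1,2$ gives $\lind_R R/(\bar y_2,\dots,\bar y_h)=\max\{0,\lind_R R/(\bar y_1)-1\}$ and $\lind_R R/(\bar y_1,\bar y_3,\dots,\bar y_h)=\max\{0,\lind_R R/(\bar y_2)-1\}$. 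Concatenating all of these,
\[
n=\max\bigl\{0,\ \max\{0,n-1\}-1\bigr\},
\]
and for finite $n$ this forces $n=0$, i.e. $R$ is Koszul.

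The main obstacle is Paragraph two: producing the weak Koszul filtration $\Fc$ and checking (WF1)--(WF3), especially the bookkeeping of which colon ideals equal $\mm$, and the conceptual point that such a filtration exists for all these rings even though they need not be Koszul. Once that is in place, the rest is a mechanical chase through \Cref{lem_weakfiltr_Tormap} and \Cref{prop_ld_exactseq}. (As a byproduct the argument shows that every $g$-stretched artinian Gorenstein ring of embedding dimension at least $2$ has $\lind_R k\in\{0,\infty\}$.)
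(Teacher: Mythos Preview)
Your approach is essentially the same as the paper's: reduce to the complete case, invoke the Elias--Valla presentation, build a weak Koszul filtration, and then use \Cref{lem_weakfiltr_Tormap} together with \Cref{prop_ld_exactseq} to force a contradiction if $\lind_R k\ge 2$. Your filtration is larger (all subsets of $\{\bar y_1,\dots,\bar y_h\}$) than the paper's (only the ``initial'' and ``terminal'' runs), and you dispatch $h=1$ by citing the complete-intersection case rather than by a direct computation; both differences are harmless.

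There is, however, a genuine bookkeeping slip in the final paragraph. The claim that ``the same peeling yields $\lind_R R/(\bar y_2,\dots,\bar y_h)=\lind_R R/(\bar y_2)$'' is not justified for $h\ge 3$. In the first peeling (from $\mm$ down to $(\bar y_1)$) you know the rightmost quotient is $R/\mm$, so you start with $\lind=n$ and propagate. In the chain from $(\bar y_2,\dots,\bar y_h)$ down to $(\bar y_2)$ you know neither endpoint a priori, and the three inequalities of \Cref{prop_ld_exactseq}(3) alone do \emph{not} force $\lind_R R/(\bar y_2,\dots,\bar y_{j-1})=\lind_R R/(\bar y_2,\dots,\bar y_j)$. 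In fact, if $n\ge 2$ and $h\ge 3$, one computes $\lind_R R/(\bar y_2,\dots,\bar y_h)=n-1$ (from your isomorphism with $(\bar y_1)$), and then the peeling sequences together with $\lind_R k=n$ force $\lind_R R/(\bar y_2,\dots,\bar y_{h-1})=n$, $\dots$, $\lind_R R/(\bar y_2)=n$, \emph{not} $n-1$. So the displayed identity $n=\max\{0,\max\{0,n-1\}-1\}$ is correct only when $h=2$.

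The fix is immediate and is exactly what the paper does: first use $(\bar y_1)\cong R/(\bar y_2,\dots,\bar y_h)$ to get $\lind_R R/(\bar y_2,\dots,\bar y_h)=n-1$ (assuming $n\ge 2$, legitimate by \Cref{rem_ldge1}); \emph{then} run the Tor-vanishing peeling sequences $0\to k\to R/(\bar y_{j+1},\dots,\bar y_h)\to R/(\bar y_j,\dots,\bar y_h)\to 0$ to obtain $\lind_R R/(\bar y_h)=n$; finally use $(\bar y_h)\cong R/(\bar y_1,\dots,\bar y_{h-1})$ (your identity $(0):\bar y_j=(\bar y_i:i\neq j)$ with $j=h$) and \Cref{prop_ld_exactseq}(2) to get $\lind_R R/(\bar y_h)=n+1$, the desired contradiction. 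Equivalently, in your notation you obtain $\lind_R R/(\bar y_2)=n$ from the peeling and $\lind_R R/(\bar y_2)=n+1$ from the isomorphism $(\bar y_2)\cong R/(\bar y_1,\bar y_3,\dots,\bar y_h)$, which is the same contradiction. Once you reorder the argument this way, your proof is complete and coincides with the paper's.
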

In order to prove Theorem \ref{thm_HIy_artinGor} we need to use the discussion of weak Koszul filtration.
\begin{proof}[Proof of \Cref{thm_HIy_artinGor}]
Again by \Cref{prop.passtocompletion} and \Cref{rem_complete}, we may assume that $R=S/I$, where $(S,\nn,k)$ is a complete regular local ring, $I\subseteq \nn^2$. Let $\tau$ be the type of $R$, $h=\mu(\mm)$ and $s$ the socle degree of $R$. Note that $\tau=1$ since $R$ is Gorenstein. By abuse of notations, we employ the same notations for an element and its image in the relevant quotient rings.

If $h=1$, then $I=(y_1^{s+1})$ where $y_1$ is a minimal generator of $\nn$. Assume that $R$ is not Koszul, then \Cref{rem_ldge1} implies that $\lind_R \mm=\lind_R (y_1)\ge 2$. From the following equalities in $R$: $(0):y_1  =(y_1^s),$ and $(0):y_1^s = (y_1)$, and \Cref{thm_small_inclusion}, we deduce that $\lind_R (y_1^s) \ge \lind_R (y_1)+1$ and $\lind_R (y_1) \ge \lind_R (y_1^s)+1$. Hence $\lind_R (y_1^s) = \lind_R (y_1)=\infty$. It yields that $\lind_R k=\lind_R\mm+1 =\lind_R(y_1)+1=\infty$ which is impossible. Hence $R$ is Koszul.

Assume that $h\ge 2$. By Proposition \ref{prop_EliasValla}, we can find a minimal generating set $y_1,\ldots,y_h$ of $\nn$ such that $I$ is minimally generated by the elements $\{y_iy_j\}_{1\le i< j\le h}$, and $\{y_i^2-u_iy_1^s\}_{2\le i\le h}$, where $u_i$ are units in $S$.

\textsf{Step 1:} We treat the case $h=2$ first. In this case $\mm=(y_1,y_2)$ and $I=(y_1y_2,y_2^2-u_2y_1^s)$.

We have the following equalities in $R$:
\begin{enumerate}[\quad \rm (a)]
 \item $(y_1):y_2=\mm$.
 \item $(0):y_1=(y_2)$.
 \item $(0):y_2=(y_1)$.
\end{enumerate}
The proofs of (a), (b), (c) are similar to that of (i), (iii), (iv) in Step 2 below, resp., hence we leave the details to the interested reader. In particular, $R$ has a weak Koszul filtration $\Fc=\{(0), \mm, (y_1), (y_2)\}$. The above equalities yield the exact sequences
\begin{displaymath}
 \xymatrix{
0 \ar[r] & \dfrac{R}{\mm} \ar[r]^{\cdot y_2}&\dfrac{R}{(y_1)}  \ar[r] & \dfrac{R}{\mm} \ar[r] &  0 \\
0 \ar[r] & \dfrac{R}{(y_2)} \ar[r]^{\cdot y_1}& R  \ar[r] & \dfrac{R}{(y_1)} \ar[r] &  0 \\
0 \ar[r] & \dfrac{R}{(y_1)} \ar[r]^{\cdot y_2}& R  \ar[r] & \dfrac{R}{(y_2)} \ar[r] &  0
 }
\end{displaymath}
Thanks to \Cref{lem_weakfiltr_Tormap}, each of these exact sequence has the form $0\to M \xlongrightarrow{\phi} P \to N \to 0$, where $\phi$ is Tor-vanishing.  Assume that $\lind_R k<\infty$ but $R$ is not Koszul. Then $\lind_R k\ge 1$.
By \Cref{prop_ld_exactseq}(3) and the first exact sequence,
\begin{align*}
\lind_R k &\le \max\{\lind_R R/(y_1),\lind_R k-1\},\\
\lind_R R/(y_1)  & \le \lind_R k.
\end{align*}
Hence $\lind_R R/(y_1)  = \lind_R k\ge 1$. From the remaining exact sequences and \Cref{prop_ld_exactseq}(2), we get $\lind_R R/(y_2)=\lind_R R/(y_1)-1$ and $\lind_R R/(y_2)=\lind_R R/(y_1)+1$. This is a contradiction as $\lind_R R/(y_1)=\lind_R k<\infty$. Hence if $h=2$, $R$ is Koszul.

\textsf{Step 2:} Next, consider the case $h\ge 3$. Consider the collection $\Fc$ of ideals of $R$ consisting of the following ideals
\begin{gather*}
(0), \mm, (y_1,\ldots,y_i), 1\le i\le h-1, \quad \text{and} \quad (y_j,\ldots,y_h), 2\le j\le h.
\end{gather*}
Note that the ideals of $\Fc$ are generated by subsets of $y_1,\ldots,y_h$, so the conditions (WF1), (WF2) in \Cref{dfn_wKf} are fulfilled. We claim the followings, which imply that $\Fc$ is a weak Koszul filtration for $R$:
\begin{enumerate}[\quad \rm (i)]
 \item $(y_1):y_i=(y_1,y_2,\ldots,y_{i-1}):y_i=\mm$ for every $2\le i\le h$;
  \item $(y_h):y_j=(y_{j+1},y_{j+2},\ldots,y_h):y_j=\mm$ for every $2\le j\le h-1$;
 \item $(0):y_1=(y_2,\ldots,y_h)$;
 \item $(0):y_h=(y_1,y_2,\ldots,y_{h-1})$.
\end{enumerate}

For (i): Since $(y_1):y_i \subseteq (y_1,y_2,\ldots,y_{i-1}):y_i \subseteq \mm$, it suffices to show that $\mm \subseteq (y_1):y_i$. Clearly $y_j \in (y_1):y_i$ for $j\in [h]\setminus \{i\}$, as $y_iy_j=0$. Note that $y_i^2=u_iy_1^s\in (y_1)$, so $y_i \in (y_1):y_i$. Thus $\mm \subseteq (y_1):y_i$, as claimed.

For (ii): Since $(y_h):y_j \subseteq (y_{j+1},y_{j+2},\ldots,y_h):y_j\subseteq \mm$, it suffices to show that $\mm \subseteq (y_h):y_j$. Arguing similarly as for (i), noting that $u_hy_j^2-u_jy_h^2=0$, so $u_hy_j \in (y_h):y_j$, which yields $y_j \in (y_h):y_j$ as $u_h$ is a unit.

For (iii): Clearly $(0):y_1 \supseteq (y_2,\ldots,y_h)$. Take $a\in (0):y_1$. Since $a\in \mm$, we can write $a=y_1a_1+b_1$, $b_1\in (y_2,\ldots,y_h)$. Then $ay_1=a_1y_1^2=0$. If $2<s+1$ then this yields $a_1\in \mm$, which in turn  implies that $a=y_1^2a_2+b_2$, where $a_2\in R, b_2 \in (y_2,\ldots,y_h)$. Continuing this argument, we conclude that $a\in y_1^sa_s+b_s$, $a_s\in R, b_s\in (y_2,\ldots,y_h)$. Now $u_2a\in (u_2y_1^s)+(u_2b_s) \subseteq (y_2,\ldots,y_h)$ and $u_2$ is a unit, so $a\in (y_2,\ldots,y_h)$, as desired.

For (iv): The proof is similar to (iii), noting that $y_h^2=u_hy_1^s\neq 0$,  $y_h^3=u_hy_1^sy_h=0$, and that $u_2y_h^2=u_hy_2^2$.

\textsf{Step 3:} We have seen that for each ideal $(0)\neq I\in \Fc$, there exists an ideal $J\in \Fc$ such that $I=J+(x)$ for some $x\in R$ and $J:x\in \Fc$. In particular, we have an exact sequence
\[
0\to \dfrac{R}{J:x} \xrightarrow{\cdot x} \dfrac{R}{J} \to \dfrac{R}{I} \to 0.
\]
Since $\Fc$ is a weak Koszul filtration for $R$, by Lemma \ref{lem_weakfiltr_Tormap}, the injective map in the last exact sequence is Tor-vanishing.

\textsf{Step 4:} From Step 2 and Step 3, we have the various exact sequences of $R$-modules of the form $0\to M \xrightarrow{\phi} P \to N \to 0$ where $\phi$ is Tor-vanishing. In details, from (i), we have the exact sequences
\begin{displaymath}
\xymatrixcolsep{6.72mm}
\xymatrixrowsep{4.8mm}
\xymatrix{
0  \ar[r] &  \dfrac{R}{\mm} \ar[rr]^{\cdot y_h} && \dfrac{R}{(y_1,\ldots,y_{h-1})} \ar[r] & \dfrac{R}{\mm} \ar[r] & 0\\ 
0  \ar[r] & \dfrac{R}{\mm} \ar[rr]^{\cdot y_{h-1}}&& \dfrac{R}{(y_1,\ldots,y_{h-2})} \ar[r] & \dfrac{R}{(y_1,\ldots,y_{h-1})}  \ar[r] &  0\\
& \cdots  && \cdots  & \cdots & \\
0 \ar[r] & \dfrac{R}{\mm} \ar[rr]^{\cdot y_2}&& \dfrac{R}{(y_1)}  \ar[r] & \dfrac{R}{(y_1,y_2)} \ar[r] &  0.
}
\end{displaymath}
Now since $\Tor^R_i(k,\cdot y_h)=0$ for all $i$, we apply Proposition \ref{prop_ld_exactseq}(3) to the first exact sequence of the above display. This yields
\begin{align*}
\lind_R(R/\mm) &\le \max\{\lind_R R/(y_1,\ldots,y_{h-1}),\lind_R(R/\mm)-1\},\\
\lind_R R/(y_1,\ldots,y_{h-1}) & \le \lind_R (R/\mm).
\end{align*}
Hence $\lind_R R/(y_1,\ldots,y_{h-1}) = \lind_R (R/\mm)$. Arguing similarly, from the second exact sequence we get $\lind_R R/(y_1,\ldots,y_{h-2}) = \lind_R R/(y_1,\ldots,y_{h-1}) = \lind_R (R/\mm)$.
Continuing this way, we get 
\begin{equation}
\label{eq_lind_1}
\lind_R R/(y_1)=\lind_R R/(y_1,y_2)=\cdots=  \lind_R R/(y_1,\ldots,y_{h-1}) = \lind_R (R/\mm).
\end{equation}
Assume that $R$ is not Koszul, then all the numbers in the last display are positive.

From (iii) in Step 2, we get the exact sequence
\[
0 \longrightarrow \dfrac{R}{(y_2,\ldots,y_h)} \xrightarrow{\cdot y_1} R \longrightarrow \dfrac{R}{(y_1)} \longrightarrow 0.
\]
Thanks to Proposition \ref{prop_ld_exactseq}(2) and the fact that $\lind_R R/(y_1)\ge 1$, we get
\[
\lind_R R/(y_2,\ldots,y_h)=\lind_R R/(y_1) -1 =\lind_R (R/\mm)-1.
\]
From (ii) in Step 2, we get the exact sequences
\begin{displaymath}
\xymatrixcolsep{6.72mm}
\xymatrixrowsep{4.8mm}
\xymatrix{
0  \ar[r] &  \dfrac{R}{\mm} \ar[rr]^{\cdot y_2} && \dfrac{R}{(y_3,\ldots,y_h)} \ar[r] & \dfrac{R}{(y_2,\ldots,y_h)} \ar[r] & 0\\ 
0  \ar[r] & \dfrac{R}{\mm} \ar[rr]^{\cdot y_3}&& \dfrac{R}{(y_4,\ldots,y_h)} \ar[r] & \dfrac{R}{(y_3,\ldots,y_h)}  \ar[r] &  0\\
& \cdots  && \cdots  & \cdots & \\
0 \ar[r] & \dfrac{R}{\mm} \ar[rr]^{\cdot y_{h-1}}&& \dfrac{R}{(y_h)}  \ar[r] & \dfrac{R}{(y_{h-1},y_h)} \ar[r] &  0.
}
\end{displaymath}
Again by Step 2, we can apply \Cref{prop_ld_exactseq}(3) to these exact sequences. From the first sequence in the last display, the fact that $\lind_R (R/\mm) =\lind_R R/(y_2,\ldots,y_h)+1$, and \Cref{prop_ld_exactseq}(3), we get $\lind_R (R/\mm)=\lind_R R/(y_3,\ldots,y_h)$. From the remaining sequences of the last display, we get
\[
1\le \lind_R (R/\mm)=\lind_R R/(y_3,\ldots,y_h) =\cdots=\lind_R R/(y_h).
\]
From (iv) in Step 2, we get an exact sequence
\[
0 \longrightarrow \dfrac{R}{(y_1,\ldots,y_{h-1})} \xrightarrow{\cdot y_h} R \longrightarrow \dfrac{R}{(y_h)} \longrightarrow 0.
\]
Thanks to \Cref{prop_ld_exactseq}(2) and the fact that $\lind_R R/(y_h)\ge 1$, this yields
\[
\lind_R(R/\mm) =\lind_R R/(y_h) = \lind_R R/(y_1,\ldots,y_{h-1})+1=\lind_R (R/\mm)+1.
\]
The last equality follows from Equation \eqref{eq_lind_1}. This is in contradiction with $\lind_R k<\infty$. Therefore $R$ is Koszul. The proof of Theorem \ref{thm_HIy_artinGor} is completed.
\end{proof}
We finally get the
\begin{proof}[Proof of \Cref{thm_HIy_gstretched_long}]
(1) $\Longrightarrow$ (3): clear.

(3) $\Longrightarrow$ (2): If $\dim R=1$, $R$ is Koszul by \Cref{thm_Koszul}. If $\dim R=0$, it suffices to combine \Cref{prop_artin_nonGor_stretched} (the non-Gorenstein case) and \Cref{thm_HIy_artinGor} (the Gorenstein case).

(2) $\Longrightarrow$ (1): If $\dim R=1$, $R$ is absolutely Koszul by \Cref{prop_dim1_absKos}. If $\dim R=0$, it suffices to employ \Cref{prop_dim0_absKos}.

(2) $\Longleftrightarrow$ (4): by \Cref{thm_Koszul}. This concludes the proof.
\end{proof}

\section{\Cref{thm_main_HerzogIyengar} in the case $\dim R=1$ and in general}
\label{sect_dim1_case}
Our main result in the present work is
\begin{thm}
\label{thm_HerzogIyengar_AlmostMinMult}
Let $(R,\mm,k)$ be a Cohen--Macaulay local ring of almost minimal multiplicity. Assume that $\chara(k)=0$. If $\lind_R k<\infty$, then $R$ is Koszul.
\end{thm}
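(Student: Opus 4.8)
The plan is to carry out the three-step reduction sketched in \Cref{sect_intro}. First I would pass to the $\mm$-adic completion, which changes neither $\lind_R k$ nor the Koszul property by \Cref{prop.passtocompletion} and \Cref{rem_complete}, and record that $k\supseteq\QQ$ is automatically infinite. If $\dim R\ge 2$, apply \Cref{prop_dim1_reduction} repeatedly: each step produces a Cohen--Macaulay local ring of almost minimal multiplicity of one smaller dimension, preserving $\lind_R k$ and Koszulness, so after finitely many steps we reduce to $\dim R\le 1$.

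When $\dim R=0$, $R$ is artinian of almost minimal multiplicity, so $\ell(R)=\mu(\mm)+2$; as recalled in \Cref{sect_intro} this forces $\mu(\mm^2)=1$ and $\mm^3=0$, and moreover $\mm^2\neq 0$ since $\mm^2=0$ would give $e(R)=\codim(R)+1$. Thus $R$ is a $g$-stretched artinian ring, and the implication (3) $\Rightarrow$ (2) of \Cref{thm_HIy_gstretched_long} says precisely that $\lind_R k<\infty$ forces $R$ to be Koszul.

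The essential case is $\dim R=1$. Since $\depth R=1$ and $k$ is infinite, choose $x\in\mm$ superficial for $\mm$ and $R$-regular (\Cref{rem_hyperplanesection}); superficiality forces $x\notin\mm^2$. Put $\ovl{R}=R/(x)$: then $e(\ovl{R})=e(R)$ by \Cref{rem_hyperplanesection}, and $x\notin\mm^2$ gives $\codim(\ovl{R})=\mu(\mm)-1=\codim(R)$, so $\ovl{R}$ is artinian of almost minimal multiplicity, hence $g$-stretched with $\ovl{\mm}^3=0\neq\ovl{\mm}^2$. Writing $R=S/I$ with $(S,\nn)$ complete regular local and lifting $x$ to $\nn\setminus\nn^2$, we have $\ovl{R}=(S/(x))/\ovl{I}$ with $S/(x)$ regular; applying the Elias--Valla theorem \Cref{prop_EliasValla} to $\ovl{R}$ (legitimate since $\chara k=0$) gives a minimal generating set $\ovl{y_1},\dots,\ovl{y_h}$ of the maximal ideal of $S/(x)$ together with an explicit minimal generating set of $\ovl{I}$, and lifting the $\ovl{y_i}$ to $y_1,\dots,y_h\in\nn$ makes $\mm$ minimally generated by $y_1,\dots,y_h,x$. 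I would then mirror, over $R$ itself, the proofs of \Cref{prop_artin_nonGor_stretched} and \Cref{thm_HIy_artinGor}: from the Elias--Valla relations and the $R$-regularity of $x$, produce a weak Koszul filtration $\ovl{\Fc}$ of $\ovl{R}$ (the proof of \Cref{thm_HIy_artinGor} constructs such a filtration on any artinian Gorenstein $g$-stretched ring, hence on $\ovl{R}$; in the non-Gorenstein case the analogous filtration is read off from the direct-sum decomposition and colon relations in the proof of \Cref{prop_artin_nonGor_stretched}), lift it by the canonical lifting \Cref{lem_weakfiltr_lifting}(a) to a weak Koszul filtration $\Fc=\{(0)\}\cup\{J\subseteq R:(x)\subseteq J,\ J/(x)\in\ovl{\Fc}\}$ of $R$ (the hypothesis $\Ann_R(x)=0$ holds because $x$ is $R$-regular), and run the linearity-defect bookkeeping. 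By \Cref{lem_weakfiltr_Tormap} the filtration $\Fc$ yields short exact sequences $0\to R/(J:y)\xrightarrow{\cdot y}R/J\to R/(J+(y))\to 0$ with Tor-vanishing injections; feeding these, together with $0\to R\xrightarrow{\cdot x}R\to\ovl{R}\to 0$, into \Cref{prop_ld_exactseq}(2),(3) exactly as in the zero-dimensional proofs, and using \Cref{rem_ldge1} to get $\lind_R k\ge 1$ when $R$ is not Koszul, one would reach an impossible identity of the shape $\lind_R(R/\mm)=\lind_R(R/\mm)+1$ unless $R$ is Koszul. The Gorenstein and non-Gorenstein cases split as in dimension zero, and $R$ is Gorenstein iff $\ovl{R}$ is, since type is preserved modulo a regular element.

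The main obstacle is the dimension-one step, and I expect it to be twofold. First, one must check that the explicit Elias--Valla presentation of $\ovl{I}$ over $S/(x)$ lifts to the colon identities in $R$ (for instance $(x,y_1,\dots,y_{i-1}):_R y_i=\mm$ and $(x):_R y_1=(x,y_2,\dots,y_h)$ in the Gorenstein case) needed to apply \Cref{lem_weakfiltr_lifting} and to identify the modules occurring in the short exact sequences; since $I\subseteq S$ is only determined through the quotient $\ovl{I}=(I+(x))/(x)$, these checks must genuinely be made in $R$, and this is exactly where the warning that ``some linearity defect information may be lost in the passage from $R$ to $R/(x)$'' bites: one cannot assume $\lind_{\ovl{R}}k<\infty$, so the weak Koszul filtration has to live on $R$ and all linearity-defect computations have to be performed over $R$. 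Second, because $R$ is no longer artinian, the free module $R$ and the element $x$ genuinely enter the bookkeeping, inserting one extra equality into each chain of identities relative to the artinian proofs; this should be routine once the colon relations are in hand, but it is the place that requires care.
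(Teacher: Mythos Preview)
Your proposal is correct and follows essentially the same route as the paper: completion, \Cref{prop_dim1_reduction} down to $\dim R\le 1$, the $g$-stretched theorem in dimension zero, and in dimension one the Elias--Valla presentation of $\ovl R=R/(x)$ together with the canonical lifting \Cref{lem_weakfiltr_lifting}(a) of the weak Koszul filtration and the same linearity-defect bookkeeping as in \Cref{thm_HIy_artinGor}. Two minor remarks: the paper handles the non-Gorenstein one-dimensional case (its \Cref{prop_dim1_nonGor}) directly via the direct-sum decomposition and a bare-hands check that $R/\mm\xrightarrow{\cdot y_2}R/(x)$ is Tor-vanishing rather than through a formal weak Koszul filtration, and in the Gorenstein case the situation $h=\mu(\ovl\mm)=1$ is treated separately (the filtration used for $h\ge 2$ does not literally apply there); you should flag both, but neither changes the strategy.
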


The main results of this section give the proof of \Cref{thm_HerzogIyengar_AlmostMinMult} in the case $\dim R=1$. To state the main results of this section, we fix the following assumptions. Let $(R,\mm,k)$ be a one dimensional Cohen--Macaulay local ring of almost minimal multiplicity. By \Cref{rem_complete}, we may assume that $R$ is complete so that $R=S/I$ with a regular local ring $(S,\nn)$ and an ideal $I\subset \nn^2$.

With this notation we have the following.
\begin{prop}
\label{prop_dim1_nonGor} 
Let $(S,\nn,k)$ be complete regular local ring, $I\subseteq \nn^2$ an ideal, and $(R,\mm)=(S/I,\nn/I)$. Assume that $(R,\mm,k)$ is a one dimensional Cohen--Macaulay local ring with almost minimal multiplicity, and that $\chara k=0$. Let $x\in \mm$ be a superficial element of $\mm$ with respect to $R$. Assume that $R/(x)$ is {\bf not} a Gorenstein ring, and that $\lind_R k<\infty.$ Then there is an equality $\lind_R k=0$, namely $R$ is a Koszul ring.
\end{prop}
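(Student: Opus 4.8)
The plan is to avoid transferring linearity defect information from the artinian reduction $\ovl{R}:=R/(x)$ back to $R$ --- the passage that can lose information when $\dim R=1$ --- and instead to exploit directly the $R$-module structure of $\mm$, using that $\ovl{R}$ is a stretched \emph{non-Gorenstein} artinian ring. First I would note that, since $x$ is superficial and $\depth R=1$, $x$ is $R$-regular by \Cref{rem_hyperplanesection} and $x\notin\mm^2$ (if $x\in\mm^2$, its image in $\mm/\mm^2$ is zero, hence filter-regular for $\gr_\mm R$, which forces $\gr_\mm R$ and $R$ to be artinian). Thus $\mu(\ovl{\mm})=\mu(\mm)-1$, and by \Cref{rem_hyperplanesection} $\ovl{R}$ is artinian of almost minimal multiplicity; since then $\ell(\ovl{R})=e(\ovl{R})=\mu(\ovl{\mm})+2$, the filtration $\ovl{R}\supseteq\ovl{\mm}\supseteq\ovl{\mm}^2\supseteq\cdots$ by powers of $\ovl{\mm}$ forces $\ovl{\mm}^3=0$ and $\ell(\ovl{\mm}^2)=\mu(\ovl{\mm}^2)=1$, in particular $\ovl{\mm}^2\neq 0$ (equivalently, $\ovl{R}$ is stretched, as recorded in the footnote of \Cref{sect_intro}). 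As $\ovl{R}$ is not Gorenstein, $\dim_k(0:_{\ovl{R}}\ovl{\mm})=r(\ovl{R})\geq 2$, while $\ovl{\mm}^2\subseteq(0:_{\ovl{R}}\ovl{\mm})$ is one-dimensional; hence there is $z\in\mm$ whose image $\ovl{z}$ in $\ovl{R}$ lies in $(0:_{\ovl{R}}\ovl{\mm})\setminus\ovl{\mm}^2$. Such a $\ovl{z}$ is a minimal generator of $\ovl{\mm}$ with $\ovl{z}\,\ovl{\mm}=0$, so $(\ovl{z})\cong k$; completing $\ovl{z}$ to a minimal generating set of $\ovl{\mm}$ and arguing as for claim~(i) in the proof of \Cref{prop_artin_nonGor_stretched}, I get a direct sum decomposition $\ovl{\mm}=(\ovl{z})\oplus W$ of $\ovl{R}$-modules, hence of $R$-modules. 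In particular $\mm/(x)=\ovl{\mm}\cong k\oplus W$ as $R$-modules.

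With this in hand I would run the linearity-defect computation. The exact sequence $0\to (x)\to\mm\to\mm/(x)\to 0$, together with $(x)\cong R$ (as $x$ is $R$-regular) and $\mm/(x)\cong k\oplus W$, gives a short exact sequence $0\to R\to\mm\to k\oplus W\to 0$; applying \Cref{prop_ld_exactseq}(1)(i) with a free submodule yields $\lind_R k\leq\lind_R(k\oplus W)\leq\max\{\lind_R\mm,1\}$. On the other hand, \Cref{prop_ld_exactseq}(2) applied to $0\to\mm\to R\to k\to 0$ gives $\lind_R\mm=\lind_R k-1$ as soon as $\lind_R k\geq 1$. Now if $R$ were not Koszul, then $\lind_R k\geq 2$ by \Cref{rem_ldge1}, so $\lind_R\mm=\lind_R k-1\geq 1$, and the two displays force $\lind_R k\leq\lind_R k-1$; since $\lind_R k<\infty$ this is absurd. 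Therefore $R$ is Koszul, i.e.\ $\lind_R k=0$.

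The step I expect to require the most care is the production of the free $k$-summand of $\ovl{\mm}$: one must first see that almost minimal multiplicity forces $\ovl{R}$ to be stretched with $\ovl{\mm}^3=0$, and then that non-Gorenstein-ness supplies a minimal generator of $\ovl{\mm}$ annihilated by $\ovl{\mm}$. This is precisely what fails when $\ovl{R}$ is Gorenstein --- there $(0:_{\ovl{R}}\ovl{\mm})=\ovl{\mm}^2$ contains no minimal generator of $\ovl{\mm}$ --- which is why the Gorenstein reduction must be handled by the separate, harder argument via weak Koszul filtrations (\Cref{thm_HIy_artinGor}). The hypothesis $\chara k=0$ is inherited from the standing setup but does not seem to be needed for the argument above; alternatively, one could obtain a decomposition $\ovl{\mm}=(\ovl{y_2})\oplus(\ovl{y_1},\ovl{y_3},\dots,\ovl{y_h})$ by lifting to $\mm$ the Elias--Valla generators of \Cref{prop_EliasValla}, which does require $\chara k=0$.
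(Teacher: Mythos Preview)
Your proof is correct and follows a cleaner route than the paper's.  Both arguments hinge on the same structural fact about $\ovl{R}=R/(x)$: in the non-Gorenstein stretched artinian case the maximal ideal $\ovl{\mm}$ splits off a copy of $k$ as a direct summand.  The paper obtains this by invoking the Elias--Valla classification (\Cref{prop_EliasValla}) to produce explicit generators $y_1,\dots,y_h$ with $\ovl{\mm}\,\ovl{y_2}=0$, then runs the contradiction through two exact sequences $(\sharp_1)$ and $(\sharp_2)$ --- one Mayer--Vietoris type sequence coming from the decomposition, and one built from $(x):y_2=\mm$, for which Tor-vanishing must be verified by hand.  You instead extract the $k$-summand directly from the inequality $r(\ovl{R})\ge 2>\dim_k\ovl{\mm}^2$, bypassing Elias--Valla entirely, and then close with the single sequence $0\to (x)\to\mm\to\ovl{\mm}\to 0$ and \Cref{prop_ld_exactseq}(1)(i) using $\pd_R(x)=0$.

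What this buys you: your argument is shorter, avoids the explicit Tor-vanishing check, and, as you correctly observe, is characteristic-free --- the hypothesis $\chara k=0$ enters the paper's proof only through \Cref{prop_EliasValla}.  What the paper's approach buys: its use of the Elias--Valla normal form is uniform with the Gorenstein case (\Cref{thm_dim1_Gor}), where the weak Koszul filtration built from those same generators becomes essential, so the two cases are presented in parallel.  Your final paragraph identifying exactly why the Gorenstein case resists this shortcut (the socle lies entirely in $\ovl{\mm}^2$) is on point.
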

\begin{proof}  Since $R$ has almost minimal multiplicity, so does $\overline{R}$, consequently $\overline{R}$ is a $g$-stretched artinian local ring. Let $\overline{\mm}=\mm/(x), h=\mu(\overline{\mm}), \tau=\text{type}(\overline{R})$ and let $s$ be the socle degree of $\overline{R}$. By the assumption that $R/xR$ is not Gorenstein, we have $\tau\ge 2$.  Consider the following two cases.

\textsf{Case 1:} $\tau=h$. Then, by  Proposition \ref{prop_EliasValla}, there is a minimal generating set $y_1,\ldots,y_h$ of $\ovl{\nn}:=\nn/(x)$ such that $\ovl{I}=(I+(x))/(x)\subseteq \ovl{S}$ is minimally generated by the elements $\{y_1 y_j\}_{2\le j\le h}$, $\{ y_i y_j\}_{2\le i\le j\le h}$, and $y_1^{s+1}$. An argument similar to the proof of Proposition \ref{prop_artin_nonGor_stretched} shows that 
\begin{itemize}
	\item[i)] $\dfrac{\mm}{(x)}=\dfrac{(y_2,x)}{(x)}\oplus \dfrac{(y_1,y_3,\ldots,y_h,x)}{(x)}$, and
	\item [ii)] $(x):y_2=\mm$.
\end{itemize}
In particular, we have
\begin{align*}
(y_2,x)\cap (y_1,y_3,\ldots,y_h,x)&=(x),\\
\mm & =(y_2,x)+(y_1,y_3,\ldots,y_h,x).
\end{align*}
Hence, we have the following exact sequences

$$0\to \frac{R}{(x)}\to \frac{R}{(y_2,x)}\oplus \frac{R}{(y_1,y_3,\ldots,y_h,x)}\to \frac{R}{\mm}\to 0 \ \ \ (\sharp_1)$$
and 
 \begin{displaymath}
	\xymatrix{  0  \ar[r] &  \dfrac{R}{\mm} \ar[r]^{\cdot y_2} & \dfrac{R}{(x)} \ar[r] & \dfrac{R}{(y_2,x)} \ar[r] & 0
	\ \ \  (\sharp_2)}.
\end{displaymath}
By way of contradiction, suppose that $\lind_R k\ge 1$, then by \Cref{rem_ldge1}, we get $\lind_R k\ge 2$.

Since $R$ is a 1-dimensional Cohen--Macaulay local ring and $x$ is regular on $R$, $\pd_R R/(x)=1$. In particular, $\Tor_{i}^R(k,R/(x))=0$ for all $i\ge 2$. On the other hand, we have $k\otimes_R R/(x)\cong k$ and $\Tor_{1}^R(k,R/(x))\cong \dfrac{(x)}{x\mm}$. Thus for $i=0,1$, the multiplication map $\Tor_{i}^R(k,R/\mm)   \xrightarrow{\cdot y_2}  \Tor_{i}^R(k,R/(x))$ is respectively $k \xrightarrow{\cdot y_2} k,$ and  $\mm/\mm^2 \xrightarrow{\cdot y_2} (x)/x\mm,$ which are both zero. (For the second map, note that $\mm y_2\subseteq (x)\cap \mm^2=\mm x$, as $x\in \mm\setminus \mm^2$.) Hence the injective map in the sequence $(\sharp_2)$ is Tor-vanishing. 

Therefore, by Proposition \ref{prop_ld_exactseq} and the fact that $\lind_R (R/\mm)\ge 2 > \lind_R R/(x)$, we conclude that $\lind_R R/(y_2,x)=\lind_R (R/\mm) +1.$ Now from the sequence $(\sharp_1)$ and $\pd_R R/(x)=1$, we have by \Cref{prop_ld_exactseq}(1) that 
\begin{align*}
\lind_R R/(y_2,x)&\le \lind_R \left(\frac{R}{(y_2,x)}\oplus \frac{R}{(y_1,y_3,\ldots,y_h,x)}\right)\\
&\le \max\{\pd_R R/(x)+1, \lind_R R/\mm\}= \max\{2, \lind_R R/\mm\}=\lind_R R/\mm.
\end{align*}
Hence $\lind_R k+1=\lind_R k$ which contradicts the finiteness of $\lind_R k$. Therefore $\lind_R k=0$, as desired.

\textsf{Case 2:} $\tau<h$. Also by Proposition \ref{prop_EliasValla}, we can find a minimal generating set $y_1,\ldots, y_h$ of $\nn$ such that $\ovl{I}\subseteq \ovl{S}$ is minimally generated by the elements $\{y_i y_j\}_{1\le i< j\le h}$, $\{y_j^2\}_{2\le j\le \tau}$, and $\{ y_i^2-u_iy_1^s\}_{2\le i\le h}$, where $u_i$ are units in $\ovl{S}$.	Similarly as in Case
1, we have
\begin{itemize}
	\item[i)] $\dfrac{\mm}{(x)}=\dfrac{(y_2,x)}{(x)}\oplus \dfrac{(y_1,y_3,\ldots,y_h,x)}{(x)}$.
	\item [ii)] $(x):y_2=\mm$.
\end{itemize}

Moreover, we deduce that if $\lind_R k<\infty$, then $R$ is Koszul, in the same manner as in Case 1. The proof is concluded.
\end{proof}

As in the artinian case, the harder part of the one-dimensional case of \Cref{thm_HerzogIyengar_AlmostMinMult} is when $R$ is Gorenstein.
\begin{thm}
\label{thm_dim1_Gor}
Let $(S,\nn,k)$ be complete regular local ring, $I\subseteq \nn^2$ an ideal, and $(R,\mm)=(S/I,\nn/I)$. Assume that $(R,\mm,k)$ is a one dimensional Cohen--Macaulay local ring with almost minimal multiplicity, and that $\chara k=0$. Let $x\in \mm$ be a superficial element of $\mm$ with respect to $R$. Assume that $R/(x)$ is a Gorenstein ring, and that $\lind_R k<\infty.$ Then there is an equality $\lind_R k=0$, namely $R$ is a Koszul ring.
\end{thm}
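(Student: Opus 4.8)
The plan is to imitate the proof of \Cref{thm_HIy_artinGor}: lift a weak Koszul filtration from the artinian reduction $\ovl{R}=R/(x)$ up to $R$, then track the linearity defect through the induced Tor-vanishing short exact sequences. Since $x$ is superficial and $R$ is Cohen--Macaulay of dimension one, $x$ is $R$-regular; and since $R$ has almost minimal multiplicity, $x\notin\mm^2$ (otherwise $\mm^2=(x)$ with $x$ regular, so $R$ would be a DVR by \Cref{prop_1dimCM_gstretched}, contradicting $e(R)=\codim(R)+2\ge 2$). Hence $\pd_R R/(x)=1$, so $\lind_R R/(x)\le 1$. Fixing a lift $\wti{x}\in\nn\setminus\nn^2$ of $x$, we have $\ovl{R}=\ovl{S}/\ovl{I}$ with $\ovl{S}=S/(\wti{x})$ regular and $\ovl{I}\subseteq\ovl{\nn}^2$; moreover $\ovl{R}$ is an artinian $g$-stretched ring of almost minimal multiplicity (\Cref{rem_hyperplanesection}), so $\ovl{\mm}^3=0\neq\ovl{\mm}^2$, and it is Gorenstein. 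Thus the Elias--Valla structure theorem (\Cref{prop_EliasValla}, where $\chara k=0$ is used) provides a minimal generating set $y_1,\ldots,y_h$ of $\ovl{\nn}$, with $h=\mu(\ovl{\mm})=\mu(\mm)-1=\codim(R)$. If $h=1$, then $\codim(R)=1$, so $R$ (being a codimension-one Cohen--Macaulay quotient of the regular local ring $S$, with $\pd_S R=1$) is a hypersurface, in particular a complete intersection, and the conclusion follows from the complete intersection case of \Cref{quest_HIy} (\cite{Se,My19}). So we may assume $h\ge 2$.

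The proof of \Cref{thm_HIy_artinGor} exhibits an explicit weak Koszul filtration $\ovl{\Fc}$ of $\ovl{R}$ built from $y_1,\ldots,y_h$ --- for $h\ge 3$ the ideals $(0)$, $\ovl{\mm}$, $(y_1,\ldots,y_i)$ for $1\le i\le h-1$, and $(y_j,\ldots,y_h)$ for $2\le j\le h$; for $h=2$ the ideals $(0)$, $\ovl{\mm}$, $(y_1)$, $(y_2)$ --- together with the colon identities there labelled (i)--(iv) (resp.\ (a)--(c)). Since $x$ is $R$-regular, \Cref{lem_weakfiltr_lifting}(a) lifts $\ovl{\Fc}$ to a weak Koszul filtration $\Fc$ of $R$, whose ideals are obtained by adjoining $x$ to generators of the ideals of $\ovl{\Fc}$, together with $(x)$ itself; and \Cref{lem_weakfiltr_Tormap} turns each step of $\Fc$ into a short exact sequence $0\to R/(J:z)\xrightarrow{\cdot z}R/J\to R/I\to 0$ whose injection is Tor-vanishing. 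One thus obtains precisely the chains of short exact sequences used in the proof of \Cref{thm_HIy_artinGor}, with one difference: wherever the free module $R$ occurred as the middle term --- namely in the two sequences coming from the colons $(0):y_1$ and $(0):y_h$ --- it is now replaced by $R/(x)$.

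Now suppose $R$ is not Koszul; then $\lind_R k\ge 1$, hence $2\le\lind_R k<\infty$ by \Cref{rem_ldge1}. Running \Cref{prop_ld_exactseq}(3) down the ``left'' chain --- the sequence $0\to k\to R/(y_1,\ldots,y_{h-1},x)\to k\to 0$ followed by $0\to k\to R/(y_1,\ldots,y_{i-1},x)\to R/(y_1,\ldots,y_i,x)\to 0$ for $i=h-1,\ldots,2$ --- exactly as in the artinian case yields $\lind_R R/(y_1,\ldots,y_i,x)=\lind_R k$ for all $1\le i\le h-1$. Feeding $\lind_R R/(y_1,x)=\lind_R k$ and $\lind_R R/(x)\le 1<\lind_R k$ into the Tor-vanishing sequence $0\to R/(y_2,\ldots,y_h,x)\xrightarrow{\cdot y_1}R/(x)\to R/(y_1,x)\to 0$ and applying \Cref{prop_ld_exactseq}(3) in both directions gives $\lind_R R/(y_2,\ldots,y_h,x)=\lind_R k-1$. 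Running \Cref{prop_ld_exactseq}(3) down the ``right'' chain $0\to k\to R/(y_{j+1},\ldots,y_h,x)\to R/(y_j,\ldots,y_h,x)\to 0$ for $j=2,\ldots,h-1$ then yields $\lind_R R/(y_j,\ldots,y_h,x)=\lind_R k$ for $3\le j\le h$, in particular $\lind_R R/(y_h,x)=\lind_R k$. Finally the Tor-vanishing sequence $0\to R/(y_1,\ldots,y_{h-1},x)\xrightarrow{\cdot y_h}R/(x)\to R/(y_h,x)\to 0$ together with \Cref{prop_ld_exactseq}(3) gives $\lind_R R/(y_1,\ldots,y_{h-1},x)\le\max\{\lind_R R/(x),\lind_R R/(y_h,x)-1\}=\lind_R k-1$, contradicting $\lind_R R/(y_1,\ldots,y_{h-1},x)=\lind_R k$. (When $h=2$ the ``right'' chain is empty, and one instead applies \Cref{prop_ld_exactseq}(3) to $0\to R/(y_1,x)\xrightarrow{\cdot y_2}R/(x)\to R/(y_2,x)\to 0$, where $\lind_R R/(y_2,x)=\lind_R k-1$, to get $\lind_R R/(y_1,x)\le\max\{\lind_R R/(x),\lind_R k-2\}\le\lind_R k-1$, again contradicting $\lind_R R/(y_1,x)=\lind_R k$.) Hence $\lind_R k\le 1$, and so $\lind_R k=0$ by \Cref{rem_ldge1}.

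The one genuinely new point compared with the artinian Gorenstein case is that $R/(x)$ is no longer free, so \Cref{prop_ld_exactseq}(2) --- which in the artinian argument supplied the exact relation $\lind_R M=\lind_R N-1$ for the two ``bottom'' sequences --- is no longer available. This is remedied by the observation that regularity of $x$ forces $\pd_R R/(x)=1$, hence $\lind_R R/(x)\le 1$, which is small enough that, once $\lind_R k\ge 2$, the Tor-vanishing inequalities of \Cref{prop_ld_exactseq}(3) still pin down every relevant linearity defect to the same value it takes in the artinian case, and thus reproduce the same numerical contradiction. One also needs the small but necessary point that a superficial element of $R$ lies outside $\mm^2$, so that $\ovl{S}=S/(\wti{x})$ is regular and Elias--Valla applies to $\ovl{R}$; this is the step where the almost-minimal-multiplicity hypothesis re-enters at the level of the artinian reduction.
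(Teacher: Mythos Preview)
Your proof is correct and, for the main case $h\ge 2$, follows exactly the paper's approach: lift the weak Koszul filtration of $\ovl{R}$ from the proof of \Cref{thm_HIy_artinGor} to $R$ via \Cref{lem_weakfiltr_lifting}(a), use \Cref{lem_weakfiltr_Tormap} to get Tor-vanishing on all the induced short exact sequences, and run the same chain of \Cref{prop_ld_exactseq}(3) inequalities, with the sole modification that the two sequences through the free module $R$ now pass through $R/(x)$ --- which is harmless since $\pd_R R/(x)=1<\lind_R k$. Your parenthetical for $h=2$ correctly handles the vacuous ``right chain''.

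The one genuine difference is your treatment of the case $h=1$. The paper argues directly: since $\ovl{R}\cong \ovl{S}/(y^{s+1})$, one has in $R$ the colon relations $(x):y=(x,y^s)$ and $(x):y^s=\mm$, giving two Tor-vanishing short exact sequences (Tor-vanishing checked by hand at $i=0,1$, using $\Tor^R_i(k,R/(x))=0$ for $i\ge 2$); these force $\lind_R R/(x,y^s)=\lind_R k-1$ and simultaneously $\lind_R k=\lind_R R/(x,y^s)-1$, a contradiction. Your route is shorter: from $h=\codim(R)=1$ and $R$ Cohen--Macaulay one gets $\pd_S R=1$, so $R$ is a hypersurface and the conclusion follows from the complete intersection case \cite{Se,My19} already cited in the introduction. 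Your approach buys brevity at the cost of a black-box citation; the paper's approach keeps the argument self-contained and parallel to the $h\ge 2$ case. Both are valid.

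One minor remark: your justification that $x\notin\mm^2$ (``otherwise $\mm^2=(x)$ \ldots'') is correct but elliptical. The implicit argument is that if $x\in\mm^2$ then $\mu(\ovl{\mm})=\mu(\mm)$, so comparing $\ell(\ovl{R})=e(R)=\mu(\mm)+1$ with $1+\mu(\ovl{\mm})+\ell(\ovl{\mm}^2)$ forces $\ovl{\mm}^2=0$, i.e.\ $\mm^2=(x)$; then \Cref{prop_1dimCM_gstretched} gives the contradiction. Alternatively, and more directly, a superficial element in a one-dimensional ring cannot lie in $\mm^2$ because its image in $\gr_\mm R$ must be filter-regular of degree one.
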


\begin{proof}
It is harmless to assume that $R$ is not regular. Assume by way of contradiction that $\lind_R k>0$, then by \Cref{rem_ldge1}, $\lind_R k\ge 2$.

Again let $\overline{\mm}=\mm/(x), h=\mu(\overline{\mm}), \tau=\text{type}(\overline{R})$ and $s$ be socle degree of $\overline{R}$; $s\ge 1$ since $R$ is not regular. Since $\ovl{R}$ is Gorenstein, we get that $\tau=1$. 

\textsf{Case 1:} $h=1$. Since $R=S/I$ and $x\in \mm\setminus \mm^2$, we may regard $x$ as an element of $\nn \setminus \nn^2$. Now $\ovl{S}=S/(x)$ is a regular local ring with a principal maximal ideal $\ovl{\nn}$. Let $y\in \nn\setminus \nn^2$ be such that $\ovl{y}$ be an uniformizer for $\ovl{\nn}$, then $\ovl{I}=(\ovl{y}^{s+1})$, since $s$ is the socle degree of $\ovl{R}$. By abuse of notation, we regard $y$ as an element of $R$; in particular, $\mm=(x,y)$.

Since $\ovl{R}=\ovl{S}/(y^{s+1})$, we have in $\ovl{R}$ the relations $(0):\ovl{y}=(\ovl{y}^s),$ $(0):\ovl{y^s}=(\ovl{y})$. Equivalently, in $R$,
\begin{align*}
(x):y &= (x,y^s),\\
(x):y^s &= (x,y)=\mm.
\end{align*}
Thus there are exact sequences
\begin{align}
0 & \to \dfrac{R}{(x,y^s)} \xrightarrow{\cdot y} \dfrac{R}{(x)} \to \dfrac{R}{\mm} \to 0, \label{eq_ses1}\\
0 & \to \dfrac{R}{\mm} \xrightarrow{\cdot y^s} \dfrac{R}{(x)} \to \dfrac{R}{(x,y^s)} \to 0. \label{eq_ses2}
\end{align}
We prove that for all $i\ge 0$, the injective (multiplication) maps in both exact sequences are Tor-vanishing.

Since $x$ is $R$-regular, $\Tor^R_i(k,R/(x))=0$ for all $i\ge 2$. Thus it suffices to consider $i=0,1$. For \eqref{eq_ses1}, applying $\Tor_0^R(k,-)$, we get
\[
\frac{R}{\mm} \xrightarrow{\cdot y} \frac{R}{\mm}
\]
which is zero. Since $\Tor^R_1(R/\mm,R/L)=L/(\mm L)$ for any $R$-ideal $L$, applying $\Tor_1^R(k,-)$ for \eqref{eq_ses1}, we get
\[
\frac{(x,y^s)}{\mm(x,y^s)} \xrightarrow{\cdot y} \frac{(x)}{\mm x}.
\]
This is again the zero map since $y^{s+1}=0$ in $R$. Hence \eqref{eq_ses1} induces the zero maps $\Tor^R_i(k,\cdot y)$. Similar arguments work for \eqref{eq_ses2}. 

Now we may apply \Cref{prop_ld_exactseq} for these two exact sequences. Since $2\le \lind_R k<\infty$, and $\lind_R (R/(x))\le \pd_R (R/(x))=1$, from \eqref{eq_ses1},
\begin{align*}
\lind_R k & \le \max\{\lind_R R/(x), \lind_R R/(x,y^s)+1\} \\
         &\le  \max\{1, \lind_R R/(x,y^s)+1\}=\lind_R R/(x,y^s)+1,\\
\lind_R R/(x,y^s) & \le \max\{\lind_R R/(x), \lind_R k-1\} \le \max\{1, \lind_R k-1\}=\lind_R k-1.
\end{align*}
Therefore $\lind_R R/(x,y^s)=\lind_R k-1$. Similarly, applying \Cref{prop_ld_exactseq} for \eqref{eq_ses2}, we deduce $\lind_R k= \lind_R R/(x,y^s)-1=\lind_R k-2.$
This contradiction shows that $\lind_R k=0$, i.e. $R$ is Koszul.

\textsf{Case 2:} $h\ge 2$. Then, by Proposition \ref{prop_EliasValla}, we can find a minimal generating set $\ovl{y_1},\ldots, \ovl{y_h}$ of $\ovl{\nn}=\nn/(x)$ such that $\ovl{I}=(I+(x))/(x) \subseteq \ovl{S}$ is minimally generated by the elements $\{\ovl{y_i}\ovl{y_j}\}_{1\le i< j\le h}$, and $\{\ovl{y_i}^2-\ovl{u_i}\,\ovl{y_1}^s\}_{2\le i\le h}$, where $\ovl{u_i}$ are units in $\ovl{S}$. 	
By the proof of \Cref{thm_HIy_artinGor}, we have a weak Koszul filtration for $\overline{R}$ given by
$$(0),\overline{\mm},\{(\bar y_1,\bar y_2,\ldots,\bar y_i)\}_{1\le i\le h-1}, \{(\bar y_j,\ldots,\bar y_h)\}_{2\le j\le h},$$
where $\bar y$ is the image of $y$ in $R/(x)$. Moreover, we have the following relations:
\begin{enumerate}[\quad \rm (a)]
 \item $(\ovl{y_1}):\ovl{y_i}=(\ovl{y_1},\ovl{y_2},\ldots,\ovl{y_{i-1}}):\ovl{y_i}=\ovl{\mm}$ for every $2\le i\le h$;
  \item $(\ovl{y_h}):\ovl{y_j}=(\ovl{y_{j+1}},\ovl{y_{j+2}},\ldots,\ovl{y_h}):\ovl{y_j}=\ovl{\mm}$ for every $2\le j\le h-1$;
 \item $(0):\ovl{y_1}=(\ovl{y_2},\ldots,\ovl{y_h})$;
 \item $(0):\ovl{y_h}=(\ovl{y_1},\ovl{y_2},\ldots,\ovl{y_{h-1}})$.
\end{enumerate}
The weak Koszul filtration lifting \Cref{lem_weakfiltr_lifting} implies that the collection $\Fc$ of ideals  
$$
(0),\mm, (x), (y_1,y_2,\ldots,y_i,x), (y_j,\ldots,y_h,x), \quad \text{where $1\le i\le h-1, 2\le j\le h$},
$$
forms a weak Koszul filtration for $R$. Moreover, the relations (a)--(d) translate to
\begin{enumerate}[\quad \rm (i)]
	\item $(y_1,x):y_i=(y_1,y_2,\ldots,y_{i-1},x):y_i=\mm$ for every $2\le i\le h$;
	\item $(y_h,x):y_j=(y_{j+1},y_{j+2},\ldots,y_h,x):y_j=\mm$ for every $2\le j\le h-1$;
	\item $(x):y_1=(y_2,\ldots,y_h,x)$;
	\item $(x):y_h=(y_1,y_2,\ldots,y_{h-1},x)$.
\end{enumerate}
From (i), we have the exact sequences
\begin{displaymath}
\xymatrixcolsep{6.72mm}
\xymatrixrowsep{4.8mm}
\xymatrix{
0  \ar[r] &  \dfrac{R}{\mm} \ar[rr]^{\cdot y_h} && \dfrac{R}{(y_1,\ldots,y_{h-1},x)} \ar[r] & \dfrac{R}{\mm} \ar[r] & 0 \quad (\text{I.}h)\\ 
0  \ar[r] & \dfrac{R}{\mm} \ar[rr]^{\cdot y_{h-1}}&& \dfrac{R}{(y_1,\ldots,y_{h-2},x)} \ar[r] & \dfrac{R}{(y_1,\ldots,y_{h-1},x)}  \ar[r] &  0 \quad  (\text{I.}(h-1))\\
& \cdots  && \cdots  & \cdots & \\
0 \ar[r] & \dfrac{R}{\mm} \ar[rr]^{\cdot y_2}&& \dfrac{R}{(y_1,x)}  \ar[r] & \dfrac{R}{(y_1,y_2,x)} \ar[r] &  0. \quad (\text{I.}2)
}
\end{displaymath}

From (iii), we get the exact sequence
\[
0 \longrightarrow \dfrac{R}{(y_2,\ldots,y_h,x)} \xrightarrow{\cdot y_1} \dfrac{R}{(x)} \longrightarrow \dfrac{R}{(y_1,x)} \longrightarrow 0. \quad (\text{III})
\]
From (ii), we get the exact sequences
\begin{displaymath}
\xymatrixcolsep{6.72mm}
\xymatrixrowsep{4.8mm}
\xymatrix{
0  \ar[r] &  \dfrac{R}{\mm} \ar[rr]^{\cdot y_2} && \dfrac{R}{(y_3,\ldots,y_h,x)} \ar[r] & \dfrac{R}{(y_2,\ldots,y_h,x)} \ar[r] & 0\quad (\text{II.2})\\ 
0  \ar[r] & \dfrac{R}{\mm} \ar[rr]^{\cdot y_3}&& \dfrac{R}{(y_4,\ldots,y_h,x)} \ar[r] & \dfrac{R}{(y_3,\ldots,y_h,x)}  \ar[r] &  0\quad (\text{II.3})\\
& \cdots  && \cdots  & \cdots & \\
0 \ar[r] & \dfrac{R}{\mm} \ar[rr]^{\cdot y_{h-1}}&& \dfrac{R}{(y_h,x)}  \ar[r] & \dfrac{R}{(y_{h-1},y_h,x)} \ar[r] &  0. \quad (\text{II.}(h-1))
}
\end{displaymath}
From (iv), we get an exact sequence
\[
0 \longrightarrow \dfrac{R}{(y_1,\ldots,y_{h-1},x)} \xrightarrow{\cdot y_h} \frac{R}{(x)} \longrightarrow \dfrac{R}{(y_h,x)} \longrightarrow 0. \quad (\text{IV})
\]
By \Cref{lem_weakfiltr_Tormap}, the injective (multiplication) maps of the exact sequences (I.$i$) (where $2\le i\le h$), (II.$j$) (where $2\le j\le h-1$), (III), (IV), are all Tor-vanishing.

From (I.$h$), \Cref{prop_ld_exactseq} and the fact that $2\le \lind_R (R/\mm) < \infty$, we get
\[
\lind_R R/(y_1,\ldots,y_{h-1},x)=\lind_R (R/\mm).
\]
Similarly, considering the exact sequences (I.$i$) where $2\le i\le h-1$, we deduce
\[
\lind_R (R/\mm)=\lind_R R/(y_1,\ldots,y_{h-1},x)=\cdots=\lind_R R/(y_1,y_2,x)=\lind_R R/(y_1,x).
\]
Now $\lind_R R/(y_1,x)\ge 2$ and $\lind_R R/(x)\le \pd_R R/(x)=1$. Therefore the exact sequence (III) and  \Cref{prop_ld_exactseq} imply that
$$
\lind_R R/(y_2,\ldots, y_h,x) = \lind_R R/(y_1,x)-1=\lind_R (R/\mm)-1.
$$
Arguing similarly, the exact sequences (II.$j$) (where $2\le j\le h-1$) imply that
$$1\le \lind_R(R/\mm) =\lind_R R/(y_3,\ldots, y_h,x) =\cdots= \lind_R R/(y_h,x).$$
Finally, the exact sequence (IV) and  \Cref{prop_ld_exactseq} imply that
\begin{align*}
\lind_R (R/\mm) &= \lind_R R/(y_1,\ldots, y_{h-1},x) \le \max\{\lind_R R/(x),\lind_R R/(y_h,x)-1\}\\
               & = \max\{\lind_R R/(x),\lind_R (R/\mm)-1\} \\
               & \le \max\{1,\lind_R (R/\mm)-1\}=\lind_R (R/\mm)-1.
\end{align*}
This contradiction implies that $\lind_R k=0$, i.e. $R$ is Koszul. The proof is concluded.
\end{proof}

Finally, we present the proof of our main result.

\begin{proof}[Proof of \Cref{thm_HerzogIyengar_AlmostMinMult}]
Again using \Cref{prop.passtocompletion} and \Cref{rem_complete}, we may assume that $R=S/I$, where $(S,\nn,k)$ is a complete regular local ring, $I\subseteq \nn^2$. By \Cref{prop_dim1_reduction}, it suffices to treat the case $\dim R\le 1$.

\textsf{Case 1:} $\dim R=0$. The desired conclusion follows from ``(3) $\Longrightarrow$ (2)" in Theorem \ref{thm_HIy_gstretched_long}.

\textsf{Case 2:} $\dim R=1$. The desired conclusion follows from Proposition \ref{prop_dim1_nonGor} and Theorem \ref{thm_dim1_Gor}. 
\end{proof}

\section{Questions and remarks}

In view of \Cref{thm_HIy_gstretched_long} and its proof, it is natural to ask the following questions.
\begin{quest}
Let $(R,\mm,k)$ be a $g$-stretched, artinian local ring with $\chara(k)=p>0$. Is it true that if $\lind_R k<\infty$, then $R$ is Koszul?
\end{quest}
\begin{quest}
\label{quest_absK}
Let $(R,\mm)$ be an absolutely Koszul local ring such that $\mu(\mm^2)\le 3$. Is it true that $R$ is a Koszul ring?
\end{quest}
If $(R,\mm)$ is a standard graded $k$-algebra with $\mu(\mm^2)\le 3$, thanks to works of D'Al\`i \cite{D} and Ahangari-Maleki and \c{S}ega \cite{AMS20}, we have the remarkable equivalence that $R$ is Koszul if and only if it is absolutely Koszul; see \cite[Main Theorem]{AMS20}.

As mentioned before, Conca, De Negri and Rossi \cite[Proposition 6.1.8]{CDR} showed that if $R$ is Cohen--Macaulay of almost minimal multiplicity, with the type $r(R)\le \codim(R)-1$, then $R$ is Koszul. As a corollary of \Cref{thm_Koszul}, we prove that the converse of their result holds in the graded case.
\begin{cor}
\label{cor_almostminmult_Koszul}
Let $(R,\mm)$ be a standard graded $k$-algebra. Assume that $R$ is Cohen--Macaulay of almost minimal multiplicity, namely $e(R)=\codim(R)+2$. Then the following are equivalent:
\begin{enumerate}[\quad \rm (1)]
\item $R$ is Koszul;
\item $r(R)\le \codim(R)-1$.
\end{enumerate}
\end{cor}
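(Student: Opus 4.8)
The plan is to reduce to the artinian situation by cutting down with a linear system of parameters and then invoke \Cref{thm_Koszul}. The implication (2) $\Longrightarrow$ (1) is exactly \cite[Proposition 6.1.8]{CDR} (quoted in \Cref{sect_dim1_reduction} and in the last section), so only (1) $\Longrightarrow$ (2) needs an argument.

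First I would arrange that $k$ is infinite: replacing $R$ by $R\ot_k k(t)$ (or $R\ot_k \ovl{k}$) changes neither the Koszul property nor Cohen--Macaulayness nor the invariants $e(R)$, $\codim(R)$, $r(R)$, since base change along a field extension is faithfully flat and $\gr_\mm(R)\ot_k k' \cong \gr_{\mm'}(R\ot_k k')$. With $k$ infinite, choose a linear system of parameters $\ell_1,\dots,\ell_d$ of $R$, where $d=\dim R$; since $R$ is Cohen--Macaulay this is an $R$-regular sequence of forms of degree $1$, and $d\le \mu(\mm)$. Set $A=R/(\ell_1,\dots,\ell_d)$, an artinian standard graded $k$-algebra, with maximal ideal $\mm_A$.

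Next I would check that $A$ inherits all the relevant structure. The Koszul complex on $\ell_1,\dots,\ell_d$ is a linear $R$-free resolution of $A$, so $\reg_R A=0$, whence $A$ is Koszul by \cite[Section 3.2, Theorem 2]{CDR} (the same reduction used in the proof of \Cref{lem_artinKoszul}). Modding out a regular sequence of linear forms drops embedding dimension and Krull dimension each by $d$, so $\codim(A)=\mu(\mm_A)=\mu(\mm)-d=\codim(R)$; it multiplies the Hilbert series by $(1-t)^d$, so $e(A)=e(R)=\codim(R)+2=\codim(A)+2$, i.e. $A$ has almost minimal multiplicity; and iterating Rees's change-of-rings isomorphism $\Ext^{n+1}_R(k,R)\cong \Ext^{n}_{R/\ell R}(k,R/\ell R)$ along the regular sequence gives $r(A)=\dim_k\Hom_A(k,A)=\dim_k\Ext^{d}_R(k,R)=r(R)$.

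Finally, an artinian ring of almost minimal multiplicity satisfies $\mm_A^3=0$ and $\mu(\mm_A^2)=1$ (the footnote computation in \Cref{sect_intro}); in particular $A$ is $g$-stretched and $\mm_A^2\neq 0$ since it is not of minimal multiplicity. Applying \Cref{thm_Koszul} to the Koszul $g$-stretched artinian ring $A$, and noting that the exceptional case $\mm_A^2=0$ does not occur, we get $r(A)\le \codim(A)-1$; combining with the equalities above yields $r(R)=r(A)\le\codim(A)-1=\codim(R)-1$, which is (2). The only mildly delicate point is the bookkeeping of invariants under the two reductions (base field extension and quotient by a linear regular sequence): each step is standard, but one must transport the Koszul property, $e$, $\codim$, $r$, and the ``almost minimal'' condition simultaneously, which is what the above does.
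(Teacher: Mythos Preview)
Your proof is correct and follows essentially the same route as the paper: cite \cite[Proposition 6.1.8]{CDR} for (2) $\Longrightarrow$ (1), and for (1) $\Longrightarrow$ (2) extend the base field to an infinite one, mod out by a linear system of parameters to reduce to the artinian case, and then invoke \Cref{thm_Koszul}. The paper is more terse about why the invariants and the Koszul property survive these reductions, while you spell out the bookkeeping (via \cite[Section 3.2, Theorem 2]{CDR} for Koszulness of the quotient and Rees's isomorphism for the type), but the underlying argument is identical.
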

\begin{proof}
For (2) $\Longrightarrow$ (1), we may use the argument of \cite[Proposition 6.1.8]{CDR}. Conversely, assume that $R$ is Koszul. Let $\ovl{k}$ be the algebraic closure of $k$. Passing from $R$ to $R\ot_k \ovl{k}$ does not affect the statement, so we may assume that $k$ is infinite.

Now we reduce to the artinian case. Assume that $\dim(R)\ge 1$. Since $R$ is a Cohen--Macaulay and $k$ is infinite, we may choose a regular $R$-sequence $\bsx=x_1,\ldots,x_{\dim(R)}$ consisting of linear forms. Passing from $R$ to $\ovl{R}=R/(\bsx)R$ does not affect the statement: the Koszul property, the type, and codimension of $R$ are not affected by such an operation, as $\bsx$ is a regular sequence of linear forms. Hence we may assume that $R$ is artinian. That $\ell(R)=\mu(\mm)+2$ implies that $R$ is artinian $g$-stretched. The argument of \Cref{thm_Koszul}, (1) $\Longrightarrow$ (2) goes through in this case, and therefore $r(R)\le \codim(R)-1$, as desired.
\end{proof}

We have no counterexample to the following local version of \Cref{cor_almostminmult_Koszul}.
\begin{quest}
Let $(R,\mm)$ be a Cohen--Macaulay local ring of almost minimal multiplicity. If $R$ is Koszul, does it follow that $r(R)\le \codim(R)-1$?
\end{quest}

\section*{Acknowledgments}

We are grateful to Naoyuki Matsuoka for suggesting the statement of \Cref{thm_Matsuoka_intro}, and Marilina Rossi and Joan Elias for their interest and fruitful discussions related to the content of this paper. Both authors are grateful to the support of the Vietnam Academy of Science and Technology under grant number CTTH00.01/25-26. The second author (HDN) was partially supported by NAFOSTED under the grant number 101.04-2023.30.


\begin{thebibliography}{99}
\bibitem[Ab]{Ab} 
{\sc S.S. Abhyankar}, 
{\it Local rings of high embedding dimension}. 
Amer. J. Math. {\bf 89} (1967), 1073--1077.

\bibitem[AM17]{AM17}
{\sc  R. Ahangari Maleki},
\emph{Linearity defect of the residue field of short local rings}.
J. Algebra Appl. {\bf 16} (2017), no. 9, 1750163, 5 pp.

\bibitem[AM19]{AM19}
{\sc  R. Ahangari Maleki},
\emph{The Golod property of powers of ideals and Koszul ideals}.
J. Pure Appl. Algebra {\bf 223} (2019), no. 2, 605--618.

	
\bibitem[AMS20]{AMS20}
{\sc R. Ahangari Maleki and L. \c{S}ega},
\emph{The absolutely Koszul and Backelin-Roos properties for spaces of quadrics of small codimension}.
J. Algebra {\bf 551} (2020), 232--284.



\bibitem[Avr78]{Avr78}
 {\sc L.L. Avramov}, 
 \emph{Small homomorphisms of local rings}. 
 J. Algebra {\bf 50} (1978), 400--453.
 
\bibitem[Avr98]{Avr98} 
{\sc L.L. Avramov}, 
{\it Infinite free resolutions}, in: \emph{Six Lectures on Commutative Algebra}, Bellaterra, 1996,
Progr. Math., vol. {\bf 166}, Birkhäuser, 1998, pp. 1--118.

\bibitem[AE92]{AE92}
{\sc L.L. Avramov and D. Eisenbud}
\emph{Regularity of modules over a Koszul algebra}.
J. Algebra {\bf 153} (1992), no. 1, 85--90.

\bibitem[AP01]{AP01}
{\sc L.L. Avramov and I. Peeva}
\emph{Finite regularity and Koszul algebras}.
Amer. J. Math. {\bf 123} (2001), no. 2, 275--281.

\bibitem[AIS]{AIS}
{\sc L.L. Avramov, S.B. Iyengar and L.M. \c{S}ega},
\emph{Free resolutions over short local rings}.
J. Lond. Math. Soc. {\bf 78} (2008), no. 2, 459--476.



\bibitem[BH]{BH} 
{\sc W. Bruns and J. Herzog}, 
\emph{Cohen--Macaulay rings. Rev. Ed.}.
 Cambridge University Press, 1996.


\bibitem[BHV]{BHV} 
{\sc W. Bruns, J. Herzog and U. Vetter}, 
\emph{Syzygies and walks}. 
in: ICTP Proceedings ``\emph{Commutative Algebra}", A. Simis, N. V. Trung and G. Valla, Eds., World Scientific (1994), 36--57.


\bibitem[CDR]{CDR}
{\sc A. Conca, E. De Negri and M.E. Rossi},
\emph{Koszul algebra and regularity}.
in \emph{Commutative Algebra: Expository papers dedicated to David Eisenbud on the occasion of his 65th birthday}, I. Peeva (ed.), Springer (2013), 285--315.

\bibitem[CINR]{CINR}
{\sc A. Conca, S.B. Iyengar, H.D. Nguyen and T. R\"omer},
\emph{Absolutely Koszul algebras and the Backelin-Roos property}.
Acta Math. Vietnam. {\bf 40} (2015), 353--374.

\bibitem[CTV]{CTV}
{\sc A. Conca, N. V. Trung and G. Valla}, 
\emph{Koszul property for points in projective space}.
Math. Scand. {\bf 89} (2001), 201--216.

\bibitem[D]{D} 
{\sc A. D'Al\`i}, 
\emph{The Koszul property for spaces of quadrics of codimension three}. 
J. Algebra {\bf 490} (2017) 256--282.


\bibitem[EFS]{EFS} 
{\sc D. Eisenbud, G. Fl\o ystad, and F.-O. Schreyer}, 
{\it Sheaf cohomology and resolutions over the exterior algebra}. 
Trans. Am. Math. Soc., {\bf 355} (2003), 4397--4426.

\bibitem[ElV08]{ElV08} 
{\sc J. Elias and G. Valla}, 
{\it Structure theorems for certain Gorenstein ideals}. 
J. Michigan Math., {\bf 57}(2008), 269--292.

\bibitem[GS96]{GS96}
{\sc D. Grayson and M. Stillman},
\emph{Macaulay2, a software system for research in algebraic geometry}.
Available at {\tt https://www.math.uiuc.edu/Macaulay2}.


\bibitem[HIy]{HIy}
{\sc J. Herzog and S.B. Iyengar},
\emph{Koszul modules}. 
J. Pure Appl. Algebra {\bf 201} (2005), 154--188.

\bibitem[HSV83]{HSV83} 
{\sc J. Herzog, A. Simis, W.V. Vasconcelos}, 
{\it Approximation complexes of blowing-up rings}, II, 
J. Algebra {\bf 82} (1) (1983), 53--83.


\bibitem[Mat]{Mat} 
{\sc H. Matsumura}, 
{\it Commutative Ring Theory}, 
Cambridge University Press, 1986.

\bibitem[My19]{My19}
{\sc J. Myers},
{\it Homological criteria for minimal multiplicity}.
J. Algebra {\bf 523} (2019), 285--310.

\bibitem[Ng15]{Ng15} 
{\sc H.D. Nguyen}, 
\emph{Notes on the linearity defect and applications}. 
 Illinois J. Mathematics, Volume {\bf 59}, Number 3 (2015), 637--662.
 
\bibitem[Ng18]{Ng18} 
{\sc H.D. Nguyen}, 
\emph{The absolutely Koszul property of Veronese subrings and Segre products}.
 J. Pure Appl. Algebra {\bf 222} (2018), no. 8, 2342--2358.
 
 \bibitem[NgV16]{NgV16}
{\sc H.D. Nguyen and T. Vu},
\emph{Linearity defect of edge ideals and Fr\"oberg's theorem}.
J. Algebr. Combin. {\bf 44} (2016), 165--199.

 \bibitem[NgV18]{NgV18}
{\sc H.D. Nguyen and T. Vu},
\emph{On the asymptotic behavior of the linearity defect}.
Nagoya Math. J. {\bf 230} (2018), 35--47.
 
\bibitem[PS]{PS} 
{\sc I. Peeva and M. Stillman}, 
{\it Open problems on syzygies and Hilbert functions}. 
J. Commut. Algebra {\bf 1} (2009), 159--195. 


\bibitem[RV96]{RV96} 
{\sc M.E. Rossi and G. Valla}, 
{\it  A conjecture of J. Sally}.
Comm. Algebra {\bf 24} (13), 4249--4261 (1996).


\bibitem[RV10]{RV10}
{\sc M.E. Rossi and G. Valla},
\emph{Hilbert Functions of Filtered Modules}.
Lect. Notes Unione Mat. Ital., {\bf 9}, Springer-Verlag, Berlin; UMI, Bologna, (2010), xviii+100 pp.


\bibitem[Sal77]{Sal77} 
{\sc J.D. Sally}, 
{\it On the associated graded ring of a local Cohen--Macaulay ring}.
J. Math. Kyoto Univ., {\bf 17}(1977), 19--21.

\bibitem[Sal79]{Sal79} 
{\sc J.D. Sally}, 
{\it Stretched Gorenstein rings}.
J. London Math. Soc., 	Volume 20, Number 2 (1979), 19--26.
	
\bibitem[Sal83]{Sal83} 
{\sc J.D. Sally}, 
{\it Cohen--Macaulay local rings of embedding dimension $e + d-2$}. 
J. Algebra {\bf 83} (1983), 393--408.
	
\bibitem[Se]{Se}
{\sc L.M. \c{S}ega},
{\em On the linearity defect of the residue field}.
J. Algebra {\bf 384} (2013), 276--290.

\bibitem[SH06]{SH06}
{\sc I. Swanson and C. Huneke},
\emph{Integral Closure of Ideals, Rings, and Modules}.
London Math. Soc. Lecture Note Series {\bf 336} (2006), Cambridge University Press.

\bibitem[W97]{W97}
{\sc H.J. Wang},
\emph{On Cohen-Macaulay local rings with embedding dimension $e+d -2$}, 
J. Algebra {\bf 190}, (1997), no.1, 226--240.

\end{thebibliography}
\end{document}